\documentclass[a4paper,11pt]{amsart}

\usepackage{amsmath}
\usepackage{amssymb}
\usepackage{amsthm}
\usepackage{verbatim}
\usepackage[all]{xy}
\usepackage{enumerate}

\usepackage{color}

\usepackage[all]{xy}

\usepackage{graphicx}

\makeatletter
\@namedef{subjclassname@2010}{%
	\textup{2010} Mathematics Subject Classification}
\makeatother

\newtheorem{thm}{Theorem}[section]
\newtheorem{cor}[thm]{Corollary}
\newtheorem{lem}[thm]{Lemma}
\newtheorem{prop}[thm]{Proposition}


\theoremstyle{definition}
\newtheorem{dfn}[thm]{Definition}
\newtheorem{rmk}[thm]{Remark}



\numberwithin{equation}{section}


\numberwithin{equation}{section}
\textwidth 6.4truein
\textheight 8.5truein
\oddsidemargin 0pt
\evensidemargin 0pt
\pagestyle{myheadings}


\newcommand{\cK}{\mathcal{K}}

\newcommand{\cM}{\mathcal{M}}
\newcommand{\cH}{\mathcal{H}}

\newcommand{\id}{\textrm{id}}
\newcommand{\sG}{\mathsf{G}}

\newcommand{\Id}{\textrm{Id}}
\newcommand{\Dom}{\textrm{Dom}}

\newcommand{\cB}{\mathcal{B}}

\newcommand{\cHR}{\mathcal{H}_{\mathbb{R}}}
\newcommand{\cHC}{\mathcal{H}_{\mathbb{C}}}

\newcommand{\cF}{\mathcal{F}}
\newcommand{\sign}{{\rm sign}}
\newcommand{\cR}{\mathcal{R}}
\newcommand{\cE}{\mathcal{E}}

\newcommand{\BMO}{{\rm bmo}}
\newcommand{\BMOa}{{\rm BMO}}
\newcommand{\sfG}{\mathsf{G}}
\newcommand{\cN}{\mathcal{N}}

\newcommand{\cO}{\mathcal{O}}
\newcommand{\dom}{{\rm Dom}}
\newcommand{\cS}{\mathcal{S}}

\newcommand{\cU}{\mathcal{U}}

\newcommand{\cP}{\mathcal{P}}

\begin{document}




\title[Harmonic analysis and BMO-spaces of free Araki-Woods factors]{Harmonic analysis and BMO-spaces of  free Araki-Woods factors}

\author[M. Caspers]{Martijn Caspers}
\address{TU Delft, EWI/DIAM, 
	P.O.Box 5031, 
	2600 GA Delft, 
	The Netherlands}
\email{m.p.t.caspers@tudelft.nl}
 
\date{\today}

\begin{abstract}
We consider semi-group BMO-spaces associated with arbitrary von Neumann algebras and prove interpolation theorems. This extends results by Junge-Mei for the tracial case. We give examples of multipliers on free Araki-Woods algebras and in particular we find  $L_\infty \rightarrow {\rm BMO}$ multipliers.  We also provide $L_p$-bounds for a natural generalization of the Hilbert transform.  
\end{abstract}

\subjclass[2010]{Primary:  47A20, 47A57, 47D07.}

\keywords{BMO-spaces, Markov semi-groups, complex interpolation, non-commutative $L_p$-spaces, free Gaussians, Fourier multipliers.}

\maketitle

\section{Introduction}
Recall that the BMO-norm of a classical integrable function $f: \mathbb{R}^n \rightarrow \mathbb{C}$ is defined as 
\[
\Vert f \Vert_{\BMOa}  =   \sup_{Q \in \mathcal{Q}} \frac{1}{\vert Q \vert} \int_Q   \vert f(s) - \oint_Q f \vert^2  ds,
\]
where $\oint_Q f$ is the average of $f$ over $Q$ and $\mathcal{Q}$ is the set of all cubes in $\mathbb{R}^n$. The importance of the BMO-norm and BMO-spaces lies in the fact that they arise as end-point estimates/spaces for the bounds of linear maps on function spaces on $\mathbb{R}^n$. This includes many singular integral operators, Calder\'on-Zygmund operators and Fourier multipliers. BMO-spaces are by Fefferman-Stein duality \cite{FeffermanStein}    dual  to Hardy spaces and provide optimal bounds for the Hilbert transform. By interpolation BMO-spaces form an effective tool to obtain $L_p$-bounds of multipliers. 

BMO-spaces can also be studied through semi-groups.  Consider for example the heat semi-group $\mathcal{S} := (\Phi_t)_{t \geq 0} := (e^{-t \Delta})_{t \geq 0}$ with Laplacian $\Delta$ acting on $L_\infty(\mathbb{R}^n)$. Then alternatively the BMO-norm may be realized through an equivalent (semi-)norm
\[
\Vert f \Vert_{\BMO_{\mathcal{S}}} = \sup_{t \geq 0}  \Vert \vert \Phi_t(f) \vert^2 - \Phi_t(\vert f \vert^2) \Vert^{\frac{1}{2}}. 
\]
  BMO-spaces associated with more general semi-groups  were first studied in \cite{StroockVar}, \cite{Varopoulos} and much more recently in  \cite{XuanYan1}, \cite{XuanYan2}.   
   See also \cite{Grafakos1}, \cite{Grafakos2}. These concern semi-groups on measure spaces, which from our viewpoint is the commutative situation. 
   
   \vspace{0.3cm}
   
   The development and exploration of structural properties of C$^\ast$-algebras and von Neumannn algebras led to the demand of a thorough development of harmonic analysis on non-commutative spaces. After the founding work by Eymard defining the Fourier algebra of a group \cite{Eymard}, the study of its $L_\infty$-multipliers turned out to have tremendous impact on the structure of operator algebras (see e.g. \cite{NateTaka}).    
      In recent years also  the $L_p$-theory was pursued. Under suitable H\"ormander-Mikhlin type conditions several multiplier theorems were established  for group von Neumann algebras  \cite{JMP}, \cite{CPPR},  \cite{GJP1} and vector valued harmonic analysis \cite{Cadilhac}, \cite{ParcetJFA}. On quantum spaces several surprising multiplier theorems have been achieved \cite{XuCMP}, \cite{RicardJFA}. See also \cite{Xiong}, \cite{GJP2}.    
    These results naturally raise questions about end-point estimates and optimal bounds for multipliers. 
    
    Parallel to this development semi-groups on non-commutative measure spaces have played a more and more important role in recent years. They lead to strong applications in non-commutative potential theory  and quantum probability, see e.g.  \cite{CiprianiSauvageot},   \cite{CFK}. Semi-groups naturally appear in approximation properties of von Neumann algebras \cite{JolissaintMartin}, \cite{CaspersSkalskiII}. Also the approach by Ozawa-Popa \cite{OzawaPopa} and Peterson \cite{Peterson} yields new deformation-rigidity properties of von Neumann algebras through the theory of semi-groups and derivations (see also \cite{Avsec}). 
    
    In \cite{JungeMei} Junge and Mei pursued the theory of non-commutative semi-group BMO-spaces associated with non-commutative measure spaces. They introduce several notions of BMO starting from a Markov semi-group on a tracial von Neumann algebra. Relations between these spaces are studied and interpolation results are obtained. A crucial ingredient of their approach is formed by Markov dilations of semi-groups that allows one to `intertwine' semi-group BMO-spaces with BMO-spaces associated with martingales and derive results from this probabilistic martingale setting. 
    
    The first aim of this paper is the study of BMO-spaces associated with an arbitrary $\sigma$-finite von Neumann algebra. We take the natural definition using  a faithful normal state which is not necessarily tracial anymore as a starting point. We extend  interpolation results from \cite[Theorem 5.2]{JungeMei} to the arbitrary setting under a modularity assumption on the Markov semi-group. The modularity assumption is necessary to carry out our proof through Haagerup's reduction method and due to the fact that the probabilistic martingale BMO-spaces in \cite{JungePerrin} are studied (in principle only) in the tracial setting. This culminates in Theorem \ref{Thm=InterpolationBMO}, which briefly states the following.
      Let $\cS$ be a modular Markov semi-group admitting a reversed Markov dilation with a.u. continuous path on a $\sigma$-finite von Neumann algebra $\cM$.  We have  
    \begin{equation}\label{Eqn=ThmA}
    [\BMO^\circ_{\mathcal{S}}(\cM), L_p^\circ(\cM)]_{1/q} \approx_{pq} L_{pq}^\circ(\cM).
    \end{equation}
         Other interpolation theorems for Poisson semi-groups and different BMO-spaces are then discussed in Section \ref{Sect=OtherBMO}. Proofs here are similar  and some aspects in fact simplify. 
     
     In Section \ref{Sect=Hilbert} we give examples of multiplier theorems of non-tracial von Neumann algebras, namely free Araki-Woods factors (see \cite{Shlyakhtenko}). The first part of Section \ref{Sect=Hilbert} introduces a natural generalization of the (free) Hilbert transform. We get $L_p$-bounds through Cotlar's trick. Recently in  \cite{MeiRicard} Mei and Ricard obtained the analogous result for free group factors. We also give examples of $L_\infty \rightarrow {\rm BMO}$ multipliers  and show that the interpolation result of \eqref{Eqn=ThmA} applies. We leave it as an open question whether the Hilbert transform admits a $L_\infty \rightarrow {\rm BMO}$-estimate (or even a ${\rm BMO} \rightarrow {\rm BMO}$-estimate as for the classical Hilbert transform \cite{FeffermanStein}, \cite{Grafakos2}). In Section \ref{Sect=Dilation} we construct a reversed Markov dilation for the semi-groups that we use on free Araki-Woods factors. The construction is essentially due to Ricard \cite{RicardDilation} which is combined with an ultraproduct argument to go from the discrete to continuous case.

    \vspace{0.3cm}

\section{Preliminaries and notation}
We start with some general conventions. For general operator theory we refer to \cite{TakesakiI} and for operator spaces to \cite{EffrosRuan}, \cite{Pisier}.
Throughout the paper $\cM$ will be a von Neumann algebra with fixed normal faithful state $\varphi$. $\mathcal{S} = (\Phi_t)_{t \geq 0}$ will be a fixed Markov semi-group, see Section \ref{Sect=Semigroup} for details. $(\sigma_s^{\varphi})_{s \in \mathbb{R}}$ denotes the modular automorphism group of $\varphi$, see \cite{TakII} for modular theory.

\subsection{General notation} 
For the complex interpolation method we refer to the book  \cite{BerghLofstrom}. See also   \cite{CaspersLpf} for a short summary   and the relation to non-commutative $L_p$-spaces. Let $S$ be the strip of all complex numbers with imaginary part in the interval $[0,1]$. For a compatible couple of Banach spaces $(X,Y)$  denote $\mathcal{F}(X,Y)$ for the space of functions $S \rightarrow X+Y$ that (i) are continuous on $S$  and analytic on the interior of $S$, (ii) $f(s) \in X$ and $f(i + s) \in Y$, (iii) $\Vert f(s) \Vert_X \rightarrow 0$ and  $\Vert f(i+s) \Vert_Y \rightarrow 0$  as $\vert s \vert \rightarrow \infty$. We write $(X,Y)_\theta$ for the interpolation space at parameter $\theta \in [0,1]$.

\subsection{$L_p$-spaces associated with an arbitrary von Neumann algebra}\label{Sect=Lp}
This  paper establishes results on interpolation and harmonic analysis on non-tracial von Neumann algebras. The $L_p$-spaces of such von Neumann algebras can be described through constructions introduced by Haagerup \cite{HaagerupLp}, \cite{TerpI} and Connes-Hilsum \cite{Connes}, \cite{Hilsum} (the latter in fact relies on Haagerup's construction to treat sums and products of unbounded operators). In principle we use the definition of Hilsum \cite{Hilsum}, though it is easy to recast each of the statements in terms of \cite{HaagerupLp}. 

For a general von Neumann algebra $\cM$ we let $\phi'$ be a fixed normal, semi-finite, faithful  weight on the commutant $\cM'$. For a normal, semi-finite weight $\varphi$ on $\cM$ we write $D_\varphi$ for Connes's spatial derivative $d\varphi/d\phi'$  \cite{Connes}, \cite{TerpI}. For every von Neumann algebra in this paper $\phi'$ is implicitly fixed; it can be chosen arbitrary and  $\phi'$ will be suppressed in the notation. 
 $L_p(\cM)$ with $\cM \subseteq B(\cH)$ is defined as all closed densely defined operators $x$ on $\cH$ such that $\vert x \vert^p = D_{\varphi}$ for some $\varphi \in \cM_\ast^+$. Then $\Vert x \Vert_p = \Vert \varphi \Vert^{1/p}$. Products and sums of elements in (different) $L_p$-spaces are understood as strong products and strong sums (so closure of the product and sum). We will omit these closures in the notation. $L_p$-spaces satisfy classical properties as H\"older estimates.  In particular for all $x \in \cM$ and $\varphi \in \cM_\ast$ positive we have  $D_{\varphi}^{\frac{1}{2p}} x D_{\varphi}^{\frac{1}{2p}} \in L_p(\cM)$. In fact such elements are (norm) dense in $L_p(\cM)$ for $1 \leq p < \infty$. 

We turn $L_p(\cM), 1 \leq p \leq \infty$  into a compatible couple (or compatible scale) of Banach spaces. Assume $\cM$ is $\sigma$-finite, meaning that there exists a faithful, normal state $\varphi$ on $\cM$. Then there is a contractive embedding $\kappa_p^{\varphi}: L_{p}(\cM) \rightarrow L_{1}(\cM)$ determined by 
\[
D_{\varphi}^{\frac{1}{2p}} x D_{\varphi}^{\frac{1}{2p}} \mapsto D_{\varphi}^{\frac{1}{2}} x D_{\varphi}^{\frac{1}{2}}.
\]
Considering $L_p(\cM)$  as (non-isometric) linear subspaces of $L_1(\cM)$ we may and will interpret intersections, sum spaces and interpolation spaces of $L_p(\cM)$  and $L_r(\cM)$ within $L_1(\cM)$. Such spaces depend on $\varphi$ and we will usually mark $\varphi$ in the notation (we shall need a transition between the tracial and non-tracial case). For example $[L_p(\cM), L_r(\cM)]_\theta^{\varphi}$ will denote the complex interpolation spaces between $L_p(\cM)$ and $L_r(\cM)$ at parameter $\theta \in [0,1]$ with respect to the embeddings of $L_p(\cM)$ and $L_r(\cM)$ in $L_1(\cM)$ through $\kappa_p^{\varphi}$ and $\kappa_r^{\varphi}$.




\subsection{Semi-groups}\label{Sect=Semigroup}
We recall preliminaries on semi-groups.

\begin{dfn}   
A map $\Phi: \cM \rightarrow \cM$ is called Markov if it is normal ucp (unital completely positive) and $\varphi \circ \Phi = \varphi$ (where $\varphi$ is the fixed faithful normal state on $\cM$). Through complex interpolation between $\cM$ and $L_1(\cM)$, a Markov map has a contractive $L_2$-implementation given by
\[
\Phi^{(2)}: D_{\varphi}^{\frac{1}{4}} x D_{\varphi}^{\frac{1}{4}} \rightarrow D_{\varphi}^{\frac{1}{4}} \Phi(x) D_{\varphi}^{\frac{1}{4}}.
\]
A Markov map is called {\it KMS-symmetric} if $\Phi^{(2)}$ is self-adjoint. A Markov map is called {\it GNS-symmetric} if $\varphi(\Phi(x)^\ast y) = \varphi(x^\ast \Phi(y))$ for all $x,y \in \cM$. 
 $\Phi$ is called $\varphi$-modular if for every $s \in \mathbb{R}$ we have $\Phi \circ \sigma^{\varphi}_s = \sigma^{\varphi}_s \circ \Phi$.
\end{dfn}

If $\Phi$ is $\varphi$-modular then it is KMS-symmetric if and only if it is GNS-symmetric. 

\begin{dfn}
A family $(\Phi_t)_{t \geq 0}$ is called a semi-group if $\Phi_{s+t} = \Phi_s \circ \Phi_t$ and for every $x \in \cM$ we have $\Phi_t(x) \rightarrow x$ in the strong topology as $t \searrow 0$.  A semi-group $(\Phi_t)_{t \geq 0 }$ is called {\it Markov}, {\it KMS-symmetric} or $\varphi$-{\it modular} if for each $t \geq 0$ the map $\Phi_t$ is respectively Markov, KMS-symmetric or $\varphi$-modular.
\end{dfn}

 By interpolation between $L_1$ and $L_\infty$ we may in fact define $\Phi_t^{(p)}$ as  (the closure of)
\begin{equation}\label{Eqn=PhiLp}
\Phi_t^{(p)}: L_p(\cM) \rightarrow L_p(\cM): D_{\varphi}^{\frac{1}{2p}} x D_{\varphi}^{\frac{1}{2p}}  \mapsto  D_{\varphi}^{\frac{1}{2p}} \Phi_t(x) D_{\varphi}^{\frac{1}{2p}},
\end{equation} 
see \cite[Lemma 7.1]{JungeXuJAMS}. 
If $\Phi_t$ is $\varphi$-modular then for $x$ analytic,
\begin{equation}\label{Eqn=KMSSym}
\begin{split}
& \Phi^{(2)}_t  (x D_{\varphi}^{\frac{1}{2}}) = \Phi^{(2)}_t  ( D_{\varphi}^{\frac{1}{4}}   \sigma_{i/4}^\varphi(x) D_{\varphi}^{\frac{1}{4}}) =    D_{\varphi}^{\frac{1}{4}}  \Phi_t( \sigma_{i/4}^\varphi(x)) D_{\varphi}^{\frac{1}{4}}\\
 =& D_{\varphi}^{\frac{1}{4}}   \sigma_{i/4}^\varphi(\Phi_t(x)) D_{\varphi}^{\frac{1}{4}}
=  \Phi_t(x) D_{\varphi}^{\frac{1}{2}}.
\end{split}
\end{equation}

For $1 \leq p < \infty$ let $A_p \geq 0$ be the unbounded generator of our Markov semi-group, which may be characterized by
\[
\dom(A_p) = \{ \xi \in L_p(\cM) \mid \lim_{t \searrow 0 } t^{-1}( \Phi_t^{(p)}(\xi) - \xi) \textrm{ exists} \}
\]
and for  $\xi \in \dom(A_p),  A_p \xi = \lim_{t \searrow 0 } t^{-1}(\xi - \Phi_t^{(p)}(\xi)  )$.
We have $\exp(-t A_p) = \Phi_t^{(p)}$. We also set,
\[
L_p^\circ( \cM  )  = \left\{ \xi \in L_p(\cM) \mid \lim_{t \rightarrow \infty}  \Phi_t^{(p)}(\xi) = 0   \right\}.
\] 
Note that as $\varphi$ is a normal faithful state, we have an inclusion 
\begin{equation}\label{Eqn=InclusionLp}
 \kappa^{\varphi}_{r,p} :=   (\kappa^{\varphi}_r)^{-1} \circ \kappa^{\varphi}_p: L^p(\cM) \subseteq L^r(\cM): D_{\varphi}^{\frac{1}{2p}}  x  D_{\varphi}^{\frac{1}{2p}} \mapsto D_{\varphi}^{\frac{1}{2r}}  x  D_{\varphi}^{\frac{1}{2r}}, \qquad x \in \cM,
\end{equation}
 whenever  $r \leq p$ and this inclusion is a contractive mapping that intertwines $\Phi_t^{(p)}$ and $\Phi_t^{(r)}$. It follows therefore that $\Dom(A_p) \subseteq \Dom(A_r)$. 
We also set, 
\[
\cM^\circ = \left\{ x \in \cM  \mid \Phi_t(x) \rightarrow 0 \quad \sigma{\rm -weakly}  \right\}.
\]
And for notational convenience $L_\infty^\circ(\cM) = \cM^\circ$.

\begin{lem}\label{Lem=NotEmbedding}
	For $1 \leq r \leq p \leq \infty$ we have  $L_p^\circ(\cM) \subseteq L_r^\circ(\cM)$ for the inclusion \eqref{Eqn=InclusionLp}.
\end{lem}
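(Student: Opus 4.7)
I plan to split the argument at $p<\infty$ versus $p=\infty$.

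For $1\le r\le p<\infty$ the result is a short verification. By \eqref{Eqn=PhiLp} the embedding $\kappa^\varphi_{r,p}$ is contractive and intertwines the semigroup implementations (one checks this on the dense set $D_\varphi^{1/2p}xD_\varphi^{1/2p}$, $x\in\cM$, and extends by density). Hence, for $\xi\in L_p^\circ(\cM)$,
\[
\|\Phi_t^{(r)}(\kappa^\varphi_{r,p}(\xi))\|_r \;=\; \|\kappa^\varphi_{r,p}(\Phi_t^{(p)}(\xi))\|_r \;\le\; \|\Phi_t^{(p)}(\xi)\|_p \;\longrightarrow\; 0,
\]
so $\kappa^\varphi_{r,p}(\xi)\in L_r^\circ(\cM)$.

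The real content is $p=\infty$, where the hypothesis $x\in\cM^\circ$ provides only $\sigma$-weak convergence $\Phi_t(x)\to 0$. I would reduce first to $r=2$: set $\eta=\kappa^\varphi_{2,\infty}(x)=D_\varphi^{1/4}xD_\varphi^{1/4}$ and pair $\Phi_t^{(2)}(\eta)=D_\varphi^{1/4}\Phi_t(x)D_\varphi^{1/4}$ against the dense family $\zeta=D_\varphi^{1/4}yD_\varphi^{1/4}$, $y\in\cM$. By cyclicity of the Haagerup trace this inner product equals $\omega_y(\Phi_t(x))$, where $\omega_y(a)=\mathrm{tr}(D_\varphi^{1/2}y^*D_\varphi^{1/2}a)\in\cM_*$, so the $\sigma$-weak hypothesis forces the pairing to zero. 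Combined with the uniform bound $\|\Phi_t^{(2)}(\eta)\|_2\le\|\eta\|_2$ this gives $\Phi_t^{(2)}(\eta)\rightharpoonup 0$ weakly in $L_2(\cM)$. To upgrade weak to norm convergence I would invoke the spectral theorem for the generator $A_2$ of the $L_2$-implementation (self-adjoint under the paper's standing symmetry assumptions on $\cS$): one has the strong limit $\Phi_t^{(2)}(\eta)\to P_0\eta$, with $P_0$ the projection onto $\ker A_2$, and matching the two limits forces $P_0\eta=0$, whence $\eta\in L_2^\circ(\cM)$.

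The remaining sub-cases for $p=\infty$ are bootstrapped from $r=2$. For $1\le r\le 2$, factor $\kappa^\varphi_{r,\infty}=\kappa^\varphi_{r,2}\circ\kappa^\varphi_{2,\infty}$ and use contractivity of $\kappa^\varphi_{r,2}$ exactly as in the easy case. For $2<r<\infty$ I would apply the Kosaki complex interpolation identity $L_r(\cM)=[\cM,L_1(\cM)]_{1/r}$ for the symmetric embedding to the $\cM$-element $\Phi_t(x)$, yielding
\[
\|\kappa^\varphi_{r,\infty}(\Phi_t(x))\|_r \;\le\; \|\Phi_t(x)\|_\infty^{1-1/r}\,\|\kappa^\varphi_{1,\infty}(\Phi_t(x))\|_1^{1/r},
\]
where the first factor is bounded uniformly by $\|x\|_\infty$ and the second tends to $0$ by the $r=1$ instance already proved. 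I expect the weak-to-norm step at $r=2$ to be the genuine obstacle, since it is precisely what bridges the $\sigma$-weak topology in the definition of $\cM^\circ$ and the norm topology in the definition of $L_r^\circ(\cM)$; the remaining sub-cases are then standard transport along the scale.
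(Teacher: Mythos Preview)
Your argument is correct. For $p<\infty$ it coincides with the paper's proof verbatim. For $p=\infty$ the two proofs diverge.

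The paper handles $p=\infty$ in one stroke: it asserts that $y\in\cM^\circ$ gives $\Phi_t(y)\to 0$ strongly, and then invokes \cite[Lemma 1.3]{JungeSherman} to conclude $D_\varphi^{1/2r}\Phi_t(y)D_\varphi^{1/2r}\to 0$ in $L_r$ directly for every $r$. Your route is more granular and more self-contained: you first treat $r=2$ by pairing to get weak $L_2$-convergence and then upgrading to norm convergence via the spectral theorem for the self-adjoint generator $A_2$; you then propagate to $r<2$ by the contractive inclusion and to $r>2$ by the Kosaki interpolation inequality. Your weak-to-norm step at $r=2$ is precisely the content of Lemma~\ref{Lem=SemiGroupTechnicalThing}, which the paper states and proves later; you have in effect anticipated it here. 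Both arguments rely on KMS-symmetry for this upgrade (you flag this explicitly; the paper's assertion of strong convergence from the $\sigma$-weak hypothesis presumably uses it as well), so neither proof is strictly more general than the other. What your approach buys is that the passage from $\sigma$-weak to norm is made visible rather than hidden in an external reference; what the paper's approach buys is brevity and a uniform treatment of all $r$ at once.
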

\begin{proof} 
	Assume $p \not = \infty$. Take  $y \in L_p^\circ(\cM)$ then $\Phi_t^{(p)}(y) \rightarrow 0$. So  $\Phi_t^{(r)} ( \kappa^{\varphi}_{r,p} (  y)) =  \kappa^{\varphi}_{r,p} ( \Phi_t^{(p)} ( y)) \rightarrow 0$ which is equivalent to $\kappa^{\varphi}_{r,p}(y) \in L_r^\circ(\cM)$. Assume $p = \infty$. Take  $y \in \cM^\circ$ so that $\Phi_t^{(p)}(y) \rightarrow 0$   strongly. Then $\Phi_t^{(p)} ( \kappa^{\varphi}_{p,\infty} (  y ) ) = D_{\varphi}^{\frac{1}{2p}} \Phi_t(y) D_{\varphi}^{\frac{1}{2p}} \rightarrow 0$ by \cite[Lemma 1.3]{JungeSherman}.
\end{proof}

\begin{rmk}
 Suppose that the state $\varphi$ is almost periodic, meaning that its modular operator $\nabla_\varphi$ has a complete set of eigenspaces. In this case there is the following averaging trick in order to assure the existence of $\varphi$-modular semi-groups (see e.g. \cite[Theorem 4.15]{OkayasuTomatsu} for a similar argument). 
  By \cite[Lemma 3.7.3]{ConnesENS}   there exists a compact group $\widehat{\Gamma}$ with group homomorphism $\rho: \mathbb{R} \rightarrow \widehat{\Gamma}$ with dense range and   a continuous unitary representation $s \mapsto U_s, s \in \widehat{\Gamma}$ on $B(L_2(\cM))$ such that for $t \in \mathbb{R}$ we get $\nabla_{\varphi}^{it} = U_{\rho(t)}$.
  Let $\Phi$ be a Markov map on $\cM$. Then the map
 \[
 \Phi^{av} =   \int_{\widehat{\Gamma}} {\rm ad}(U_s^\ast) \circ  \Phi  \circ {\rm ad}(U_s) ds
 \] 
 is also Markov. 
 Moreover, it is $\varphi$-modular as
 \[
 \begin{split}
  & \Phi^{av} \circ \sigma_t^{\varphi}
 = \Phi^{av} \circ {\rm ad} (\nabla^{it}_{\varphi})
 = \int_{\widehat{\Gamma}} {\rm ad}(U_s^\ast) \circ  \Phi  \circ {\rm ad}(U_{s+\rho(t)})  ds \\
 = & \int_{\widehat{\Gamma}} {\rm ad}(U_{s}^\ast U_{\rho(t)}  ) \circ  \Phi  \circ {\rm ad}(U_{s })  ds
 = \sigma_t^{\varphi} \circ \Phi^{av}. 
 \end{split}
 \]   
 Similarly, if $(\Phi_t)_{t \geq 0}$ is a Markov semi-group then $(\Phi_t^{av})_{t \geq 0}$ is a Markov semi-group that is moreover $\varphi$-modular.  
\end{rmk}

\subsection{Markov dilations of semi-groups} The following terminology was introduced in  \cite{JungeMei} (see also \cite{Anan} and \cite{RicardDilation}). It forms the crucial condition that is being used  in Junge and Mei their proofs of interpolation results.

\begin{dfn}
A standard Markov dilation of a semi-group $\mathcal{S} = (\Phi_t)_{t\geq 0}$ on a von Neumann algebra $\cM$ with normal faithful state $\varphi$ consists of: (1) A von Neumann algebra $\cN$ with faithful normal state $\varphi_\cN$, (2) an increasing filtration $(\cN_s)_{s \geq 0}$ with $\varphi_\cN$-preserving conditional expectations $\cE_s: \cN \rightarrow \cN_s$, (3) state preserving $\ast$-homomorphisms $\pi_s: \cM \rightarrow \cN_s$ such that
\begin{equation}\label{Eqn=MarkovDilationRelation}
\cE_{s} (\pi_t(x))   = \pi_s(  \Phi_{t-s}(x)  ), \qquad s<t, x \in \cM.
\end{equation}

\end{dfn}

\begin{dfn}
A   reversed   Markov dilation of a semi-group $\mathcal{S} = (\Phi_t)_{t\geq 0}$ on a von Neumann algebra $\cM$ with normal faithful state $\varphi$ consists of: (1) A von Neumann algebra $\cN$ with faithful normal state $\varphi_\cN$, (2) a decreasing filtration $(\cN_s)_{s \geq 0}$ with $\varphi_\cN$-preserving conditional expectations $\cE_s: \cN \rightarrow \cN_s$, (3) state preserving $\ast$-homomorphisms $\pi_s: \cM \rightarrow \cN_s$ such that
\begin{equation}\label{Eqn=RevMarkovDilationRelation}
\cE_{s} (\pi_t(x))   = \pi_s(  \Phi_{s-t}(x)  ), \qquad t<s, x \in \cM.
\end{equation}
\end{dfn}

We call a (standard or reversed) Markov dilation {\it modular} if in their definitions we have moreover
 \begin{equation}\label{Eqn=ModDilation}
 \sigma_t^{\varphi_\cN} \circ \pi_s = \pi_s \circ \sigma_t^{\varphi}, s \geq 0, t \in \mathbb{R}.
 \end{equation}
 Without loss of generality for a standard Markov dilation we may assume that $\cN_s$ is generated by $\pi_t(x), t \leq s, x \in \cM$ and $\cN = \left( \cup_{s \geq 0} \cN_s \right)''$. Then the condition \eqref{Eqn=ModDilation} implies  that $\sigma^{\varphi_{\cN}}$ preserves $\cN_s$ for every $s \geq 0$.  

We typically denote standard/reversed Markov dilations by means of a triple $(\cN_{t}, \pi_t, \cE_t)_{t \geq 0}$. The von Neumann algebra $\cN$ is then implicitly understood as the $\sigma$-weak closure of $\cup_{t \geq 0} \cN_t$.

\begin{dfn} \label{Dfn=AUCt}
	An $L_\infty$-martingale $(x_t)_{t \geq 0}$ in a  von Neumann algebra $\cN$  with faithful normal  state $\psi$ and with filtration $(\cN_t)_{t \geq 0}$  has a.u. continuous path if for every $T > 0, \epsilon > 0$ there exists a projection $e \in \cN$ with $\psi(1 - e) <  \epsilon$ such that $[0, T] \rightarrow \cN: t \mapsto x_t e$ is continuous.
\end{dfn}

We require Lemma \ref{Lem=VanishHp} which was already observed in   \cite[p. 716]{JungeMei} and \cite[p. 637]{JungeMei}. For properties of vector valued $L_p$-spaces we refer to \cite{PisierBook}. 
Let $x = (x_t)_{t \geq 0}$ be a martingale as in Definition \ref{Dfn=AUCt}. Let $2 < p < \infty$. Let $\sigma= \{ t_1, \ldots, t_{n_\sigma} \}$ be a (finite) set of elements $0 < t_1 < \ldots < t_{n_\sigma} < \infty$. We write
\[
\Vert x \Vert_{h_p^d(\sigma)} =  \left( \sum_{t_i \in \sigma} \Vert x_{t_{i+1}} - x_{t_{i}} \Vert_{L_p}^p\right)^{\frac{1}{p}},
\]
and then $\Vert x \Vert_{h_p^d} = \lim_{\sigma, \cU} \Vert x \Vert_{h_p^d(\sigma)}$ for any ultrafilter containing the filter base of tails. This yields a norm, which is independent of the choice of ultrafilter \cite{JungePerrin}. Note that the  $h_p^d(\sigma)$-norm is just the  $L_p(\ell_p(\sigma))$-norm  \cite{PisierBook} of the martingale difference sequence $d_i(x) =  x_{t_{i+1}} - x_{t_{i}}$.
It follows straight from the definitions that if $\mathcal{Q}$ is a von Neumann subalgebra of $\mathcal{N}$ with expectation $\cE_{\mathcal{Q}}$ satisfying for all $t \geq 0$,  $\cE_{\mathcal{Q}} \circ \cE_t = \cE_t \circ \cE_{\mathcal{Q}}$. Then for every martingale $x = (x_t)_{t \geq 0}$ in $\cN$ we get 
\begin{equation}\label{Eqn=HardyExpected}
\Vert \cE_{\mathcal{Q}}(x) \Vert_{h_p^d} \leq \Vert x \Vert_{h_p^d}.
\end{equation}

\begin{lem}\label{Lem=VanishHp}
	If a martingale $x = (x_t)_{t \geq 0}$ has a.u. continuous path then $\Vert x \Vert_{h_p^d} = 0$ for all $p > 2$. 
\end{lem}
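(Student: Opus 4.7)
The plan is to start from the operator inequality
\[
\|y\|_p^p \;\le\; \|y\|_\infty^{p-2}\,\|y\|_2^2,
\]
valid for $y$ in an arbitrary non-commutative $L_p$ since $|y|^p = |y|^{p-2}\cdot |y|^2$ and $|y|^{p-2}\le \|y\|_\infty^{p-2}\mathbf{1}$. Applied to the martingale differences $d_i = x_{t_{i+1}}-x_{t_i}$ and summed, together with $L_2$-orthogonality of martingale differences (so that $\sum_i\|d_i\|_2^2 \le \|x_T-x_0\|_2^2$, uniformly in $\sigma$), this reduces matters to controlling $\max_i\|d_i\|_\infty$. For a genuinely norm-continuous path this would be zero in the limit; the task is to substitute for this the almost-uniform continuity.

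First I would fix $T>0$, $\delta>0$ and apply the a.u.\ continuity: for a small $\epsilon>0$ (to be chosen in terms of $\delta$ and $M=\sup_t\|x_t\|_\infty$) take a projection $e\in\cN$ with $\psi(1-e)<\epsilon$ such that $t\mapsto x_t e$ is continuous, hence uniformly continuous, on the compact interval $[0,T]$. Decompose $d_i = d_ie + d_i(1-e)$ and use the convexity inequality $(a+b)^p\le 2^{p-1}(a^p+b^p)$ to split
\[
\|x\|_{h_p^d(\sigma)}^p \;\le\; 2^{p-1}\sum_i \|d_ie\|_p^p \;+\; 2^{p-1}\sum_i \|d_i(1-e)\|_p^p.
\]
The first sum is treated by the same interpolation bound: it is dominated by $(\max_i\|d_ie\|_\infty)^{p-2}\|x_T-x_0\|_2^2$, and the maximum vanishes as the mesh $|\sigma|\to 0$ by uniform continuity of $t\mapsto x_te$. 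This is the main point where a.u.\ continuity enters.

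For the $(1-e)$-part I would bound $\|d_i(1-e)\|_p^p \le \|d_i\|_\infty^{p-2}\|d_i(1-e)\|_2^2 \le (2M)^{p-2}\|d_i(1-e)\|_2^2$ and rewrite
\[
\sum_i \|d_i(1-e)\|_2^2 \;=\; \psi\Bigl((1-e)\sum_i d_i^*d_i\Bigr),
\]
so that smallness follows from smallness of $(1-e)$ against the square function $S(\sigma)=\sum_id_i^*d_i$, a positive operator with $\psi(S(\sigma))\le \|x_T-x_0\|_2^2$ uniformly in $\sigma$. Here the delicate point is uniformity in $\sigma$: one makes the choice of $e$ finer if necessary by intersecting it with a spectral projection of a majorant of the family $\{S(\sigma)\}$, using that a.u.\ continuous martingales admit a quadratic variation controlling these square functions in $L_1$. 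After this refinement both sums can be made less than $\delta$ for all partitions $\sigma$ refining some $\sigma_0$, giving $\|x\|_{h_p^d}=0$.

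The main obstacle I expect is precisely this last step: the natural choice of $e$ given by the a.u.\ continuity has nothing to do with $S(\sigma)$, so one has to combine the two sources of ``smallness'' carefully, either by invoking an $L_1$-convergence of the square functions for a.u.\ continuous martingales, or by passing to the tracial setting via Haagerup's reduction where the corresponding uniform integrability argument is standard (and where the estimate was already observed in \cite{JungeMei} and \cite{JungePerrin}).
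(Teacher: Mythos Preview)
Your approach differs from the paper's, and the obstacle you single out in the $(1-e)$-term is real: with only $\psi(1-e)<\epsilon$ and an $L_1$-bound on $S(\sigma)$, you cannot make $\psi\bigl((1-e)S(\sigma)\bigr)$ small uniformly in $\sigma$, and the fixes you sketch (a majorant for $\{S(\sigma)\}$, or passing through Haagerup reduction) are not worked out. The paper avoids the split $d_i = d_ie + d_i(1-e)$ altogether. Combining a.u.\ continuity with Junge's noncommutative Doob maximal inequality, it obtains a factorisation $x_t = f(t)\,a$ on $[0,T]$ with $f:[0,T]\to\cN$ norm-continuous and $a\in L_p(\cN)$; then $d_i(x) = (f(t_{i+1})-f(t_i))\,a$, so the \emph{column} norm $\|d(x)\|_{L_p(\ell_\infty^c(\sigma))}$ tends to $0$ as the mesh shrinks. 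One then interpolates
\[
\|d(x)\|_{h_p^d(\sigma)} \;\le\; \|d(x)\|_{L_p(\ell_\infty^c(\sigma))}^{\theta}\,\|d(x)\|_{L_p(\ell_2^c(\sigma))}^{1-\theta}
\]
and bounds the second factor uniformly in $\sigma$ by the noncommutative Burkholder--Gundy inequality.

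The scalar inequality $\|y\|_p^p\le \|y\|_\infty^{p-2}\|y\|_2^2$ you start from is the degenerate case of this interpolation; it forces you to control $\max_i\|d_i\|_\infty$, which a.u.\ continuity does not provide --- hence the split and the leftover. Note that a.u.\ continuity is intrinsically a column statement (it controls $x_te$, not $\|x_t-x_s\|_\infty$), so the column vector-valued framework is its natural home, and Doob's inequality is exactly what converts ``$x_te$ continuous for $e$ of large measure'' into a single factorisation $x_t=f(t)a$. If you want to salvage your argument directly, the missing ingredient is again Burkholder--Gundy: it gives $\|S(\sigma)\|_{p/2}\le c_p^2\|x\|_p^2$ uniformly in $\sigma$, and then H\"older (at least in the tracial case) yields $\psi\bigl((1-e)S(\sigma)\bigr)\le \epsilon^{(p-2)/p}c_p^2\|x\|_p^2$. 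But this is the column interpolation in disguise, so the paper's route is the clean formulation.
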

\begin{proof}
 	We use the notation of Definition \ref{Dfn=AUCt}. 
	By Doob's inequality \cite{JungeCrelle} for every $2 < p < \infty$ and $T> 0$ there exists a continuous function $f: [0,T] \rightarrow \cN$ and an element $a \in L_p(\cN)$ such that $x_t = f(t) a$. 
	  Then taking the ultralimit over all finite subsets $\sigma \subseteq [0,T]$ we get $\Vert x \Vert_{L_p(\ell_\infty^c(\sigma))}  \rightarrow 0$.
	  By interpolation 
  \[
  \Vert x \Vert_{h_p^d(\sigma)} \leq \Vert d_j(x) \Vert_{L_{p}(\ell_\infty^c(\sigma))}^\theta \Vert d_j(x) \Vert_{L_{p}(\ell_2^c(\sigma)}^{1-\theta},
  \]
  with $\theta = p/2$.  Let $\sigma = \{ t_1 < \ldots < t_n \}$ be a finite subset of $[0,T]$.  Set $d_j(x) = x_{t_{j+1}} -  x_{t_{j}}$.
   The norm $\Vert (d_j(x))_j \Vert_{L_{p}(\ell_2^c(\sigma)}$ can be upper estimated by the norm  $\Vert x \Vert_p$ by the Burkholder-Gundy inequality \cite[Theorem 6.4]{HaagerupJungeXu} and in particular is uniformly bounded in $\sigma$. Then as we already showed that $\Vert d_j(x) \Vert_{L_{p}(\ell_\infty^c(\sigma))} \rightarrow 0$ we conclude. 

\end{proof}

Because modular Markov dilations are state preserving homomorphisms, they extend to maps
\[
\pi_s^{(p)}: L_p(\cM) \rightarrow  L_p(\cN_s): D_{\varphi}^{\frac{1}{2p}} x D_{\varphi}^{\frac{1}{2p}} \rightarrow    D_{\varphi_{\cN}}^{\frac{1}{2p}}  \pi_s(x) D_{\varphi_{\cN}}^{\frac{1}{2p}}, \qquad x \in \cM, s \geq 0.
\]
 These are $\cM$-$\cM$ bimodule maps in the sense that $\pi_s(x) \pi_s^{(p)}(y) \pi_s(z) = \pi_s^{(p)}(xyz)$ for $x,z \in \cM$ and $y \in L_p(\cM)$.
 
  We shall need a notion of almost uniform continuity of Markov dilations. These notions were considered in   \cite{JungeMei} (see also \cite{JungeMeiAJM}) and play an important role for embeddings of various BMO-spaces. Our notion {\it differs} from what is used in \cite[p. 725]{JungeMei}, which assumes a.u. continuity of two martingales $m(f)$ and $n(f)$. But actually the proof of the interpolation result in the first statement of \cite[Theorem 5.2 (ii)]{JungeMei} only uses a.u. continuity of the  martingale $m(f)$, which is what we need (the second statement of \cite[Theorem 5.2 (ii)]{JungeMei} requires more).

 \begin{dfn}\label{Dfn=AUCtPath}
 	A reversed Markov dilation $(\cN_{t}, \pi_t, \cE_t)_{ t \geq 0}$ for a Markov semi-group $\cS = (\Phi_t)_{t\geq0}$ on a von Neumann algebra $\cM$ has  a.u. continuous path if there exists a $\sigma$-weakly dense subset $B \subseteq \cM$  such that for all $x \in B$ the $L_\infty$-martingale
 	\begin{equation}\label{Eqn=mMartingale}
 	m(x) = (  m_t(x)   )_{t \geq 0} = ( \pi_t   \circ \Phi_t(x) )_{t \geq 0}.
 	\end{equation}
 	has a.u. continuous path.  
 \end{dfn}

\begin{rmk}
In the work in progress \cite{JRS} it is proved that Markov semi-groups on finite von Neumann algebras always admit a standard (as well as reversed) Markov dilation with a.u. continuous path. 
\end{rmk}

\section{Semi-group BMO for $\sigma$-finite von Neumann algebras} \label{Sect=BMO}

In this section we generalize some of the interpolation results from \cite{JungeMei}, in particular Theorem \ref{Thm=JungeMei}, for finite von Neumann algebras to arbitrary $\sigma$-finite von Neumann algebras. 

Throughout this section we let  $\mathcal{S} = (\Phi_t)_{t \geq 0}$ be a Markov semi-group on a $\sigma$-finite von Neumann algebra $\cM$ with fixed normal faithful state $\varphi$. In order to do reduction we must assume later that $\mathcal{S}$ is $\varphi$-modular. Furthermore in order to  interpret BMO-spaces (see Section \ref{Sect=BMO}) as interpolation spaces we must assume that $\cS$ is GNS-symmetric  (which in case the semi-group is $\varphi$-modular is equivalent to being KMS-symmetric).

\subsection{The Haagerup reduction method}
Let $\sfG = \cup_{n \in \mathbb{N}}  \frac{1}{n} \mathbb{Z}$ equipped with the discrete topology.   We set $\cR = \cM \rtimes_{\sigma^{\varphi}} \sfG$ which is the subalgebra of $\cM \otimes \cB(\ell_2(\sfG))$ generated by operators
\begin{equation}\label{Eqn=Crossed}
l_g = 1 \otimes \lambda_g, \qquad \pi_\varphi(x) = \sum_{g \in \sfG}  \sigma^{\varphi}_{-g}(x) \otimes e_{g, g}, \qquad g \in \sfG, x \in \cM.
\end{equation}
The map $\pi_\varphi$ identifies $\cM$ as a subalgebra of $\cR$ and hence we often omit it.
For every $\gamma \in \widehat{\sfG}$ there exists  an automorphism $\theta_\gamma: \cR \rightarrow \cR$ called the {\it dual action} that is determined by $\theta_\gamma(\pi_{\varphi}(x)) = \pi_\varphi(x), \theta_\gamma( l_g) = \langle \gamma, g \rangle_{\widehat{\sfG}, \sfG} l_g$ with $x \in \cM, g \in \sfG$. There exists a normal conditional expectation $\cE_{\cM}: \cR \rightarrow \pi_\varphi(\cM) \simeq \cM$ that is given by 
\begin{equation} \label{Eqn=CEVcross}
\cE_{\cM}(x) =  \int_{\gamma \in \widehat{\sfG}} \theta_{\gamma}(x) d\gamma, \qquad x \in \cR.
\end{equation}
 We set $\widetilde{\varphi} = \varphi \circ \pi_\varphi^{-1} \circ \cE_{\cM}$, which is a normal faithful state on $\cR$ that restricts to $\varphi$ on $\cM$. We define   $b_n = -i \log(\lambda_{2^{-n}})$ where we use the principal branch of the logarithm so that $0 \leq \Im(\log(z)) < 2\pi$. Then set $a_n = 2^n b_n, h_n = e^{-a_n}$ and
\[
\widetilde{\varphi}_n  = h_n^{\frac{1}{2}} \widetilde{\varphi_n} h_n^{\frac{1}{2}}, \qquad \cR_n := \cR_{\widetilde{\varphi}_n},
\]
Here  $\cR_{\widetilde{\varphi}_n} :=  \{ x \in \cR \mid \sigma^{\widetilde{\varphi}_n}_t(x) = x \}$ is the centralizer of $\widetilde{\varphi}_n$. By construction the operator $h_n$ is boundedly invertible. Furthermore,
\begin{equation}\label{Eqn=Varphi}
D_{\widetilde{\varphi}}^{it}  h_n D_{\widetilde{\varphi}}^{-it} = h_n, \quad {\rm and } \quad  D_{\widetilde{\varphi}_n}^{it}  h_n D_{\widetilde{\varphi}_n}^{-it} = h_n.
\end{equation}
Now we recall the following theorem from \cite{HaagerupJungeXu} (see also \cite[Section 7]{CPPR} for the weight case), which is known as the reduction method.

\begin{thm}\label{Thm=Reduction}
With the above notation we have:
\begin{enumerate}
\item Each $\cR_n$ is finite with normal faithful trace $\widetilde{\varphi}_n$.
\item  There exist  normal  conditional expectations  $\cE_n: \cR \rightarrow \cR_n$ such that $\widetilde{\varphi} \circ \cE_n = \widetilde{\varphi}$ and $\sigma^{\widetilde{\varphi}}_t \circ \cE_n   = \cE_n \circ \sigma^{\widetilde{\varphi}}_t$ for all $t \in \mathbb{R}$.
 \item For each $x \in \cR$ we have $\cE_n(x) \rightarrow x$ in the $\sigma$-strong topology.
\end{enumerate}
\end{thm}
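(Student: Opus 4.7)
The plan is to leverage two structural features encoded in \eqref{Eqn=Varphi}: $h_n$ is bounded with bounded inverse, and $h_n$ lies in the centralizer of $\widetilde{\varphi}$. With these in hand, items (1) and (2) follow quickly from modular theory; the main work is (3), the $\sigma$-strong convergence.

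For (1), the boundedness of $h_n$ and $h_n^{-1}$ makes $\widetilde{\varphi}_n = \widetilde{\varphi}(h_n^{1/2}\cdot h_n^{1/2})$ a normal faithful state on $\cR$. A standard principle in modular theory says that the restriction of such a state to its centralizer is a faithful normal tracial state, which immediately yields that $\cR_n$ is finite with $\widetilde{\varphi}_n|_{\cR_n}$ its trace. For (2), the plan is to apply Takesaki's theorem on the existence of state-preserving conditional expectations onto modular-invariant subalgebras. The Connes cocycle formula, together with \eqref{Eqn=Varphi}, gives $\sigma_t^{\widetilde{\varphi}_n} = \Ad(h_n^{it}) \circ \sigma_t^{\widetilde{\varphi}}$, and these two modular flows commute because $h_n^{it}$ is itself fixed by $\sigma^{\widetilde{\varphi}}$. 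Hence $\cR_n$, as the fixed-point algebra of $\sigma^{\widetilde{\varphi}_n}$, is $\sigma^{\widetilde{\varphi}}$-invariant, and Takesaki's theorem furnishes a unique normal $\widetilde{\varphi}$-preserving conditional expectation $\cE_n : \cR \to \cR_n$. Its commutation with $\sigma^{\widetilde{\varphi}}$ is then automatic from uniqueness, or equivalently from the realization of $\cE_n$ as a weak-$\ast$ mean of $\sigma^{\widetilde{\varphi}_n}$.

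For (3), which is the main obstacle, the plan is two-fold. First, verify the tower inclusion $\cR_n \subseteq \cR_{n+1}$ using the refinement of partitions $\tfrac{1}{2^n}\mathbb{Z} \subseteq \tfrac{1}{2^{n+1}}\mathbb{Z}$; combined with (2), this makes $(\cE_n(x))_n$ a bounded $L_\infty$-martingale for the increasing filtration $(\cR_n)_n$, which converges $\sigma$-strongly to the conditional expectation onto $\cR_\infty := (\cup_n \cR_n)''$. Second, establish the density $\cR_\infty = \cR$. By \eqref{Eqn=Crossed}, $\cR$ is generated by $\pi_\varphi(\cM)$ together with $\{l_g : g \in \sfG\}$; for $g \in \tfrac{1}{2^n}\mathbb{Z}$ the unitary $l_g$ is a function of $h_n$ and so already lies in $\cR_n$, while for $\sigma^\varphi$-analytic $x \in \cM$ one exhibits explicit approximants in $\cR_n$ by averaging over the $\sigma^{\widetilde{\varphi}_n}$-orbit and observing that the relevant spectral shifts become negligible as $n \to \infty$. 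The technical heart is precisely this last density, which hinges on a careful analysis of the interplay between the $\sfG$-action and the modular structure on the crossed product.
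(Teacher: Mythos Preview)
The paper does not give its own proof of this theorem; it is quoted from \cite{HaagerupJungeXu} (with a pointer to \cite[Section 7]{CPPR} for the weight case) as the known ``reduction method''. Your plan follows the broad outline of the original Haagerup--Junge--Xu argument and is correct in its essentials. One structural point worth making explicit, since it is what actually drives both the tower inclusion and the density in your item (3), is the \emph{periodicity} of $\sigma^{\widetilde{\varphi}_n}$: because $b_n$ has spectrum in $[0,2\pi)$ one computes that $\sigma^{\widetilde{\varphi}_n}_{2^{-n}} = \id$, so $\cR_n$ is the fixed-point algebra of a $\mathbb{T}$-action and the conditional expectation has the explicit averaging form
\[
\cE_n(x) = 2^n \int_0^{2^{-n}} \sigma_s^{\widetilde{\varphi}_n}(x)\,ds,
\]
which the paper records later as \eqref{Eqn=CEVRn}. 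This periodicity, together with the relation between $b_n$ and $b_{n+1}$, is what yields $\cR_n \subseteq \cR_{n+1}$ cleanly; your phrase ``refinement of partitions'' gestures at this but the mechanism is the halving of the period rather than anything about $\sfG$ directly.
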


The following lemma is standard. We included a sketch of the proof for convenience of the reader.
\begin{lem}\label{Lem=MarkovExtension} 
Let $\Phi$ be a $\varphi$-modular Markov map on $\cM$. Then there exists a unique normal $\widetilde{\varphi}$-modular extension $\widetilde{\Phi}$ on $\cR$ such that
\begin{equation}\label{Eqn=PhiTilde}
\widetilde{\Phi}( \pi_\varphi(  x) \lambda_g ) = \pi_\varphi( \Phi(x)) \lambda_g,   \qquad x \in \cR, g \in \widehat{\mathsf{G}}.
\end{equation}
In particular we have
\begin{equation}\label{Eqn=HCommute}
\widetilde{\Phi}(h_n^{it} \pi_{\varphi}(x) h_n^{-it}) = h_n^{it}  \widetilde{\Phi}( \pi_{\varphi}(x) )h_n^{-it}, x \in \cM.
\end{equation}
Moreover if $\Phi$ is Markov then so is $\widetilde{\Phi}$ and if $(\Phi_t)_{t \geq 0}$ is a Markov semi-group  then so is $(\widetilde{\Phi}_t)_{t\geq 0}$ for both $\widetilde{\varphi}$ and $\widetilde{\varphi}_n$.  If $(\Phi_t)_{t \geq 0}$ is KMS-symmetric, then so is $(\widetilde{\Phi}_t)_{t \geq 0}$ for both $\widetilde{\varphi}$ and $\widetilde{\varphi}_n$.
\end{lem}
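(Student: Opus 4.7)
The plan is to define $\widetilde\Phi$ as the restriction of the tensor product map $\Phi \otimes \id_{B(\ell_2(\sfG))}$ on $\cM \barotimes B(\ell_2(\sfG))$ to $\cR$, viewed as a von Neumann subalgebra via the generators in \eqref{Eqn=Crossed}. The role of $\varphi$-modularity is precisely to force this map to stabilize $\cR$: indeed,
\[
(\Phi \otimes \id)(\pi_\varphi(x)) = \sum_{g \in \sfG} \Phi(\sigma^\varphi_{-g}(x)) \otimes e_{g,g} = \sum_{g \in \sfG} \sigma^\varphi_{-g}(\Phi(x)) \otimes e_{g,g} = \pi_\varphi(\Phi(x)),
\]
while $(\Phi \otimes \id)(l_g) = l_g$ is trivial. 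Formula \eqref{Eqn=PhiTilde} follows at once, and uniqueness of a normal extension is automatic from $\sigma$-weak density of the $*$-algebra generated by $\pi_\varphi(\cM) \cup \{l_g\}_{g \in \sfG}$ in $\cR$.

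The commutation identity \eqref{Eqn=HCommute} is a special case of the general principle that any map of the form $\Phi \otimes \id$ is $(\C 1 \otimes B(\ell_2(\sfG)))$-bimodular: since $h_n^{it}$ lies in the von Neumann subalgebra generated by the $l_g$'s, and thus in $\C 1 \otimes B(\ell_2(\sfG))$, conjugation by $h_n^{it}$ commutes with $\widetilde\Phi$, and more generally $h_n^s$ can be pulled out of $\widetilde\Phi$ on either side.

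The remaining Markov-type properties are then mechanical. The semi-group law $\widetilde\Phi_s \widetilde\Phi_t = \widetilde\Phi_{s+t}$ follows from that of $\Phi$; preservation of $\widetilde\varphi$ is checked on generators via $\widetilde\varphi(\widetilde\Phi(\pi_\varphi(x)\lambda_g)) = \varphi(\Phi(x))\delta_{g,0} = \varphi(x)\delta_{g,0} = \widetilde\varphi(\pi_\varphi(x)\lambda_g)$ and extended by normality; preservation of $\widetilde\varphi_n = \widetilde\varphi(h_n^{1/2}\,\cdot\, h_n^{1/2})$ then follows from the bimodularity. For $\widetilde\varphi$-modularity I would invoke the standard dual-weight formula $\sigma^{\widetilde\varphi}_t(\pi_\varphi(x)\lambda_g) = \pi_\varphi(\sigma^\varphi_t(x))\lambda_g$ (valid because the crossed-product action is $\sigma^\varphi$ itself, making the Connes cocycle trivial), so that commutation of $\widetilde\Phi$ with $\sigma^{\widetilde\varphi}_t$ reduces directly to $\varphi$-modularity of $\Phi$; then $\sigma^{\widetilde\varphi_n}_t = \Ad(h_n^{it}) \circ \sigma^{\widetilde\varphi}_t$ combined with \eqref{Eqn=HCommute} yields $\widetilde\varphi_n$-modularity.

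For KMS-symmetry, the $\varphi$-modularity assumption lets me work with the equivalent GNS-symmetry. On generators $X = \pi_\varphi(x)\lambda_g$ and $Y = \pi_\varphi(y)\lambda_h$, both $\widetilde\varphi(\widetilde\Phi(X)^* Y)$ and $\widetilde\varphi(X^* \widetilde\Phi(Y))$ collapse via the relation $\lambda_g \pi_\varphi(z) = \pi_\varphi(\sigma^\varphi_g(z))\lambda_g$ to $\varphi(\Phi(x)^* y)\delta_{g,h}$ and $\varphi(x^* \Phi(y))\delta_{g,h}$ respectively, and these agree by GNS-symmetry of $\Phi$. For $\widetilde\varphi_n$, I would rewrite
\[
\widetilde\varphi_n(\widetilde\Phi(X)^* Y) = \widetilde\varphi(\widetilde\Phi(h_n^{1/2} X^*) \cdot Y h_n^{1/2})
\]
using bimodularity over left multiplication by $h_n^{1/2}$, apply GNS-symmetry w.r.t. $\widetilde\varphi$ already established, and then reverse the manipulation to obtain $\widetilde\varphi_n(X^* \widetilde\Phi(Y))$. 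The main technical concern throughout is careful bookkeeping with the crossed-product conventions and the dual-weight modular group; once $\widetilde\Phi$ is identified as a tensor-product map, the bimodularity reduces every remaining claim to an elementary calculation on generators.
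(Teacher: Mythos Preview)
Your proposal is correct and follows essentially the same approach as the paper: define $\widetilde\Phi$ as the restriction of $\Phi\otimes\id$ to $\cR$, use $\varphi$-modularity to see it stabilizes $\cR$, and then verify the remaining properties on generators. The only stylistic difference is that the paper phrases the commutation \eqref{Eqn=HCommute} via the multiplicative domain of $\widetilde\Phi$ (noting $\widetilde\Phi(h_n^{it})^\ast\widetilde\Phi(h_n^{it})=1=\widetilde\Phi(h_n^{-it}h_n^{it})$), whereas you invoke the equivalent and arguably cleaner observation that $\Phi\otimes\id$ is automatically $(\C 1\otimes B(\ell_2(\sfG)))$-bimodular; both arguments yield the same conclusion with the same amount of work.
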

\begin{proof}
As $\cR = \cM \rtimes_{\sigma^{\varphi}} \mathsf{G} \subseteq \cM \otimes \cB(\ell_2(\mathsf{G}))$. We let $\widetilde{\Phi}$ be the restriction of $\Phi \otimes \id_{ \cB(\ell_2(\mathsf{G}))}$ to $\cR$. Using that $\Phi$ commutes with the modular group of $\varphi$ \eqref{Eqn=PhiTilde} follows. If $\Phi$ is Markov then 
for $x \in \cM, g \in \sfG$, 
\[
\begin{split}
& \widetilde{\varphi} \circ \widetilde{\Phi} ( \pi_{\varphi}(x)  \lambda_g)
=  \widetilde{\varphi}   ( \pi_{\varphi}(\Phi(x) )  \lambda_g)
=  \varphi \circ \cE_{\cM}(  \pi_{\varphi}(\Phi(x) )  \lambda_g ) \\
= &     \varphi( \Phi(x) ) \delta_{g,0}
=  \varphi( x ) \delta_{g,0}
= \widetilde{\varphi} ( \pi_{\varphi}(x)  \lambda_g).
\end{split}
\]
So $\widetilde{\Phi}$ is Markov. 
  As $h_n$ is contained in $1 \otimes \mathcal{L}(\sfG)$ we have that $\widetilde{\Phi}(h_n^{\ast} h_n) = h_n^\ast h_n =  \widetilde{\Phi}(h_n)^\ast  \widetilde{\Phi}(h_n)$. So $h_n$ is in the multiplicative domain of $\widetilde{\Phi}$ \cite[Proposition 1.5.6]{NateTaka} and so 
$\widetilde{\Phi}(h_n^{\frac{1}{2}} y h_n^{\frac{1}{2}} ) = h_n^{\frac{1}{2}} \widetilde{\Phi}( y  ) h_n^{\frac{1}{2}}$. It follows that   $\widetilde{\Phi}$ is   Markov for $\widetilde{\varphi}_n$. Also $\widetilde{\Phi}(h_n^{-it} h_n^{it}) = 1 = h_n^{-it}h_n^{it} = \widetilde{\Phi}(h_n^{-it})  \widetilde{\Phi}(h_n^{it})$, so that also $h_n^{it}$ is in the multiplicative domain of $\widetilde{\Phi}$ and so \eqref{Eqn=HCommute} follows from \cite[Proposition 1.5.6]{NateTaka}.

Furthermore, since $\Phi_t$ is strongly continuous $\widetilde{\Phi}_t$ is strongly continuous. 
 As the modular group $\sigma^{\widetilde{\varphi}}$ is determined by $\sigma^{\widetilde{\varphi}}_t(\pi_\varphi(x)) = \pi_\varphi( \sigma_t^\varphi(x) ), x \in \cM$ and $\sigma^{\widetilde{\varphi}}_t(l_g) = l_g, g \in \sfG$ it follows that $\widetilde{\Phi}_t \circ \sigma^{\widetilde{\varphi}}_s =  \sigma^{\widetilde{\varphi}}_s \circ \widetilde{\Phi}_t$.
From the definition one finds that $\widetilde{\varphi}(\widetilde{\Phi}_t(x) y) = \widetilde{\varphi}(x \widetilde{\Phi}_t(y))$ which for   $\widetilde{\varphi}$-modular semi-groups yields that the semi-group is KMS-symmetric (see \eqref{Eqn=KMSSym}).
\end{proof}

From this point let $\widetilde{\cS} = (\widetilde{\Phi}_t)_{t \geq 0}$ be the extension of the Markov semi-group of Theorem \ref{Lem=MarkovExtension} of a  prefixed $\varphi$-modular Markov semi-group $\mathcal{S} = (\Phi_t)_{ t \geq 0}$.

\subsection{Reducing Markov dilations}

In this section we show that Markov dilations and a.u. continuity behaves well with reduction. 

\begin{prop}\label{Prop=SExtension}
Suppose that $\mathcal{S}$ is $\varphi$-modular and admits a standard (resp. reversed) $\varphi$-modular Markov dilation. Then the semi-group $\widetilde{\mathcal{S}}$ admits a standard (resp. reversed) $\widetilde{\varphi}$-modular Markov dilation. Moreover, if the  reversed Markov dilation of $\mathcal{S}$  has a.u. continuous path, then the reversed Markov dilation of $\widetilde{\mathcal{S}}$ may be chosen to have a.u. continuous path. 
\end{prop}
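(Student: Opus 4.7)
The plan is to mimic the Haagerup reduction construction on the $\cN$-side. Set $\widetilde{\cN} = \cN \rtimes_{\sigma^{\varphi_\cN}} \sfG$ with canonical state $\widetilde{\varphi_\cN}$ and identify $\cN$ inside it via $\pi_{\varphi_\cN}$. By the modularity hypothesis \eqref{Eqn=ModDilation} each $\cN_s$ is $\sigma^{\varphi_\cN}$-invariant, so we may form the von Neumann subalgebras $\widetilde{\cN}_s := \cN_s \rtimes_{\sigma^{\varphi_\cN}} \sfG \subseteq \widetilde{\cN}$, which form an increasing (respectively decreasing) filtration. The conditional expectation $\cE_s$, being $\varphi_\cN$-preserving and commuting with $\sigma^{\varphi_\cN}$ on $\cN$, extends to a normal conditional expectation $\widetilde{\cE}_s : \widetilde{\cN} \to \widetilde{\cN}_s$ determined by $\widetilde{\cE}_s(\pi_{\varphi_\cN}(y) l_g) = \pi_{\varphi_\cN}(\cE_s(y)) l_g$, which preserves $\widetilde{\varphi_\cN}$. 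Similarly, since $\pi_s$ intertwines $\sigma^{\varphi}$ and $\sigma^{\varphi_\cN}$, it extends to a state-preserving $\ast$-homomorphism $\widetilde{\pi}_s : \cR \to \widetilde{\cN}_s$ fixing the $l_g$'s; modularity of $\widetilde{\pi}_s$ with respect to $\sigma^{\widetilde{\varphi}}$ and $\sigma^{\widetilde{\varphi_\cN}}$ follows because $\sigma^{\widetilde{\varphi}}$ acts as $\sigma^\varphi$ on $\pi_\varphi(\cM)$ and trivially on $\{l_g\}$ (and likewise for $\sigma^{\widetilde{\varphi_\cN}}$).

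The Markov dilation identity is then immediate on the generators: for $x \in \cM$ and $g \in \sfG$, using \eqref{Eqn=PhiTilde},
\[
\widetilde{\cE}_s(\widetilde{\pi}_t(\pi_\varphi(x) l_g)) = \cE_s(\pi_t(x)) l_g = \pi_s(\Phi_{|t-s|}(x)) l_g = \widetilde{\pi}_s(\widetilde{\Phi}_{|t-s|}(\pi_\varphi(x) l_g)),
\]
giving \eqref{Eqn=MarkovDilationRelation} (resp.\ \eqref{Eqn=RevMarkovDilationRelation}).

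For the a.u.\ continuity claim, let $B \subseteq \cM$ be the $\sigma$-weakly dense subset provided by the a.u.\ continuous reversed dilation of $\mathcal{S}$, and let $\widetilde{B} \subseteq \cR$ denote the linear span of $\{ \pi_\varphi(x) l_g : x \in B, g \in \sfG \}$, which is $\sigma$-weakly dense in $\cR$. For $y = \sum_{i=1}^n \pi_\varphi(x_i) l_{g_i} \in \widetilde{B}$, a direct computation gives
\[
\widetilde{m}_t(y) = \widetilde{\pi}_t(\widetilde{\Phi}_t(y)) = \sum_{i=1}^n \pi_{\varphi_\cN}(m_t(x_i)) \, l_{g_i},
\]
where $m_t(x_i) = \pi_t(\Phi_t(x_i))$ is the original martingale. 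Given $T, \epsilon > 0$, for each $i$ choose a projection $e_i \in \cN$ with $\varphi_\cN(1 - e_i) < \epsilon/n$ such that $t \mapsto m_t(x_i) e_i$ is norm continuous on $[0,T]$. Set $f := \bigwedge_{i=1}^n \sigma^{\varphi_\cN}_{g_i}(e_i)$ and $\widetilde{e} := \pi_{\varphi_\cN}(f) \in \widetilde{\cN}$. Then $\widetilde{\varphi_\cN}(1 - \widetilde{e}) = \varphi_\cN(1-f) < \epsilon$, and using the covariance $l_g \pi_{\varphi_\cN}(z) = \pi_{\varphi_\cN}(\sigma^{\varphi_\cN}_g(z)) l_g$ together with $\sigma^{\varphi_\cN}_{-g_i}(f) \leq e_i$,
\[
\widetilde{m}_t(y)\, \widetilde{e} = \sum_{i=1}^n \pi_{\varphi_\cN}\bigl(m_t(x_i)\, \sigma^{\varphi_\cN}_{-g_i}(f)\bigr) l_{g_i} = \sum_{i=1}^n \pi_{\varphi_\cN}\bigl(m_t(x_i)\, e_i\, \sigma^{\varphi_\cN}_{-g_i}(f)\bigr) l_{g_i},
\]
which is norm continuous in $t$. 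Hence $\widetilde{m}(y)$ has a.u.\ continuous path.

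The main technical point is the bookkeeping around the covariance relations: everything works provided \eqref{Eqn=ModDilation} holds, since that is what makes each $\cN_s$ $\sigma^{\varphi_\cN}$-stable, guarantees the existence of the extended expectation $\widetilde{\cE}_s$, and lets $\widetilde{\pi}_s$ interact correctly with $\widetilde{\Phi}_t$ built in Lemma \ref{Lem=MarkovExtension}. The only nontrivial step in the continuity proof is the replacement of the projection $e_i$ by its $\sigma^{\varphi_\cN}_{g_i}$-shift so that the $l_{g_i}$ on the right absorbs the modular twist and the continuity of $m_t(x_i) e_i$ can be invoked unchanged.
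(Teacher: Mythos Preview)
Your construction is exactly the paper's: pass to crossed products $\widetilde{\cN}=\cN\rtimes_{\sigma^{\varphi_\cN}}\sfG$, $\widetilde{\cN}_s=\cN_s\rtimes_{\sigma^{\varphi_\cN}}\sfG$, and extend $\pi_s,\cE_s$ componentwise; the dilation identity on generators is checked the same way. The only substantive difference is in the a.u.\ continuity bookkeeping: the paper works with generators written as $l_g\,\pi_\varphi(x)$ (with $l_g$ on the \emph{left}), so that $\widetilde{m}_t(l_g\pi_\varphi(x))\,\pi_{\varphi_\cN}(e)=l_g\,\pi_{\varphi_\cN}(m_t(x)e)$ directly and no covariance twist is needed at all. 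Your ordering $\pi_\varphi(x)l_g$ forces the twist, and there is a sign slip in your computation: from $l_g\pi_{\varphi_\cN}(z)=\pi_{\varphi_\cN}(\sigma^{\varphi_\cN}_g(z))l_g$ one obtains $\pi_{\varphi_\cN}(m_t(x_i)\,\sigma^{\varphi_\cN}_{g_i}(f))\,l_{g_i}$, not $\sigma^{\varphi_\cN}_{-g_i}(f)$; correspondingly $f$ should be $\bigwedge_i \sigma^{\varphi_\cN}_{-g_i}(e_i)$ so that $\sigma^{\varphi_\cN}_{g_i}(f)\le e_i$. With that correction the argument is fine. Your explicit treatment of finite linear combinations in $\widetilde{B}$ is a point the paper leaves implicit.
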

\begin{proof}
 As before write $\cS = (\Phi_t)_{t \geq 0}$ for the semi-group and $\widetilde{\cS} = (\widetilde{\Phi}_t)_{t \geq 0}$ for the crossed product extension as in Lemma \ref{Lem=MarkovExtension}.

\vspace{0.3cm}

\noindent {\it Part 1: Dilations.}  
Let $(\cN_s, \pi_s, \cE_s)_{s \geq 0}$ be a $\varphi$-modular reversed Markov dilation for $\cS$ with respect to a normal faithful state $\psi$ on $\cN$. Let $\cO = \cN \rtimes_{\sigma^{\psi_\cN}} \sfG$ and $\cO_s = \cN_s \rtimes_{\sigma^{\psi_\cN}} \sfG$ and equip it with the dual weight $\widetilde{\psi} = \psi \circ \pi_\psi^{-1} \circ \int_{\widehat{\sfG}} \theta_\gamma d\gamma$. Because $\sigma^{\psi}_t \circ \pi_s = \pi_s \circ \sigma^{\varphi}_t$ it follows that $\pi_s$ extends uniquely to a normal map $\widetilde{\pi}_s: \cR \rightarrow \cO$ that intertwines the modular groups of $\sigma^\psi$ and $\sigma^{\varphi}$. Similarly because for $\varphi_{\cN}$-preserving conditional expectations we have $\cE_s \circ \sigma^{\psi}_t = \sigma_t^\psi \circ \cE_s, s \geq 0, t \in \mathbb{R}$ we get conditional expectations $\widetilde{\cE}_s: \cO \rightarrow \cO_s$. In particular $(\cO_s)_{s \geq 0}$ is filtered as the operators $\pi_{\psi}(x), x \in \cup_{s \geq 0} \cN_s, l_t, t \in \sfG$ are dense in $\cO$. We claim that $( \cO_s,  \widetilde{\pi}_s,  \widetilde{\cE}_s)_{s \geq 0}$ is a reversed Markov dilation.

For $g \in \sfG$ let $l_g^{\cR} \in \cR$ and  $l_g^{\cO} \in \cO$ be the operators $l_g$ of \eqref{Eqn=Crossed} in these respective von Neumann algebras.
It follows  from the relations $\widetilde{\Phi}_t \circ \pi_\varphi = \pi_\varphi \circ \Phi_t$, $\widetilde{\pi}_s \circ \pi_\varphi = \pi_{\psi} \circ \pi_s$ and  $\pi_{\psi} \circ  \cE_{s} = \widetilde{\cE}_{s} \circ \pi_{\psi}$ that for $x\in \cM$, $t < s$, 
\begin{equation}\label{Eqn=IntertwineI}
\begin{split}
& \widetilde{\pi}_s \circ \widetilde{\Phi}_{s-t} ( \pi_{\varphi}(x)  l_g^{\cR} )
= \widetilde{\pi}_s \circ \pi_{\varphi} (    \Phi_{s-t}(x) )l_g^{\cO} 
= \pi_{\psi} \circ  \pi_s \circ  \Phi_{s-t}(x)l_g^{\cO} \\
= & \pi_{\psi} \circ  \cE_{s} \circ \pi_t(x)l_g^{\cO}
=  \widetilde{\cE}_{s} \circ \widetilde{\pi}_t (\pi_{\varphi}(x) l_g^{\cR}).
\end{split}
\end{equation}
Therefore  \eqref{Eqn=MarkovDilationRelation} follows by density.

This proves the first statement for reversed Markov dilations, for standard Markov dilations the proof is similar.

\vspace{0.3cm}

\noindent {\it Part 2: A.u. continuity of the paths.} Suppose now that the reversed Markov dilation   $(\cN_s, \pi_s, \cE_s)_{s \geq 0}$  in part 1 has a.u. continuous path.   By Definition \ref{Dfn=AUCtPath} there exists a $\sigma$-weakly dense subspace $B \subseteq \cM$ such that for $x \in B, T>0, \epsilon >0$  there is a projection $e \in \cN$   with $\psi(1-e)  <\epsilon$ such that the map $[0,T] \ni t \mapsto m_t(x) e$ is continuous with $m_t(x) := \pi_t  (\Phi_t (x))$. Let $\widetilde{e}  = \pi_{\psi} (e)$. Then we have for $g \in \sfG$ that
\[
\begin{split}
& \widetilde{\pi}_t   (  \widetilde{\Phi}_t ( l_g \pi_\varphi (x)  )) \widetilde{e}  
=   l_g \widetilde{\pi}_t   ( \widetilde{\Phi}_t   ( \pi_\varphi(x)  ) )  \pi_{\psi} (e) \\
= & l_g  \pi_{\psi}  \left( \pi_t  (  \Phi_t   (   x )  )   \right)   \pi_{\psi} (e)   
=   l_g  \pi_{\psi}   \left( \pi_t   (  \Phi_t   (   x )  )  e \right).   
\end{split}
\]
So if we put $\widetilde{m}_t( y ) :=  \widetilde{\pi}_t (  \widetilde{\Phi}_t( y ))$ we see that 
\[
[0, T] \ni t \mapsto \widetilde{m}_t(   l_g \pi_\varphi(x)     ) \widetilde{e} = l_g  \pi_{\psi} \left(  m_t( x ) e \right).  
\] 
is continuous as  $[0, T] \ni t \mapsto  m_t( x ) e$ is continuous. As the span of $l_g\pi_\varphi(x), x \in B, g \in \sfG$ is $\sigma$-weakly dense in $\mathcal{O}$ this concludes the second claim. 
\end{proof}

\subsection{Semi-group BMO and interpolation structure}\label{Sect=BMO}    For $x \in \cM$ we define the column BMO-semi-norm
\[
\Vert x \Vert_{\BMO^c_{\mathcal{S}}} = \sup_{t} \Vert \Phi_t(x)^\ast \Phi_t(x) - \Phi_t(x^\ast x) \Vert^{\frac{1}{2}}.
\]
Then set the row BMO-semi-norm and the BMO-semi-norm  by
\[
\Vert x \Vert_{\BMO^r_{\mathcal{S}}} = \Vert x^\ast \Vert_{\BMO^c_{\mathcal{S}}}, \qquad \Vert x \Vert_{\BMO_{\mathcal{S}}} = \max(  \Vert x \Vert_{\BMO^c_{\mathcal{S}}},  \Vert x \Vert_{\BMO^r_{\mathcal{S}}}).
\]
As proved in \cite[Proposition 2.1]{JungeMei} these assignments are indeed semi-norms. To proceed further to interpolation we need to treat normed spaces instead and  we need to identify BMO-spaces as subspaces of $L_1(\cM)$. We can do this using GNS-symmetry and  modularity of   Markov semi-groups. Note that in \cite{JungeMei} KMS/GNS-symmetry is also part of the standard assumptions on the semi-groups. 

\begin{lem}\label{Lem=BMONilspace}
We have,
	\[
	\left\{ x \in \cM  \mid \Vert x \Vert_{ \BMO_{\cS} } = 0   \right\}
	\supseteq \left\{ x \in \cM  \mid \forall t \geq 0: \:  \Phi_t (x) = x   \right\}.
	\]
	Moreover, 	if $\cS$ is GNS-symmetric then we have equality of these sets. In particular  on  $\cM^\circ$ the $\BMO_{\mathcal{S}}$-semi-norm is actually a norm.
\end{lem}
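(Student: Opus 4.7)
\emph{Inclusion $\supseteq$.} The plan is to combine $2$-positivity (Kadison--Schwarz) with faithfulness of $\varphi$. If $\Phi_t(x)=x$ for every $t\ge 0$, then $\Phi_t(x^*x)\ge \Phi_t(x)^*\Phi_t(x)=x^*x$, so $\Phi_t(x^*x)-x^*x\ge 0$. Invariance $\varphi\circ\Phi_t=\varphi$ yields $\varphi(\Phi_t(x^*x)-x^*x)=0$, and by faithfulness of $\varphi$ this forces $\Phi_t(x^*x)=x^*x$. Replacing $x$ by $x^*$ gives the same identity for $xx^*$, so $\Vert x\Vert_{\BMO^c_\cS}=\Vert x\Vert_{\BMO^r_\cS}=0$.

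\emph{Inclusion $\subseteq$ under GNS-symmetry.} The strategy is to translate the vanishing of $\Vert x\Vert_{\BMO^c_\cS}$ into a norm-preservation statement for the GNS $L_2$-implementation and then exploit that this implementation is a positive self-adjoint contraction semi-group. Consider the contraction $T_t$ on $L_2(\cM)$ characterized by $T_t(yD_\varphi^{1/2})=\Phi_t(y)D_\varphi^{1/2}$ (contractivity follows from Kadison--Schwarz plus $\varphi$-invariance, since $\Vert\Phi_t(y)D_\varphi^{1/2}\Vert_2^2=\varphi(\Phi_t(y)^*\Phi_t(y))\le\varphi(\Phi_t(y^*y))=\varphi(y^*y)$). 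GNS-symmetry of $\cS$ is exactly the statement that each $T_t$ is self-adjoint on $L_2(\cM)$; by Hille--Yosida we may therefore write $T_t=e^{-tA}$ with $A\ge 0$ self-adjoint. Now assume $\Vert x\Vert_{\BMO^c_\cS}=0$, i.e.\ $\Phi_t(x^*x)=\Phi_t(x)^*\Phi_t(x)$ for every $t$. Applying $\varphi$ and using state-preservation gives $\varphi(x^*x)=\varphi(\Phi_t(x)^*\Phi_t(x))$, that is $\Vert\xi\Vert_2=\Vert T_t\xi\Vert_2$ for $\xi:=xD_\varphi^{1/2}$. Consequently
\[
\langle(1-e^{-2tA})\xi,\xi\rangle=\Vert\xi\Vert^2-\Vert T_t\xi\Vert^2=0,
\]
and positivity together with the spectral theorem force $\xi\in\ker A$, hence $T_t\xi=\xi$ for all $t\ge 0$. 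Unwinding, $\Phi_t(x)D_\varphi^{1/2}=xD_\varphi^{1/2}$, and faithfulness of $\varphi$ (so that right multiplication by $D_\varphi^{1/2}$ is injective on $\cM$) yields $\Phi_t(x)=x$.

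\emph{The assertion about $\cM^\circ$ and the main obstacle.} If $x\in\cM^\circ$ satisfies $\Vert x\Vert_{\BMO_\cS}=0$, the equality of the two sets just proved gives $\Phi_t(x)=x$ for every $t$, but by definition of $\cM^\circ$ the same expression tends $\sigma$-weakly to $0$ as $t\to\infty$; hence $x=0$ and $\Vert\cdot\Vert_{\BMO_\cS}$ separates points of $\cM^\circ$. The one delicate step in the argument is the passage from the $\cM$-level identity $\Phi_t(x^*x)=\Phi_t(x)^*\Phi_t(x)$ to the fixed-point relation $T_t\xi=\xi$: this uses both self-adjointness of $T_t$ (from GNS-symmetry) and positivity of the generator $A$ (from contractivity), since a norm-preserving vector under a general contraction need not be fixed. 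Everything else is bookkeeping with Kadison--Schwarz and faithfulness of $\varphi$.
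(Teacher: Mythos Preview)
Your proof is correct. The $\supseteq$ direction coincides with the paper's argument in spirit: the paper cites that the fixed-point set of a Markov map is a $\ast$-algebra, whereas you prove this fact directly via Kadison--Schwarz and faithfulness of $\varphi$. The $\cM^\circ$ conclusion is handled identically.

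For the $\subseteq$ direction you take a genuinely different route. The paper argues algebraically: vanishing of both the row and column BMO-semi-norms places $x$ in the multiplicative domain of every $\Phi_t$, and then GNS-symmetry is used to shift $\Phi_t$ across the state, $\varphi(\Phi_t(y)\Phi_t(x))=\varphi(y\Phi_{2t}(x))$, forcing $\Phi_{2t}(x)=x$. You instead pass to the left GNS $L_2$-implementation $T_t(yD_\varphi^{1/2})=\Phi_t(y)D_\varphi^{1/2}$, observe that GNS-symmetry makes $T_t$ self-adjoint (hence $T_t=e^{-tA}$ with $A\ge 0$), and read the column BMO condition as $\Vert T_t\xi\Vert=\Vert\xi\Vert$ for $\xi=xD_\varphi^{1/2}$; the spectral theorem then forces $\xi\in\ker A$, so $\Phi_t(x)=x$ by separability of the GNS vector. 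Your approach has the advantage of being self-contained (no multiplicative-domain lemma needed) and of using only the column hypothesis $\Vert x\Vert_{\BMO^c_{\cS}}=0$ rather than both row and column. The paper's approach is shorter once the multiplicative-domain machinery is in hand and makes the role of GNS-symmetry more transparently algebraic.
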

\begin{proof}
		\noindent $\supseteq$. For each $t$ the space of fixed points for $\Phi_t$ is a $\ast$-algebra, see \cite[Remark 7.3]{JungeXuJAMS}. This shows that if $\Phi_t(x) = x$ we also have that
	\[
	\Phi_t(x^\ast x) - \Phi_t(x)^\ast \Phi_t(x) = x^\ast x - x^\ast x = 0,
	\]
	and similarly with $x$ replaced by $x^\ast$. That is $\Vert x \Vert_{ \BMO_{\cS} } = 0$. 
	
	\noindent $\subseteq$. Assume $\cS$ is GNS-symmetric.  If $\Vert x \Vert_{ \BMO_{\cS} } = 0$ (in particular both the row and column BMO-semi-norm is 0) then by \cite[Proposition 1.5.6]{NateTaka} we see that $x$ is in the multiplicative domain of $\Phi_t$ for every $x \in \cM$. We then get for $y \in \cM$ that
	\[
	\varphi(yx) = \varphi(\Phi_t(y x))
	= \varphi(\Phi_t(y) \Phi_t(x)) 
	= \varphi(y \Phi_{2t}(x)),
	\]
	where the last equality uses $\Phi_t$ is GNS-symmetric. This implies that $\Phi_{2t}(x) = x$ for all $t \geq 0$. 
	
	Finally, take $x \in \cM^\circ$ so $\Phi_t(x) \rightarrow 0$ $\sigma$-weakly. Then, if $\Vert x \Vert_{\BMO_{\cS}} = 0$ we get by this lemma that for all $t \geq 0$ we have $\Phi_t(x) = x$ so that $x = 0$.  
	
\end{proof}

 If $\cS$ is $\varphi$-modular GNS-symmetry of $\cS$ is equivalent to  KMS-symmetry. We prefer to include the KMS-symmetry as part of our statements as all embeddings and interpolation structures are defined with respect to symmetric embeddings. Assume now that $\cS$ is $\varphi$-modular and KMS-symmetric. 
We write $\BMO^\circ_{\mathcal{S}}$ for the completion of $\cM^\circ$ equipped with respect to the $\BMO^\circ_{\mathcal{S}}$-norm. We denote  $\BMO^\circ_{\mathcal{S}}(\cM)$ in case we explicitly want to distinguish the von Neumann algebra.

We now turn $\BMO^\circ_{\mathcal{S}}$ and into the framework of  compatible couples of Banach spaces, see \cite{BerghLofstrom}.  Here we really need to restrict ourselves to $\BMO^\circ_{\mathcal{S}}$ and not just $\cM$ with the $\BMO_{\mathcal{S}}$-norm.

\begin{lem}\label{Lem=SemiGroupTechnicalThing}
	Suppose that $A_2 \geq 0$ is a positive self-adjoint operator on a Hilbert space $H$ so that $\Phi_t^{(2)} = \exp(-tA_2)$ is a semi-group of positive contractions on $H$. Suppose that for $\xi \in H$ we have that $\Phi_t^{(2)} \xi \rightarrow 0$ weakly as $t \rightarrow \infty$. Then in fact $\Phi_t^{(2)} \xi \rightarrow 0$ in the norm of $H$.
\end{lem}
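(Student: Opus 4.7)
The proof plan is short because the hypotheses are strong: $A_2$ is positive \emph{self-adjoint}, so by the functional calculus each $\Phi_t^{(2)} = \exp(-tA_2)$ is itself a positive self-adjoint contraction.

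The key identity is the standard trick for self-adjoint semi-groups: for $\xi \in H$,
\[
\Vert \Phi_t^{(2)} \xi \Vert^2 = \langle \Phi_t^{(2)} \xi, \Phi_t^{(2)} \xi \rangle = \langle (\Phi_t^{(2)})^\ast \Phi_t^{(2)} \xi, \xi \rangle = \langle \Phi_{2t}^{(2)} \xi, \xi \rangle,
\]
where I have used self-adjointness of $\Phi_t^{(2)}$ and the semi-group property $\Phi_t^{(2)} \Phi_t^{(2)} = \Phi_{2t}^{(2)}$. By hypothesis $\Phi_{2t}^{(2)} \xi \to 0$ weakly as $t \to \infty$, so in particular the scalar quantity $\langle \Phi_{2t}^{(2)} \xi, \xi \rangle$ tends to $0$. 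Thus $\Vert \Phi_t^{(2)} \xi \Vert \to 0$, which is the desired conclusion.

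There is essentially no obstacle: the only thing to verify carefully is that the word ``positive'' in the statement refers to $\Phi_t^{(2)}$ being a positive operator in the Hilbert space sense (equivalently, self-adjoint with nonnegative spectrum), which is automatic from $A_2 \geq 0$ self-adjoint via the spectral theorem. Once self-adjointness is in hand, the computation above is the whole argument, and no compactness, spectral projection or density argument is needed.
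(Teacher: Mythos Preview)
Your argument is correct and in fact more elementary than the paper's. The paper proceeds via the spectral resolution $A_2 = \int_0^\infty \lambda\, dE_A(\lambda)$: it first observes that weak convergence forces the kernel projection $p_0\xi$ to vanish, then performs an $\epsilon$-splitting using a spectral projection $p$ onto $[\lambda_0,\infty)$, estimating the high-frequency part $\Vert e^{-tA_2}p\xi\Vert$ by exponential decay and the low-frequency part $\Vert(1-p)\xi\Vert$ by the choice of $\lambda_0$. Your route bypasses all of this by directly exploiting self-adjointness of $\Phi_t^{(2)}$ together with the semi-group law to write $\Vert \Phi_t^{(2)}\xi\Vert^2 = \langle \Phi_{2t}^{(2)}\xi,\xi\rangle$, which tends to zero by the weak hypothesis alone. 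The advantage of your argument is brevity; the paper's spectral approach, while longer, makes transparent exactly which part of the spectrum is responsible for the slow decay and would adapt more readily if one wanted quantitative rates.
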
 
\begin{proof}
	Take a spectral resolution $A_2 = \int_{0}^\infty  \lambda dE_A(\lambda)$. Let $p_0$ be the kernel projection of $A_2$ and let $p_1 = 1 - p_0$. Then $\Phi_t^{(2)} \xi \rightarrow 0$ weakly  implies that $p_0 \xi = 0$. Now let $p$ be a spectral projection of $A_2$ of an interval $[\lambda_0, \infty]$ such that $\Vert (1- p) \xi \Vert_H \leq \epsilon$. Choose $t_0 \geq 0$ such that for $t \geq t_0$ we have $\Vert \exp(-t A_2) p \xi \Vert_H \leq \epsilon$.   Then we see $\Vert   \exp(-t A_2) \xi   \Vert_H \leq  \Vert   \exp(-t A_2) p \xi   \Vert_H + \Vert   \exp(-t A_2) (1-p) \xi   \Vert_H  \leq 2 \epsilon$.
\end{proof}

\begin{lem}\label{Lem=BMOinL1}
	  Let $\cS = (\Phi_t)_{t \geq 0}$ be a $\varphi$-modular, KMS-symmetric, Markov semi-group. Consider BMO-spaces as defined above. 
	We have $\BMO_{\cS}^\circ   \subseteq L_1^\circ(\cM)$ through an extension of the embedding $x \mapsto D^{\frac{1}{2}}_{\varphi} x D^{\frac{1}{2}}_{\varphi}$. Moreover, for $x \in \cM^\circ \subseteq \BMO_{\cS}^\circ$ we have 
	\[
	\Vert D^{\frac{1}{2}}_{\varphi} x D^{\frac{1}{2}}_{\varphi} \Vert_1 \leq \Vert x \Vert_{\BMO_{\cS}^c} \qquad {\rm  and } \qquad \Vert D^{\frac{1}{2}}_{\varphi} x D^{\frac{1}{2}}_{\varphi} \Vert_1 \leq \Vert x \Vert_{\BMO_{\cS}^r}.
	\]
\end{lem}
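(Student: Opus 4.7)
The plan is to first verify the two norm inequalities pointwise on the dense subspace $\cM^\circ$ and then extend the embedding to the completion $\BMO^\circ_\cS$ by continuity. For $x \in \cM$, the non-commutative Hölder inequality applied to the two factorisations
$$D^{\frac12}_\varphi x D^{\frac12}_\varphi = D^{\frac12}_\varphi \cdot (xD^{\frac12}_\varphi) = (D^{\frac12}_\varphi x)\cdot D^{\frac12}_\varphi,$$
combined with $\|D^{\frac12}_\varphi\|_2 = 1$, $\|xD^{\frac12}_\varphi\|_2^2 = \varphi(x^*x)$, and $\|D^{\frac12}_\varphi x\|_2^2 = \varphi(xx^*)$, gives
$$\|D^{\frac12}_\varphi x D^{\frac12}_\varphi\|_1 \leq \min\bigl(\varphi(x^*x)^{\frac12},\,\varphi(xx^*)^{\frac12}\bigr).$$
It therefore suffices to show $\varphi(x^*x) \leq \|x\|_{\BMO^c_\cS}^2$ for $x \in \cM^\circ$; the row estimate follows by applying this to $x^* \in \cM^\circ$ and using that the adjoint is isometric on $L_1(\cM)$.

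For this key step, I use $\varphi \circ \Phi_t = \varphi$ to rewrite
$$\varphi(x^*x) - \varphi\bigl(\Phi_t(x)^*\Phi_t(x)\bigr) = \varphi\bigl(\Phi_t(x^*x) - \Phi_t(x)^*\Phi_t(x)\bigr) \leq \|x\|_{\BMO^c_\cS}^2,$$
the last bound because $\varphi$ is a state and the argument is a positive (Kadison--Schwarz) element of operator norm at most $\|x\|_{\BMO^c_\cS}^2$. It then remains to prove $\varphi(\Phi_t(x)^*\Phi_t(x)) \to 0$ as $t \to \infty$. By \eqref{Eqn=KMSSym} and density of $\sigma^\varphi$-analytic elements, $\Phi_t^{(2)}(xD^{\frac12}_\varphi) = \Phi_t(x)D^{\frac12}_\varphi$, so the claim is the norm vanishing of $\Phi_t^{(2)}(xD^{\frac12}_\varphi)$ in $L_2(\cM)$. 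Since $\Phi_t^{(2)}$ is a self-adjoint contraction (by KMS-symmetry), Lemma \ref{Lem=SemiGroupTechnicalThing} reduces the task to weak vanishing. On the dense subspace $\{yD^{\frac12}_\varphi : y \in \cM\}$,
$$\langle \Phi_t(x)D^{\frac12}_\varphi,\,yD^{\frac12}_\varphi\rangle_{L_2} = \varphi(y^*\Phi_t(x)) \to 0$$
by the $\sigma$-weak convergence defining $\cM^\circ$, and this extends to full weak vanishing in $L_2(\cM)$ by uniform boundedness of $\Phi_t^{(2)}$.

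Finally, the same Hölder estimate yields
$$\|\Phi_t^{(1)}(D^{\frac12}_\varphi x D^{\frac12}_\varphi)\|_1 = \|D^{\frac12}_\varphi \Phi_t(x) D^{\frac12}_\varphi\|_1 \leq \varphi(\Phi_t(x)^*\Phi_t(x))^{\frac12} \to 0,$$
so the image of $\cM^\circ$ under $x \mapsto D^{\frac12}_\varphi x D^{\frac12}_\varphi$ already lies in $L_1^\circ(\cM)$. Since $L_1^\circ(\cM)$ is closed in $L_1(\cM)$ by a standard three-epsilon argument using contractivity of $\Phi_t^{(1)}$, and the embedding is contractive with respect to both the column and the row BMO semi-norms, it extends by density to $\BMO^\circ_\cS \to L_1^\circ(\cM)$. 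I expect the principal technical step to be the middle one, namely relaying the $\sigma$-weak vanishing of $\Phi_t(x)$ in $\cM$ to $L_2$-norm vanishing of $\Phi_t^{(2)}(xD^{\frac12}_\varphi)$ via Lemma \ref{Lem=SemiGroupTechnicalThing}; once that is in place, everything else is routine.
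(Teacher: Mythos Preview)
Your proof is correct and follows essentially the same route as the paper's: establish weak $L_2$-vanishing of $\Phi_t(x)D_\varphi^{1/2}$ from the $\sigma$-weak vanishing of $\Phi_t(x)$, upgrade to norm vanishing via Lemma~\ref{Lem=SemiGroupTechnicalThing}, and then bound $\|D_\varphi^{1/2}xD_\varphi^{1/2}\|_1^2$ by $\varphi(x^*x) = \limsup_t \varphi(\Phi_t(x^*x)-\Phi_t(x)^*\Phi_t(x))$. The only cosmetic differences are that you use the direct H\"older factorisation $D_\varphi^{1/2}\cdot(xD_\varphi^{1/2})$ where the paper invokes the polar decomposition $x=u|x|$ (yours is slightly cleaner), and that you verify membership in $L_1^\circ(\cM)$ by a direct estimate rather than appealing to Lemma~\ref{Lem=NotEmbedding}.
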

\begin{proof}
	$\Phi_t^{(2)}$ is a semi-group of positive contractions on $L_2(\cM)$. Further,   $\varphi$-modularity of $\cS$ implies that $\Phi^{(2)}_t  (x D_{\varphi}^{\frac{1}{2}}) =  \Phi_t(x) D_{\varphi}^{\frac{1}{2}}$ by \eqref{Eqn=KMSSym}. 
Take $x \in \cM^\circ$ so that for $y \in \cM$
	\[
	\lim_{t \rightarrow \infty}  \langle \Phi_t(x) D_{\varphi}^{\frac{1}{2}}, y D_{\varphi}^{\frac{1}{2}} \rangle
	=
	\lim_{t \rightarrow \infty}   \varphi(y^\ast \Phi_t(x)) \rightarrow 0.
	\]
	This shows that $\Phi_t(x) D_{\varphi}^{\frac{1}{2}} \rightarrow 0$ weakly. 
	By Lemma \ref{Lem=SemiGroupTechnicalThing}  then
	$\Vert \Phi_t(x) D_{\varphi}^{\frac{1}{2}} \Vert_2  \rightarrow 0$.  Writing $x = u \vert x \vert$ for the polar decomposition we therefore see that 
	\[
	\begin{split}
	& \Vert D_{\varphi}^{\frac{1}{2}}    x   D_{\varphi}^{\frac{1}{2}} \Vert_1^2 = 
	\Vert D_{\varphi}^{\frac{1}{2}}    u \vert x \vert   D_{\varphi}^{\frac{1}{2}} \Vert_1^2 
	\leq 	\Vert D_{\varphi}^{\frac{1}{2}}    u \Vert_2^2 \Vert \vert x \vert   D_{\varphi}^{\frac{1}{2}} \Vert_2^2 \leq \Vert \vert x \vert D_{\varphi}^{\frac{1}{2}} \Vert_2^2 \\
	= &  \limsup_{t \rightarrow  \infty}   \varphi( x^\ast x ) - \Vert \Phi_t(x) D_{\varphi}^{\frac{1}{2}} \Vert_2^2 =
	\limsup_{t \rightarrow  \infty }  \varphi(  \Phi_t( x^\ast x )  -  \Phi_t(x^\ast) \Phi_t(x)  ) \\
	\leq &
	\limsup_{t \rightarrow  \infty } \Vert  \Phi_t( x^\ast x ) - \Phi_t(x^\ast) \Phi_t(x)   \Vert =  
	\sup_{t \geq 0 } \Vert  \Phi_t( x^\ast x ) - \Phi_t(x^\ast) \Phi_t(x)   \Vert   
	\end{split}
	\] 
	This shows that $\Vert D_{\varphi}^{\frac{1}{2}}    x   D_{\varphi}^{\frac{1}{2}} \Vert_1 \leq \Vert x \Vert_{\BMO^c_{\cS}}$, which yields the claim. As $\Vert D_{\varphi}^{\frac{1}{2}}    x   D_{\varphi}^{\frac{1}{2}} \Vert_1 = \Vert D_{\varphi}^{\frac{1}{2}}    x^\ast   D_{\varphi}^{\frac{1}{2}} \Vert_1$ we also get that  $\Vert D_{\varphi}^{\frac{1}{2}}    x   D_{\varphi}^{\frac{1}{2}} \Vert_1 \leq \Vert x \Vert_{\BMO^r_{\cS}}$.
	Note that $D_{\varphi}^{\frac{1}{2}}  x D_{\varphi}^{\frac{1}{2}} \in L_1^{\circ}(\cM)$ by Lemma \ref{Lem=NotEmbedding}. 
	 Then in particular, as by construction $\cM^\circ$ is dense in $\BMO_{\cS}^\circ$, we get  $\BMO_{\cS}^\circ \subseteq L_1^\circ(\cM)$ through the embedding of the lemma. 
\end{proof}

We denote the embedding extending $\cM^\circ \ni x \mapsto D_{\varphi}^{\frac{1}{2}}x D_{\varphi}^{\frac{1}{2}} \in L_1^{\circ}(\cM)$ of Lemma \ref{Lem=BMOinL1} by
\begin{equation}\label{Eqn=BMOEmbedding}
\kappa_{\BMO}^{\varphi}: \BMO_{\cS}^\circ \hookrightarrow L_1^\circ(\cM). 
\end{equation}
This shows in particular that $(\BMO_{\cS}^\circ, L_1^\circ(\cM))$ forms a compatible couple of Banach spaces.  

\begin{rmk}
	We did not consider compatible couples for the case that $\varphi$ is an arbitrary normal, semi-finite, faithful weight; neither this seems obvious. For $L_p$-spaces such interpolation structures were explored in \cite{TerpII} and \cite{Izumi}. 
\end{rmk}

We recall the following tracial theorem which we will generalize to the non-tracial setting in this paper. 

\begin{thm}[Theorem 5.2.(ii) of \cite{JungeMei}]\label{Thm=JungeMei}
	Assume that $\cM$ is finite and $\varphi$ is a normal faithful tracial state. Suppose that $\cS$ is a KMS-symmetric Markov semi-group.  Assume that $\mathcal{S}$ admits a reversed Markov dilation with  a.u. continuous path. Then for all $1 \leq p < \infty, 1 < q < \infty$ we have
	\[
	[\BMO_{\mathcal{S}}^\circ(\cM),  L_p^\circ(\cM)]_{\frac{1}{q}}^\varphi \approx_{pq} L_{pq}^\circ(\cM).
	\]
	Here $\approx_{pq}$ means complete isomorphism of operator  spaces with complete norm of the isomorphism and its inverse bounded by a constant times $pq$.
\end{thm}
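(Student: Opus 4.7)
My plan is to transfer the interpolation problem from the semi-group BMO side to a continuous-time \emph{martingale} BMO space on $\cN$, apply a known interpolation theorem for martingale BMO (Junge--Perrin) in the tracial setting, and then return to $\cM$ using a conditional expectation. The bridge between the two worlds is the reversed Markov dilation: for $x \in \cM^\circ$ set $\widetilde{x} = \pi_0(x) \in \cN_0$; the relation $\cE_s(\pi_t(x)) = \pi_s(\Phi_{s-t}(x))$ for $t<s$ gives
\[
\cE_t(\widetilde{x}) = \pi_t(\Phi_t(x)) =: m_t(x),
\]
which is exactly the reverse martingale whose a.u. continuity is assumed.

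\textbf{Step 1 (BMO identification).} A direct computation, using that $\pi_t$ is a state-preserving $*$-homomorphism, shows
\[
\cE_t\bigl(|\widetilde{x}-\cE_t(\widetilde{x})|^2\bigr) = \pi_t\bigl(\Phi_t(x^\ast x)-\Phi_t(x)^\ast\Phi_t(x)\bigr),
\]
and taking norms gives $\|\pi_0(x)\|_{\BMO^{c,\mathrm{mart}}} = \|x\|_{\BMO^{c}_{\cS}}$, with the analogous identity for the row version. Hence $\pi_0$ is an isometric embedding $\BMO^\circ_{\cS}(\cM) \hookrightarrow \BMO^{\mathrm{mart}}(\cN)$. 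Since $\pi_0$ is state preserving, it also extends to isometric embeddings $\pi_0^{(p)}:L_p(\cM)\hookrightarrow L_p(\cN)$ intertwining the $L_1$-embeddings that define the compatible couples, so the couple $(\BMO^\circ_{\cS}(\cM),L_p^\circ(\cM))$ embeds into $(\BMO^{\mathrm{mart}}(\cN),L_p(\cN))$.

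\textbf{Step 2 (Martingale interpolation).} Here I invoke the continuous-time martingale BMO interpolation theorem of Junge--Perrin in the tracial setting, $[\BMO^{\mathrm{mart}}(\cN),L_p(\cN)]_{1/q}\approx_{pq} L_{pq}(\cN)$. The a.u.\ continuity assumption enters precisely at this step: the version of the Junge--Perrin theorem that applies requires a.u.\ continuity of the martingale $m(x)$ in \eqref{Eqn=mMartingale}, and Lemma~\ref{Lem=VanishHp} then guarantees that the Hardy-$h_p^d$ contribution vanishes, which is what distinguishes the continuous-time statement from the discrete one. Combined with Step 1 this already yields the upper bound $\|\cdot\|_{L_{pq}}\lesssim pq\,\|\cdot\|_{[\BMO^\circ_{\cS},L_p^\circ]_{1/q}}$ when evaluated on $\pi_0^{(pq)}(L_{pq}(\cM))\subseteq L_{pq}(\cN)$.

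\textbf{Step 3 (Retraction via conditional expectation).} Since $\varphi$ is tracial and $\pi_0$ is trace preserving, there is a normal $\varphi_\cN$-preserving conditional expectation $\cE_{\pi_0(\cM)}:\cN\to\pi_0(\cM)$, extending to contractions on every $L_p(\cN)$ and on $\BMO^{\mathrm{mart}}(\cN)$, with $\cE_{\pi_0(\cM)}\circ\pi_0 = \pi_0$. This exhibits $(\BMO^\circ_{\cS}(\cM),L_p^\circ(\cM))$ as a complemented subcouple of $(\BMO^{\mathrm{mart}}(\cN),L_p(\cN))$, so the interpolation spaces pull back and give the reverse bound, completing the isomorphism $[\BMO^\circ_{\cS}(\cM),L_p^\circ(\cM)]_{1/q}\approx_{pq} L_{pq}^\circ(\cM)$. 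Operator space structures are transported along the completely bounded maps $\pi_0^{(\cdot)}$ and $\cE_{\pi_0(\cM)}^{(\cdot)}$, giving the complete isomorphism.

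\textbf{Main obstacle.} The crux is not the bookkeeping with $\pi_0$ and $\cE_{\pi_0(\cM)}$, which is largely formal, but matching the a.u.\ continuity hypothesis to the precise version of the continuous-time martingale BMO interpolation theorem one uses. One must check that a.u.\ continuity of $m(x)$ alone (and not of a second auxiliary martingale) suffices, as emphasized in the discussion preceding Definition~\ref{Dfn=AUCtPath}; this is exactly where Lemma~\ref{Lem=VanishHp} is the key technical input, killing the $h_p^d$-term that would otherwise appear in the upper estimate for $\BMO$ against $L_p$.
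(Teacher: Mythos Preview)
The paper does not prove this theorem; it is quoted verbatim from \cite[Theorem 5.2(ii)]{JungeMei} as the tracial input to be generalized. So there is no ``paper's own proof'' to compare against, and I will simply assess your sketch on its merits.

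Your Steps 1 and 2 are the right idea and essentially reproduce the Junge--Mei strategy: the reversed dilation identifies $\Vert x\Vert_{\BMO_{\cS}}$ with the reverse-martingale BMO norm of $\pi_0(x)$, and then one invokes the continuous-time martingale interpolation of \cite{JungePerrin}, with the a.u.\ continuity (via Lemma~\ref{Lem=VanishHp}) killing the $h_p^d$-contribution. This gives the bound $\Vert x\Vert_{L_{pq}}\lesssim pq\,\Vert x\Vert_{[\BMO^\circ_{\cS},L_p^\circ]_{1/q}}$ on the dense set $\cM^\circ$, hence everywhere.

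Step 3, however, has a genuine gap. You assert that the conditional expectation $\cE_{\pi_0(\cM)}:\cN\to\pi_0(\cM)$ is contractive on the martingale BMO space of $\cN$. For that one needs $\cE_{\pi_0(\cM)}$ to commute with each filtration expectation $\cE_t$, and there is no reason for this here: $\cE_t(\pi_0(x))=\pi_t(\Phi_t(x))$ lies in $\cN_t$, not in $\pi_0(\cM)$, so $\cE_{\pi_0(\cM)}\circ\cE_t\neq\cE_t\circ\cE_{\pi_0(\cM)}$ in general. Without this commutation the BMO-boundedness of $\cE_{\pi_0(\cM)}$ fails, and your complemented-subcouple argument collapses.

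Fortunately the reverse inequality you are after does not need any retraction on $\cN$ at all. From the Kadison--Schwarz inequality one has $\Vert x\Vert_{\BMO_{\cS}}\leq\sqrt{2}\,\Vert x\Vert_\infty$ for $x\in\cM^\circ$, so the identity map of couples $(\cM^\circ,L_p^\circ(\cM))\to(\BMO^\circ_{\cS}(\cM),L_p^\circ(\cM))$ is bounded. Interpolating and using $[\cM^\circ,L_p^\circ(\cM)]_{1/q}=L_{pq}^\circ(\cM)$ gives $\Vert x\Vert_{[\BMO^\circ_{\cS},L_p^\circ]_{1/q}}\lesssim\Vert x\Vert_{L_{pq}}$ directly. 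Replace your Step 3 by this one-line observation and the argument is complete.
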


\begin{rmk}\label{Rmk=HpCondition}
 As noted already in \cite[p. 716, after Lemma 4.1]{JungeMei}, in Theorem \ref{Thm=JungeMei} the condition that $\mathcal{S}$ has a.u. continuous path may be replaced by the weaker condition (see Lemma \ref{Lem=VanishHp}) that  there exists a $\sigma$-weakly dense subset $B \subseteq \cM$ such that for every $2 \leq p < \infty$  the martingale $m(x), x \in B$ defined in \eqref{Eqn=mMartingale} has the property that  $\Vert m(x) \Vert_{h_{p}^d} = 0$.  
\end{rmk}

\subsection{Interpolation for $\sigma$-finite BMO-spaces}\label{Sect=ReductionInt}

We explicitly record the following lemma here, which is an immediate consequence of complex interpolation, c.f. \cite{BerghLofstrom}. Recall that a subspace $Y$ of a Banach space $X$ is called 1-complemented if there is a norm 1 projection $p: X \rightarrow Y$. 

\begin{lem}\label{Lem=OneCompl}
Let $(X_1, X_2)$ be a compatible couple of Banach spaces. Let $Y_i \subseteq X_i$ be $1$-complemented subspaces. Then $(Y_1, Y_2)_{\theta}$  is a 1-complemented subspace of $(X_1, X_2)_{\theta}$.
\end{lem}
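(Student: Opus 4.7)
The strategy is to use functoriality of complex interpolation: a morphism of compatible Banach couples extends to a bounded map between the interpolation spaces with norm controlled by the endpoint norms (see \cite{BerghLofstrom}). I intend to exhibit both the inclusion $Y_1+Y_2 \hookrightarrow X_1+X_2$ and a projection $X_1+X_2 \twoheadrightarrow Y_1+Y_2$ as such morphisms.

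First I would record the set-up: by hypothesis there are norm-one projections $p_i : X_i \to Y_i$. To assemble them into a single projection on the sum (which is what interpolation requires), one needs $p_1$ and $p_2$ to agree on $X_1 \cap X_2$; this is the one point that is not purely formal, and in the paper's intended applications the projections come from a common ambient conditional-expectation-type map (or from the same algebraic projection on $L_1(\cM)$), so the compatibility will hold. Granting this, $p := p_1 \oplus p_2$ is a well-defined linear map $X_1 + X_2 \to Y_1 + Y_2$ with $\|p|_{X_i}\| \leq 1$ for $i=1,2$.

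Next I would apply the interpolation theorem to the two morphisms of couples $(Y_1,Y_2) \hookrightarrow (X_1,X_2)$ and $(X_1,X_2) \xrightarrow{p} (Y_1,Y_2)$. This yields contractions
\[
j_\theta : (Y_1,Y_2)_\theta \to (X_1,X_2)_\theta, \qquad p_\theta : (X_1,X_2)_\theta \to (Y_1,Y_2)_\theta.
\]
By construction $p \circ j$ is the identity on $Y_1+Y_2$, hence $p_\theta \circ j_\theta = \id$ on $(Y_1,Y_2)_\theta$. This forces $j_\theta$ to be an isometric embedding (it is a contraction with contractive left inverse), and then $j_\theta \circ p_\theta$ is a norm-one projection of $(X_1,X_2)_\theta$ onto the image of $j_\theta$, which is exactly what is meant by $(Y_1,Y_2)_\theta$ being $1$-complemented inside $(X_1,X_2)_\theta$.

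The main obstacle is conceptual rather than technical: verifying that the two endpoint projections are compatible on $X_1 \cap X_2$ so that they glue into a morphism of couples. Once that is in place the result is a one-line consequence of the functoriality of $[\,\cdot\,,\,\cdot\,]_\theta$. The only calculation to sanity-check is that contractivity on both endpoints of the couple $(X_1,X_2)$ really gives contractivity (not just boundedness by the maximum) at the interpolated level, which is the standard three-lines lemma content of \cite{BerghLofstrom} for complex interpolation.
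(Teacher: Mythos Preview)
Your proposal is correct and is exactly the argument the paper has in mind: the paper gives no proof beyond ``an immediate consequence of complex interpolation, c.f.\ \cite{BerghLofstrom}'', and the functoriality argument you spell out (inclusion and projection as morphisms of couples, then compose at level $\theta$) is precisely that consequence. Your observation that one must implicitly assume the endpoint projections agree on $X_1\cap X_2$ is a genuine point the paper leaves tacit; in the applications (Propositions~\ref{Prop=BMOComplementM} and~\ref{Prop=RnComplement}) the projections all come from a single conditional expectation, so compatibility holds automatically.
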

 
 Next we prove that the inclusions of BMO-spaces we need to consider in the proof of Theorem \ref{Thm=InterpolationBMO} are 1-complemented. Both proofs are based on finding Stinespring dilations of the semi-group and the conditional expectation that  `commute' in some sense, c.f. \eqref{Eqn=House} and \eqref{Eqn=CommuteE2}. 
 
\begin{prop}\label{Prop=BMOComplementM}
Let $\cS$ be a $\varphi$-modular Markov semi-group. 	We have that  $\BMO^\circ_{\mathcal{S}}(\cM)$ is an isometric 1-complemented subspace of $\BMO^\circ_{\widetilde{\mathcal{S}}}(\cR)$.
\end{prop}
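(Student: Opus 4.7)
The plan is to realise the embedding via $\pi_\varphi: \cM \hookrightarrow \cR$ and the projection via the conditional expectation $\cE_\cM: \cR \to \pi_\varphi(\cM)$ from \eqref{Eqn=CEVcross}. The two crucial intertwining identities are $\widetilde{\Phi}_t \circ \pi_\varphi = \pi_\varphi \circ \Phi_t$ (from Lemma \ref{Lem=MarkovExtension}) and $\Phi_t \circ \cE_\cM = \cE_\cM \circ \widetilde{\Phi}_t$ (which one checks on the dense spanning set $\pi_\varphi(x) l_g$, since $\widetilde{\Phi}_t$ fixes each $l_g$). These immediately show that $\pi_\varphi(\cM^\circ) \subseteq \cR^\circ$ and $\cE_\cM(\cR^\circ) \subseteq \cM^\circ$, so both maps descend to the $\BMO$-completions. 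Compatibility with the $L_1$-embeddings from \eqref{Eqn=BMOEmbedding} is a routine consequence of $\varphi$-modularity and of $\widetilde{\varphi} = \varphi \circ \cE_\cM$.

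For the isometry, since $\pi_\varphi$ is a normal $*$-homomorphism intertwining the semigroups, the Schwarz defect factors through $\pi_\varphi$:
\[
\widetilde{\Phi}_t(\pi_\varphi(x)^*\pi_\varphi(x)) - \widetilde{\Phi}_t(\pi_\varphi(x))^*\widetilde{\Phi}_t(\pi_\varphi(x)) = \pi_\varphi\!\left(\Phi_t(x^*x) - \Phi_t(x)^*\Phi_t(x)\right).
\]
Because $\pi_\varphi$ is isometric on $\cM$, taking the operator norm and supremum over $t$ gives $\|\pi_\varphi(x)\|_{\BMO^c_{\widetilde{\cS}}} = \|x\|_{\BMO^c_\cS}$; the analogous identity for the row defect yields isometry in the full $\BMO$-norm.

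For the contractivity of $\cE_\cM$, the key observation is that each automorphism $\theta_\gamma$ of the dual action commutes with $\widetilde{\Phi}_t$: on the generators, $\widetilde{\Phi}_t$ acts as $\Phi_t$ on $\pi_\varphi(\cM)$ and fixes each $l_g$, while $\theta_\gamma$ fixes $\pi_\varphi(\cM)$ and scales $l_g$ by $\langle \gamma, g\rangle$. Since $\theta_\gamma$ is a $*$-automorphism, the same Schwarz-defect identity as above gives
\[
\widetilde{\Phi}_t(\theta_\gamma(y)^*\theta_\gamma(y)) - \widetilde{\Phi}_t(\theta_\gamma(y))^*\widetilde{\Phi}_t(\theta_\gamma(y)) = \theta_\gamma\!\left(\widetilde{\Phi}_t(y^*y) - \widetilde{\Phi}_t(y)^*\widetilde{\Phi}_t(y)\right),
\]
so each $\theta_\gamma$ is an isometry of $\BMO^\circ_{\widetilde{\cS}}(\cR)$. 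Writing $\cE_\cM(y) = \int_{\widehat{\sfG}} \theta_\gamma(y)\, d\gamma$ from \eqref{Eqn=CEVcross} and using that $\|\cdot\|_{\BMO^c_{\widetilde{\cS}}}$ is a convex seminorm, Riemann sums $\tfrac{1}{n}\sum_k \theta_{\gamma_k}(y)$ satisfy $\|\cdot\|_{\BMO^c_{\widetilde{\cS}}} \leq \|y\|_{\BMO^c_{\widetilde{\cS}}}$; these converge $\sigma$-weakly to $\cE_\cM(y)$ while staying uniformly bounded in $\cR$-norm, and passing to the limit via $\sigma$-weak lower semi-continuity of the $\BMO^c_{\widetilde{\cS}}$-norm on norm-bounded sets (which in turn follows from the Stinespring formula $\|z\|_{\BMO^c,t} = \|(1 - V_tV_t^*)\pi_t(z) V_t\|$ being a supremum of $\sigma$-weakly continuous functionals) gives $\|\cE_\cM(y)\|_{\BMO^c_{\widetilde{\cS}}} \leq \|y\|_{\BMO^c_{\widetilde{\cS}}}$, and analogously for $\BMO^r$.

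Combining the two pieces, $\pi_\varphi \circ \cE_\cM$ is an idempotent on $\BMO^\circ_{\widetilde{\cS}}(\cR)$ with range $\pi_\varphi(\BMO^\circ_\cS(\cM))$, whose norm is at most the product of the norms of its factors, hence $\leq 1$; as a nontrivial projection it must equal $1$. This gives the desired isometric $1$-complemented embedding. The main obstacle is the averaging step for $\cE_\cM$: a direct Schwarz decomposition combining the defects of $\widetilde{\Phi}_t$ and $\cE_\cM$ leaves an uncontrolled residual term, so the $\theta_\gamma$-invariance argument together with semi-continuity of the $\BMO$-norm seems to be the cleanest route.
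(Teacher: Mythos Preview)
Your argument is correct and takes a genuinely different route from the paper's. Both approaches agree on the easy half (isometric embedding via $\pi_\varphi$ intertwining the semigroups), but diverge on the contractivity of $\cE_\cM$.

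The paper takes Stinespring dilations $\Phi_t(x) = V_t^*\pi_t(x)V_t$ for each $\Phi_t$, amplifies them to dilations $\widetilde{V}_t, \widetilde{\pi}_t$ of $\widetilde{\Phi}_t$, and separately constructs a Stinespring-type dilation $\widetilde{W}$ for $\cE_\cM$ out of the multiplication operators $W_\gamma$. The crux is the commutation $\widetilde{W}^H \widetilde{V}_t = (\widetilde{V}_t \otimes 1)\widetilde{W}$ between the two dilations, which lets one rewrite the Schwarz defect of $\Phi_t$ applied to $\cE_\cM(x)$ as a compression of the Schwarz defect of $\widetilde{\Phi}_t$ applied to $x$. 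This is a direct operator-algebraic computation with no limiting argument.

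Your approach instead exploits that $\cE_\cM = \int_{\widehat\sfG}\theta_\gamma\,d\gamma$ is an average of $*$-automorphisms commuting with $\widetilde{\Phi}_t$, hence of $\BMO$-isometries; convexity handles finite averages, and $\sigma$-weak lower semi-continuity of $z \mapsto \|(1 - V_tV_t^*)^{1/2}\pi_t(z)V_t\|$ (with $\pi_t$ chosen normal) handles the limit. This is more conceptual and in fact proves a general principle: any $\varphi$-preserving conditional expectation that arises as an average of automorphisms commuting with the semigroup is $\BMO$-contractive. The paper's computation, by contrast, is tailored to the specific crossed-product structure but avoids the semi-continuity lemma entirely. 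One small correction: your displayed Stinespring formula should read $\|(1 - V_tV_t^*)^{1/2}\pi_t(z)V_t\|$ rather than $\|(1 - V_tV_t^*)\pi_t(z)V_t\|$, since $1 - V_tV_t^*$ need not be a projection.
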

\begin{proof}
	As $\widetilde{\cS}$ restricts to $\cS$ on $\cM$ it follows straight from the definition of BMO-spaces that $\BMO^\circ_{\mathcal{S}}(\cM)$ is an isometric subspace of $\BMO^\circ_{\widetilde{\mathcal{S}}}(\cR)$.
	 
	We now prove that the conditional expectation $\cE_{\cM}$ provides a norm 1 projection $\BMO^\circ_{\widetilde{\mathcal{S}}}(\cR) \rightarrow \BMO^\circ_{\mathcal{S}}(\cM)$. 	
	For every $t$ we may take a Stinespring dilation for the ucp map $\Phi_t$. That is, there exist a Hilbert space $H$, a contractive map $V_t: L_2(\cM) \rightarrow H$ and a representation $\pi_t: \cM \rightarrow B(H)$ such that $\Phi_t(x) = V_t^{\ast} \pi_t ( x) V_t$. 
	We take amplifications $\widetilde{V}_t = 1_{\ell_2(\sfG)} \otimes V_t: \ell_2(\sG) \otimes L_2(\cM) \rightarrow \ell_2(\sG) \otimes H$ and $\widetilde{\pi}_t = 1_{B(\ell_2(\sG))} \otimes \pi_t$.
	Then $\widetilde{\Phi}_t(x) = \widetilde{V}_t^\ast \widetilde{\pi}_t(x) \widetilde{V}_t, x \in \cR$.

	 For $\gamma \in \widehat{\sG}$ set $W_\gamma: \ell_2(\sG) \rightarrow \ell_2(\sG)$ by $(W_\gamma \xi)(s) = \langle \gamma, s \rangle \xi(s)$. Define a partial isometry
	\[
	W: \ell_2(\sG) \rightarrow  L_2(\widehat{\sG},  \ell_2( \sG ) ): 
	\xi \mapsto \left( \gamma \mapsto W_\gamma \xi \right). 
	\] 
	We may naturally view $W$ as a map $\ell_2(\sG) \rightarrow  \ell_2(\sG) \otimes  L_2(\widehat{\sG})$. We extend this map to a map $\widetilde{W}: \ell_2(\sG) \otimes L_2(\cM) \rightarrow  \ell_2(\sG) \otimes L_2(\cM)   \otimes  L_2(\widehat{\sG})$ as $\widetilde{W} = \Sigma_{23} (W \otimes 1_{B( L_2(\cM) )})$, where $\Sigma_{23}$ flips the second and third tensor coordinate. 	 
	Then for $x \in \cR$ we get that
	\[
	\widetilde{W}^\ast ( x\otimes 1_{B( L_2(\widehat{\sG}) )})  \widetilde{W} = \int_{\gamma \in \widehat{\sG}} W_\gamma^\ast x W_\gamma d\gamma = \int_{\gamma \in \widehat{\sG}} \theta_\gamma( x )  d\gamma = \cE_{\cM}(x).
	\]
	That is, $\widetilde{W}$ is a Stinespring dilation for the conditional expectation $\cE_{\cM}$. We also set $\widetilde{W}^H = \Sigma_{23} (W \otimes 1_{ H })$ as a map $\ell_2(\sG) \otimes H \rightarrow  \ell_2(\sG) \otimes H  \otimes  L_2(\widehat{\sG})$. 	
	Note that 
	\begin{equation}\label{Eqn=House}
	\widetilde{W}^H   \widetilde{V}_t =    (\widetilde{V}_t \otimes 1_{ L_2(\widehat{\sG})  })  \widetilde{W}.
	\end{equation} 
	Further, for $x \in \cR$, 
	\begin{equation}\label{Eqn=CompDetail0}
	\widetilde{\pi}_t 
	\left(   \widetilde{W}^\ast ( x \otimes 1_{B(L_2(\widehat{\sG}))}  ) \widetilde{W}  \right) =
	(\widetilde{W}^H)^\ast ( \widetilde{\pi}_t (x) \otimes 1_{B(L_2(\widehat{\sG}))}  ) \widetilde{W}^H.
	\end{equation}
	
	Now take $x \in \cR^\circ$.
	 As $\cE_{\cM}$ is given by \eqref{Eqn=CEVcross} and $\theta_{\gamma}$ commutes with $\widetilde{\Phi}_t$,  we get that, 
	\begin{equation}\label{Eqn=CompDetail}
	\begin{split}
	&     \Phi_t\left(   \cE_{\cM}(x)^\ast   \cE_{\cM}(x)  \right)  -
	\Phi_t\left( \cE_{\cM}(x) \right)^\ast  \Phi_t\left( \cE_{\cM}(x)  \right) 		\\
	=&     \Phi_t\left( \cE_{\cM}(x)^\ast \cE_{\cM}(x)  \right)  -
	\cE_{\cM}\left( \widetilde{\Phi}_t\left( x \right)^\ast\right)   \cE_{\cM}\left(  \widetilde{\Phi}_t\left( x \right) \right). 
	\end{split}
	\end{equation}
	Now using the Stinespring dilations and \eqref{Eqn=CompDetail0}
	\[
	\begin{split}	 
	\eqref{Eqn=CompDetail} = &  \widetilde{V}_t^\ast  \widetilde{\pi}_t 
\left(   \widetilde{W}^\ast ( x^\ast \otimes 1_{B(L_2(\widehat{\sG}))}  ) \widetilde{W}  \widetilde{W}^\ast (  x  \otimes 1_{B(L_2(  \widehat{\sG} ))}  ) \widetilde{W} \right) \widetilde{V}_t   \\
	& -    \widetilde{W}^\ast (\widetilde{V}_t^\ast   \widetilde{\pi}_t(x)^\ast \widetilde{V}_t \otimes 1_{B( L_2(\widehat{\sG} )) }  )    \widetilde{W}    \widetilde{W}^\ast   ( \widetilde{V}_t^\ast  \widetilde{\pi}_t(x) \widetilde{V}_t  \otimes 1_{ B(L_2(\widehat{\sG} ) )}  )   \widetilde{W}   \\
	= &  \widetilde{V}_t^\ast     (\widetilde{W}^H)^\ast ( \widetilde{\pi}_t (x)^\ast \otimes 1_{B(L_2(\widehat{\sG}))}  ) \widetilde{W}^H  (\widetilde{W}^H)^\ast (  \widetilde{\pi}_t(x)  \otimes 1_{ B( L_2(  \widehat{\sG} ) )}  ) \widetilde{W}^H   \widetilde{V}_t   \\
	& -    \widetilde{W}^\ast (\widetilde{V}_t^\ast   \widetilde{\pi}_t(x)^\ast \widetilde{V}_t \otimes 1_{ B( L_2(\widehat{\sG} ))}  )    \widetilde{W}    \widetilde{W}^\ast   ( \widetilde{V}_t^\ast  \widetilde{\pi}_t(x) \widetilde{V}_t  \otimes 1_{ B( L_2(\widehat{\sG}) )}  )   \widetilde{W}   
	\end{split}
	\]
	Finally we use \eqref{Eqn=House} to find,
	\[
	\begin{split}
		\eqref{Eqn=CompDetail} = & \widetilde{W}^\ast  (\widetilde{V}_t^\ast \widetilde{\pi}_t(x)^\ast (\widetilde{V}_t \widetilde{V}_t^\ast  - 1 )^{\frac{1}{2}}  \otimes 1_{ B( L_2(\widehat{\sG})) }  )   	  
	\widetilde{W}^H \\ & \qquad \times \quad (\widetilde{W}^H)^\ast    
	( (\widetilde{V}_t \widetilde{V}_t^\ast  - 1 )^{\frac{1}{2}} \widetilde{\pi}_t(x) \widetilde{V}_t     \otimes 1_{ B(L_2(\widehat{\sG})) }  )
	\widetilde{W}. 
	\end{split}
	\]
	So that
	\[
	\begin{split}
	&	  \Vert \Phi_t\left( \cE_{\cM}(x)^\ast \cE_{\cM}(x)  \right)  -
	\Phi_t\left( \cE_{\cM}(x) \right)^\ast  \Phi_t\left( \cE_{\cM}(x)  \right) \Vert \\
	\leq &  \Vert     \widetilde{V}_t^\ast \widetilde{\pi}_t(x)^\ast (\widetilde{V}_t \widetilde{V}_t^\ast  - 1 ) \widetilde{\pi}_t(x) \widetilde{V}_t   \otimes 1_{ B(L_2(\widehat{\sG} ))}    \Vert \\
	= &  \Vert     \widetilde{V}_t^\ast \widetilde{\pi}_t(x)^\ast (\widetilde{V}_t \widetilde{V}_t^\ast  - 1 ) \widetilde{\pi}_t(x) \widetilde{V}_t       \Vert 
	=    \Vert \widetilde{\Phi}_t\left( x^\ast x  \right)  -
	\widetilde{\Phi}_t\left( x \right)^\ast  \widetilde{\Phi}_t\left( x \right) \Vert.
	\end{split}
	\] 
	Taking the supremum over $t \geq 0$ we get that $\Vert \cE_\cM(x) \Vert_{\BMO_{\cS}^c} \leq \Vert x \Vert_{\BMO_{\widetilde{\cS}}^c}$. 
	By taking adjoints  we get the row estimate.
	By density of $\cM^\circ$ in $\BMO_{\widetilde{\cS}}(\cR)$ we conclude the proof. 
\end{proof}

\begin{prop}\label{Prop=RnComplement}
	Suppose that $\cS$ admits a $\varphi$-modular standard or reversed Markov-dilation. 
	We have that  $\BMO^\circ_{\widetilde{\mathcal{S}}}(\cR_n)$ is an isometric 1-complemented subspace of $\BMO^\circ_{\widetilde{\mathcal{S}}}(\cR)$.
\end{prop}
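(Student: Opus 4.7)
The plan is to mirror the structure of Proposition \ref{Prop=BMOComplementM}, but where that proof exploited that $\cE_\cM$ averages over the dual $\widehat{\sfG}$-action on a separate $\ell_2(\sfG)$-coordinate, here I would exploit that $\cE_n$ averages over the (periodic) modular automorphism group $\sigma^{\widetilde{\varphi}_n}$, which acts on the same Hilbert space $L_2(\cR)$ but is intertwined with the Stinespring dilation of $\widetilde{\Phi}_t$.

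First I would establish the isometric embedding. Combining Lemma \ref{Lem=MarkovExtension} (so that $\widetilde{\Phi}_t$ commutes with $\sigma^{\widetilde{\varphi}}$) with \eqref{Eqn=HCommute} (so that $\widetilde{\Phi}_t$ commutes with $\mathrm{Ad}(h_n^{is})$), and the cocycle identity $\sigma_s^{\widetilde{\varphi}_n} = \mathrm{Ad}(h_n^{is}) \circ \sigma_s^{\widetilde{\varphi}}$, one obtains $\widetilde{\Phi}_t \circ \sigma_s^{\widetilde{\varphi}_n} = \sigma_s^{\widetilde{\varphi}_n} \circ \widetilde{\Phi}_t$. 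In particular $\widetilde{\Phi}_t$ preserves the centralizer $\cR_n$, so the BMO-seminorm on $\cR_n$ is simply the restriction of the BMO-seminorm on $\cR$, giving the claimed isometric inclusion (and its compatibility with $L_1^\circ$ via \eqref{Eqn=BMOEmbedding}).

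Second, I would recall that by the Haagerup reduction construction, $\widetilde{\varphi}_n$ is a periodic state: its modular group $\sigma^{\widetilde{\varphi}_n}$ has a period $T_n > 0$, and the conditional expectation $\cE_n$ of Theorem \ref{Thm=Reduction} is realized by the averaging
\[
\cE_n(x) = \frac{1}{T_n}\int_0^{T_n} \sigma_s^{\widetilde{\varphi}_n}(x)\, ds, \qquad x \in \cR,
\]
understood as a Bochner (or $\sigma$-weak) integral. In particular $\cE_n$ commutes with $\widetilde{\Phi}_t$ by step one.

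Third, I would prove the norm-one projection property. Fix $x \in \cR^\circ$ and $t \geq 0$, and take a Stinespring decomposition $\widetilde{\Phi}_t(\cdot) = V_t^* \pi_t(\cdot) V_t$ with $V_t : L_2(\cR) \to K_t$ an isometry and $\pi_t$ a normal $\ast$-representation; set $P = 1 - V_t V_t^*$. A direct computation (as in the proof of Proposition \ref{Prop=BMOComplementM}) gives, for $y = \cE_n(x)$,
\[
\widetilde{\Phi}_t(y^* y) - \widetilde{\Phi}_t(y)^* \widetilde{\Phi}_t(y) = (P\pi_t(y) V_t)^* (P\pi_t(y) V_t).
\]
Using normality of $\pi_t$ to move it under the Bochner integral,
\[
P\pi_t(y)V_t = \frac{1}{T_n}\int_0^{T_n} P\pi_t\bigl(\sigma_s^{\widetilde{\varphi}_n}(x)\bigr) V_t\, ds,
\]
and the triangle inequality with normalized measure yields
\[
\|P\pi_t(y) V_t\| \leq \sup_{s \in [0,T_n]} \|P\pi_t(\sigma_s^{\widetilde{\varphi}_n}(x)) V_t\|.
\]
But each integrand squared equals $\|\widetilde{\Phi}_t(\sigma_s^{\widetilde{\varphi}_n}(x)^* \sigma_s^{\widetilde{\varphi}_n}(x)) - \widetilde{\Phi}_t(\sigma_s^{\widetilde{\varphi}_n}(x))^* \widetilde{\Phi}_t(\sigma_s^{\widetilde{\varphi}_n}(x))\|$, which by commutation of $\widetilde{\Phi}_t$ with the $\ast$-automorphism $\sigma_s^{\widetilde{\varphi}_n}$ equals $\|\widetilde{\Phi}_t(x^*x) - \widetilde{\Phi}_t(x)^*\widetilde{\Phi}_t(x)\|$, independent of $s$. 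Taking the supremum over $t$ gives $\|\cE_n(x)\|_{\BMO_{\widetilde{\cS}}^c} \leq \|x\|_{\BMO_{\widetilde{\cS}}^c}$; the row estimate follows from $\cE_n(x)^* = \cE_n(x^*)$. Normality of $\cE_n$ ensures that $\cR^\circ$ is mapped into $\cR_n^\circ$, and density of $\cR^\circ$ in $\BMO^\circ_{\widetilde{\cS}}(\cR)$ allows one to conclude.

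The main obstacle I anticipate is the averaging representation of $\cE_n$: one must be careful in using the periodicity of $\widetilde{\varphi}_n$ (which is the whole purpose of the Haagerup reduction and is slightly technical to extract from Theorem \ref{Thm=Reduction}), and in justifying that $\pi_t$ can be pulled inside the Bochner integral. Once this setup is in place, the Stinespring / triangle-inequality argument in Step 3 is essentially forced and is considerably cleaner than the coordinate-based calculation in Proposition \ref{Prop=BMOComplementM}.
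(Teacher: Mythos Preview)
Your proof is correct and takes a genuinely different, more elementary route than the paper's. The paper uses the assumed Markov dilation: it extends $(\cN_t,\pi_t,\cE_t)$ to a dilation $(\cO_t,\widetilde{\pi}_t,\widetilde{\cE}_t)$ of $\widetilde{\cS}$ via Proposition~\ref{Prop=SExtension}, lifts $\cF_n$ to a conditional expectation $\cF_n^{\cO}$ on $\cO$, and then exploits that the $L_2$-projections implementing $\widetilde{\cE}_{2t}$ and $\cF_n^{\cO}$ \emph{commute} to derive the operator inequality; the whole point of the Markov dilation here is to trade the ucp map $\widetilde{\Phi}_t$ for a genuine conditional expectation so that both maps in play are projections. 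You instead keep the Stinespring dilation of each $\widetilde{\Phi}_t$ and use the specific structure of $\cE_n$ as an average $T_n^{-1}\int_0^{T_n}\sigma_s^{\widetilde{\varphi}_n}\,ds$ over \emph{automorphisms} commuting with $\widetilde{\Phi}_t$: the triangle inequality inside the Stinespring picture plus the fact that automorphisms preserve the BMO integrand then finish the job. The payoff is that your argument never uses the Markov-dilation hypothesis at all, so you have in fact established the proposition under the weaker standing assumption that $\cS$ is merely $\varphi$-modular Markov. The paper's approach, in return, would apply to any conditional expectation commuting with the semi-group that lifts to the dilated level, without needing the averaging-over-automorphisms form; but for $\cE_n$ as it actually arises in Haagerup reduction, your method is both shorter and sharper.
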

\begin{proof}
	The conditional expectation of $\cR$ onto $\cR_n$ is given by  
	\begin{equation}\label{Eqn=CEVRn}
	\cF_{n} (x) =  2^n \int_0^{2^{-n}} \sigma_s^{\widetilde{\varphi}_n}(x) ds, \qquad x \in \cR, 
	\end{equation}
	see \cite{HaagerupJungeXu}. 
	Let $(\cN_t, \pi_t, \cE_t)_{t \geq 0}$ be a $\varphi$-modular Markov dilation for $\cS$ to a von Neumann algebra $\cN$ with normal faithful state $\psi$. By Proposition \ref{Prop=SExtension}  we see that $\widetilde{\cS}$ admits a $\widetilde{\varphi}$-modular Markov dilation $(\cO_t, \widetilde{\pi}_t, \widetilde{\mathcal{E}}_t)_{t \geq 0}$. Moreover, the proof shows that we may take $\cO_t = \cN_t \rtimes_{\sigma^\psi} \sfG$,  $\widetilde{\cE}_t = \cE_t \rtimes_{\sigma^\psi} \sfG$ and $\widetilde{\pi}_t = \pi_t \rtimes \sfG$. Let $k_n$ be the element in $\cO = \cup_{t \geq 0} \cO_t$ that satisfies  $\forall t \geq 0, \widetilde{\pi}_t(h_n) = k_n$   (formally, it is defined as follows: let $d_n = -i \log(\lambda_{2^{-n}}) \in \cO$ (principal branch of the log), then set $e_n = 2^n d_n$ and $k_n = e^{-d_n}$).  Set $\widetilde{\psi}_n = k_n  \widetilde{\psi}    k_n$. 
	The conditional expectation \eqref{Eqn=CEVRn} may be lifted to the $\cO$-level by setting
	\[
	\cF_n^{\cO}(x) = 2^n \int_0^{2^{-n}} \sigma_s^{\widetilde{\psi}_n}(x) ds, \qquad x \in \cO, 
	\]
	We get for $x \in \cR, t \geq 0$ that 
	\begin{equation}\label{Eqn=CommuteE1}
	\begin{split}
	\cF_n^{\cO} \circ \widetilde{\pi}_t(x) = &
	 2^n \int_0^{2^{-n}} \sigma_s^{\widetilde{\psi}_n}( \widetilde{\pi}_t(x)  ) ds 
	 =   2^n \int_0^{2^{-n}} k_n^{it} \sigma_s^{\widetilde{\psi}}( \widetilde{ \pi}_t(x)  )k_n^{-it} ds  \\
	 = &  2^n \int_0^{2^{-n}} \widetilde{ \pi}_t \left( h_n^{it} \sigma_s^{\widetilde{\varphi}}( x  )h_n^{-it}  \right) ds  
	 =   \widetilde{ \pi}_t  \circ  	\cF_n(x).
	 \end{split}
	\end{equation}
	Recall that $\widetilde{\cE}_t$ commutes with $\sigma^{\widetilde{\psi}}_s$ and has $k_n$ in it range so that $\widetilde{\cE}_t(k_n^{is} x k_n^{-is}) = k_n^{is}  \widetilde{\cE}_t(x )k_n^{-is}$. Essentially the same computation as before shows that for  $x \in \cO$ we get that
	\begin{equation}\label{Eqn=CommuteE2}
	\begin{split}
	 \cF_n^{\mathcal{O}}  \circ \widetilde{\cE}_t (x) = &
	2^n \int_0^{2^{-n}} \sigma_s^{\widetilde{\psi}_n}(  \widetilde{\cE}_t(x)  ) ds  
	=   2^n \int_0^{2^{-n}} k_n^{is} \sigma_s^{\widetilde{\psi}}(   \widetilde{\cE}_t(x)  )  )k_n^{-is} ds  \\
	= &  2^n \int_0^{2^{-n}}  \widetilde{\cE}_t\left( k_n^{is} \sigma_s^{\widetilde{\psi}}( x  )k_n^{-is}  \right) ds  
	=   \widetilde{\cE}_t \circ \cF_n^{\cO} (x).
	\end{split}
	\end{equation}
	For $x \in \cR, t \geq 0$ we get using  \eqref{Eqn=MarkovDilationRelation} in the second equality and \eqref{Eqn=CommuteE2} in the third,
\begin{equation} \label{Eqn=ExpectedBMO}
	\begin{split}
      &
	\Vert \widetilde{\Phi}_{t}(\cF_n(x)^\ast \cF_n(x) ) - \widetilde{\Phi}_t(\cF_n(x))^\ast \widetilde{\Phi}_t(\cF_n(x) ) \Vert \\
	= &  \Vert \widetilde{\pi}_{2t}\left( \widetilde{\Phi}_t(\cF_n(x)^\ast \cF_n(x) ) - \widetilde{\Phi}_t(\cF_n(x))^\ast \widetilde{\Phi}_t(\cF_n(x) ) \right) \Vert \\
	= &  \Vert  \widetilde{\cE}_{2t} \left( \widetilde{\pi}_t(\cF_n(x)^\ast \cF_n(x) ) \right) -   \widetilde{\cE}_{2t} ( \widetilde{\pi}_t( \cF_n(x)))^\ast  \widetilde{\cE}_{ 2t}  ( \widetilde{\pi}_t(\cF_n(x) ))   \Vert \\
= &  \Vert  \widetilde{\cE}_{2t} \left(  \cF_n^{\mathcal{O}}(\widetilde{\pi}_t(x)^\ast) \cF_n^{\mathcal{O}}(\widetilde{\pi}_t(x))  \right) - \widetilde{\cE}_{2t} ( \cF_n^{\mathcal{O}}( \widetilde{\pi}_t( x))  )^\ast  \widetilde{\cE}_{ 2t}( \cF_n^{\mathcal{O}}( \widetilde{\pi}_t( x )  ) )  \Vert.
\end{split}
\end{equation}
Next write $P_{2t}$ and $P_n^{\mathcal{O}}$ for the $L_2$-implementation of $\widetilde{\cE}_{2t}$ and  $\cF_{n}^{\mathcal{O}}$ respectively.  Then  $P_{2t}$ and $P_n^{\mathcal{O}}$ are commuting projections and 
$\widetilde{\cE}_{2t}(x) = P_{2t}x P_{2t}$, $\cF_{n}^{\mathcal{O}}(x) = P_n^{\mathcal{O}} x P_n^{\mathcal{O}}$.
Therefore we may estimate \eqref{Eqn=ExpectedBMO} as 
\[
\begin{split}
  &
\Vert \widetilde{\Phi}_{t}(\cF_n(x)^\ast \cF_n(x) ) - \widetilde{\Phi}_t(\cF_n(x))^\ast \widetilde{\Phi}_t(\cF_n(x) ) \Vert \\
=	& \Vert  P_{2t} P_{n}^{\mathcal{O}} \widetilde{\pi}_t( x )^\ast  P_{n}^{\mathcal{O}} \widetilde{\pi}_t( x )    P_{n}^{\mathcal{O}}   P_{2t}  -    P_{2t} P_{n}^{\mathcal{O}} \widetilde{\pi}_t( x )^\ast  P_{n}^{\mathcal{O}}    P_{2t}   P_{n}^{\mathcal{O}}   \widetilde{\pi}_t( x )    P_{n}^{\mathcal{O}}   P_{2t}  \Vert \\
	= &  \Vert P_{n}^{\mathcal{O}}  P_{2t}  \widetilde{\pi}_t( x )^\ast    (1 - P_{2t}) P_{n}^{\mathcal{O}} (1 - P_{2t}) \widetilde{\pi}_t( x )       P_{2t}  P_{n}^{\mathcal{O}}   \Vert \\
	\leq & \Vert   P_{2t}  \widetilde{\pi}_t( x )^\ast    (1 - P_{2t})   \widetilde{\pi}_t( x )       P_{2t}    \Vert. 
	\end{split}
	\]
	By the same computation replacing $\cF_n(x)$ by just $x$ one gets that
	\[ 
	\Vert \widetilde{\Phi}_{t}( x^\ast x ) - \widetilde{\Phi}_t( x )^\ast \widetilde{\Phi}_t( x ) \Vert =  \Vert   P_{2t}  \widetilde{\pi}_t( x )^\ast    (1 - P_{2t})   \widetilde{\pi}_t( x )       P_{2t}    \Vert.
	\]
	So that in all we conclude that 
	\[
	\Vert \widetilde{\Phi}_{t}(\cF_n(x)^\ast \cF_n(x) ) - \widetilde{\Phi}_t(\cF_n(x))^\ast \widetilde{\Phi}_t(\cF_n(x) ) \Vert \leq 
	 \Vert \widetilde{\Phi}_{t}( x^\ast x ) - \widetilde{\Phi}_t( x )^\ast \widetilde{\Phi}_t( x ) \Vert
	\]
	Taking the supremum over all $t \geq 0$ gives $\Vert \cF_n(x) \Vert_{\BMO_{\widetilde{\cS}}^c(\cR_n) } \leq  \Vert x \Vert_{\BMO_{\widetilde{\cS}}^c(\cR) }$, which concludes the proof for the column estimate. The row estimate follows by taking adjoints.  
\end{proof}

Let $\widetilde{\Phi}_t^{(p)}$ and $\widetilde{\Phi}_t^{(p,n)}$ be the semi-groups acting on $L_p(\cR)$ through interpolation with respect to $\widetilde{\varphi}$ and $\widetilde{\varphi}_n$, see \eqref{Eqn=PhiLp}. Note that the definition of the subspace $L_p^\circ(\cR)$ of $L_p(\cR)$ depends on the choice of the state. As we are dealing with different states, namely $\widetilde{\varphi}$ and $\widetilde{\varphi}_n$ these spaces may in principle be different. We distinghuish this in the notation by writing $L_p^\circ(\cR, \widetilde{\varphi})$ and $L_p^\circ(\cR, \widetilde{\varphi}_n)$. The following proposition shows that the spaces are equal however, so that after it we continue writing  $L_p^\circ(\cR)$.

\begin{prop}\label{Prop=LpIso}
Using the notation introduced before Theorem \ref{Thm=Reduction}. Let $1 \leq p < \infty$. We have
\begin{equation}\label{Eqn=Dvergleich}
D_{\varphi}^{\frac{1}{2p}}   =  h^{-\frac{1}{2p}}_n D_{\varphi_n}^{\frac{1}{2p}} =  D_{\varphi_n}^{\frac{1}{2p}} h^{-\frac{1}{2p}}_n.
\end{equation}
Furthermore, we have for $y \in \cR$, 
\begin{equation}\label{Eqn=LpIsoInter}
\kappa_{p}^{\widetilde{\varphi}_n}(y)   =  h^{\frac{1}{2} -\frac{1}{2p}}_n \kappa_{p}^{\widetilde{\varphi}}(y) h^{\frac{1}{2} -\frac{1}{2p}}_n.
\end{equation}
We have $\widetilde{\Phi}_t^{(p,n)} = \widetilde{\Phi}_t^{(p)}$ so that in particular $L_p^\circ(\cR, \widetilde{\varphi}) = L_p^\circ(\cR, \widetilde{\varphi}_n)$. 
The same statements hold if $\cR$ is replaced by $\cR_n$.
\end{prop}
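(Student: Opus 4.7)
The plan is to exploit, as the only real input, two facts already established in Section 3.1: that $h_n$ lies in the centralizer of $\widetilde{\varphi}$ (by \eqref{Eqn=Varphi}) and that $h_n$ lies in the two-sided multiplicative domain of $\widetilde{\Phi}_t$ (as shown in the proof of Lemma \ref{Lem=MarkovExtension}). Throughout I read the states $\varphi,\varphi_n$ appearing in \eqref{Eqn=Dvergleich}--\eqref{Eqn=LpIsoInter} as $\widetilde{\varphi},\widetilde{\varphi}_n$, consistent with the notation introduced before Theorem \ref{Thm=Reduction}.

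Identity \eqref{Eqn=Dvergleich} is the starting point. Since $\widetilde{\varphi}_n = h_n^{1/2}\widetilde{\varphi}h_n^{1/2}$ and $h_n$ strongly commutes with $D_{\widetilde{\varphi}}$ by \eqref{Eqn=Varphi}, the standard spatial-derivative computation yields $D_{\widetilde{\varphi}_n} = h_n D_{\widetilde{\varphi}} = D_{\widetilde{\varphi}}h_n$ as positive self-adjoint operators affiliated to $\cR$, and taking $(2p)$-th roots via Borel functional calculus gives \eqref{Eqn=Dvergleich}. For \eqref{Eqn=LpIsoInter}, I would use \eqref{Eqn=Dvergleich} to rewrite $\xi := D_{\widetilde{\varphi}}^{1/(2p)} y D_{\widetilde{\varphi}}^{1/(2p)}$ as
\[
\xi = D_{\widetilde{\varphi}_n}^{1/(2p)}\bigl(h_n^{-1/(2p)} y h_n^{-1/(2p)}\bigr) D_{\widetilde{\varphi}_n}^{1/(2p)},
\]
so that the right-hand side is a $\widetilde{\varphi}_n$-representative of the same $L_p$-element. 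Applying $\kappa_p^{\widetilde{\varphi}}$ and $\kappa_p^{\widetilde{\varphi}_n}$ to these two representatives and then using \eqref{Eqn=Dvergleich} at exponent $1/2$, together with the commutation of $h_n$ with $D_{\widetilde{\varphi}}^{1/2}$, collapses $\kappa_p^{\widetilde{\varphi}_n}(\xi)$ to $h_n^{1/2-1/(2p)}\kappa_p^{\widetilde{\varphi}}(\xi)h_n^{1/2-1/(2p)}$, which is \eqref{Eqn=LpIsoInter}.

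The key step is showing $\widetilde{\Phi}_t^{(p,n)} = \widetilde{\Phi}_t^{(p)}$. Here I would first upgrade \eqref{Eqn=HCommute} from imaginary powers $h_n^{it}$ to arbitrary bounded Borel functions of $h_n$. Since the multiplicative domain of the normal u.c.p.\ map $\widetilde{\Phi}_t$ is a von Neumann subalgebra of $\cR$ containing $h_n$, it contains the entire von Neumann algebra $\{h_n\}''$, so by the Choi--Effros characterization $\widetilde{\Phi}_t(f(h_n) z g(h_n)) = f(h_n)\widetilde{\Phi}_t(z)g(h_n)$ for all $z \in \cR$ and bounded Borel $f,g$. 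Specializing to $f(h_n) = g(h_n) = h_n^{-1/(2p)}$ and applying this to the $\widetilde{\varphi}_n$-representative of $\xi$ above, a direct computation using \eqref{Eqn=Dvergleich} one more time gives
\[
\widetilde{\Phi}_t^{(p,n)}(\xi) = D_{\widetilde{\varphi}}^{1/(2p)}\widetilde{\Phi}_t(y) D_{\widetilde{\varphi}}^{1/(2p)} = \widetilde{\Phi}_t^{(p)}(\xi)
\]
on the dense subspace $D_{\widetilde{\varphi}}^{1/(2p)} \cR D_{\widetilde{\varphi}}^{1/(2p)} \subseteq L_p(\cR)$; by continuity the equality extends to all of $L_p(\cR)$. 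Equality of the semi-groups immediately identifies the subspaces $L_p^\circ(\cR, \widetilde{\varphi})$ and $L_p^\circ(\cR, \widetilde{\varphi}_n)$.

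Finally, the statements with $\cR$ replaced by $\cR_n$ follow by restriction: $h_n \in \cR_n$ since $\sigma_s^{\widetilde{\varphi}_n}(h_n)=h_n$, and $\widetilde{\Phi}_t$ is $\widetilde{\varphi}_n$-modular because $\sigma_s^{\widetilde{\varphi}_n} = \Ad(h_n^{is})\circ\sigma_s^{\widetilde{\varphi}}$ and $\widetilde{\Phi}_t$ commutes with each factor (the first by the multiplicative domain argument, the second by Lemma \ref{Lem=MarkovExtension}); hence $\widetilde{\Phi}_t$ restricts to a Markov semi-group on $\cR_n$ and the three preceding steps go through verbatim. The main obstacle in all of this is the bookkeeping of Step 3, specifically the passage from \eqref{Eqn=HCommute} to the real-power commutation $\widetilde{\Phi}_t(h_n^{-1/(2p)} z h_n^{-1/(2p)}) = h_n^{-1/(2p)} \widetilde{\Phi}_t(z) h_n^{-1/(2p)}$; but as indicated this reduces to invoking the multiplicative-domain argument already carried out in Lemma \ref{Lem=MarkovExtension}.
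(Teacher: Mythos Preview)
Your proof is correct and follows essentially the same route as the paper's: both arguments derive \eqref{Eqn=Dvergleich} from the standard spatial-derivative identity $D_{\widetilde{\varphi}_n}=h_n D_{\widetilde{\varphi}}$, obtain \eqref{Eqn=LpIsoInter} by a direct computation rewriting a $\widetilde{\varphi}$-representative of an $L_p$-element as a $\widetilde{\varphi}_n$-representative via \eqref{Eqn=Dvergleich}, and prove $\widetilde{\Phi}_t^{(p,n)}=\widetilde{\Phi}_t^{(p)}$ by commuting $h_n^{-1/(2p)}$ through $\widetilde{\Phi}_t$ using the multiplicative-domain fact from Lemma \ref{Lem=MarkovExtension}. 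Your treatment is slightly more explicit than the paper's in spelling out why real powers of $h_n$ (and not just imaginary ones) lie in the multiplicative domain, but the underlying mechanism is the same.
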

\begin{proof}
	\eqref{Eqn=Dvergleich} is an elementary property of spatial derivatives, see \cite[Section III]{TerpI}. \eqref{Eqn=LpIsoInter}  follows as for $y = D_{\widetilde{\varphi}}^{\frac{1}{2p}}x D_{\widetilde{\varphi}}^{\frac{1}{2p}}, x \in \cR$ we get 
	\[
	\begin{split}
	&  \kappa_{p}^{\widetilde{\varphi}}(D_{\widetilde{\varphi}}^{\frac{1}{2p}} x D_{\widetilde{\varphi}}^{\frac{1}{2p}}  ) = 
D_{\widetilde{\varphi}}^{\frac{1}{2}} x D_{\widetilde{\varphi}}^{\frac{1}{2}}
= h^{-\frac{1}{2}}_n D_{\widetilde{\varphi}_n}^{\frac{1}{2}} x D_{\widetilde{\varphi}_n}^{\frac{1}{2}} h^{-\frac{1}{2}}_n
 = 
 h^{-\frac{1}{2}+\frac{1}{2p}}_n
 \kappa_{p}^{\widetilde{\varphi}_n}(  
 y  ) h^{-\frac{1}{2} + \frac{1}{2p}}_n.
 \end{split}
	\] 
		By using the definitions and Lemma \ref{Lem=MarkovExtension} we see that for $x \in \cR$ we get
	 	 \[	 
	 \begin{split}
		&  \widetilde{\Phi}_t^{(p,n)}(  D_{\widetilde{\varphi}}^{\frac{1}{2p}} x D_{\widetilde{\varphi}}^{\frac{1}{2p}} )
		= \widetilde{\Phi}_t^{(p,n)}(  D_{\widetilde{\varphi}_n}^{\frac{1}{2p}} h_n^{-\frac{1}{2p}} x h_n^{-\frac{1}{2p}} D_{\widetilde{\varphi}_n}^{\frac{1}{2p}} ) 	
		=   D_{\widetilde{\varphi}_n}^{\frac{1}{2p}} \widetilde{\Phi}_t^{(p)}( h_n^{-\frac{1}{2p}} x h_n^{-\frac{1}{2p}}) D_{\widetilde{\varphi}_n}^{\frac{1}{2p}} \\
		= &	D_{\widetilde{\varphi}_n}^{\frac{1}{2p}} h_n^{-\frac{1}{2p}} \widetilde{\Phi}_t^{(p)}(  x ) h_n^{-\frac{1}{2p}} D_{\widetilde{\varphi}_n}^{\frac{1}{2p}} 
		= D_{\widetilde{\varphi}}^{\frac{1}{2p}}  \widetilde{\Phi}_t^{(p)}(  x )  D_{\widetilde{\varphi}}^{\frac{1}{2p}}
		=  \widetilde{\Phi}_t^{(p)}(  D_{ \widetilde{\varphi} }^{\frac{1}{2p}} x D_{ \widetilde{\varphi} }^{\frac{1}{2p}} ).
	 \end{split}
	 \]
	 This shows by density that $\widetilde{\Phi}_t^{(p,n)} = \widetilde{\Phi}_t^{(p)}$. 
\end{proof}

Now consider compatible couples $[\BMO^\circ_{\widetilde{\cS}}(\cR), L_p^\circ(\cR)]^{\widetilde{\varphi}}$ and $[\BMO^\circ_{\widetilde{\cS}}(\cR), L_p^\circ(\cR)]^{\widetilde{\varphi}_n}$ with respect to respective states $\widetilde{\varphi}$ and $\widetilde{\varphi}_n$. Note that $\cR^\circ$ is by definition contained in $\BMO^\circ_{\widetilde{\cS}}(\cR)$.
 Let  $\kappa_{[\BMO,p;q] }^{\widetilde{\varphi}}$  and $\kappa_{[\BMO,p;q] }^{\widetilde{\varphi}_n}$   be the respective natural identifications of $[\BMO^\circ_{\widetilde{\cS}}(\cR), L_p^\circ(\cR)]_{1/q}^{\widetilde{\varphi}}$  and $[\BMO^\circ_{\widetilde{\cS}}(\cR), L_p^\circ(\cR)]_{1/q}^{\widetilde{\varphi}_n}
$  as subspaces of $L_1(\cR)$. 

\begin{prop}\label{Prop=IntIso}
Let $\cS$ be a $\varphi$-modular KMS-symmetric semi-group. We have a complete isometry
\begin{equation}\label{Eqn=InterpolationIso0}
\begin{split}
\sigma_{p,q,n}: & [\BMO^\circ_{\widetilde{\cS}}(\cR), L_p^\circ(\cR)]_{1/q}^{\widetilde{\varphi}} \rightarrow
[\BMO^\circ_{\widetilde{\cS}}(\cR), L_p^\circ(\cR)]_{1/q}^{\widetilde{\varphi}_n}
\end{split}
\end{equation}
Moreover, the isometry is explicitly given by
\[
\kappa_{[\BMO,p;q] }^{\widetilde{\varphi}_n} \circ \sigma_{p,q,n}   (y) 
   =  
    h_n^{\frac{1}{2} - \frac{1}{2pq}}   \kappa_{[\BMO,p;q] }^{\widetilde{\varphi}} (y)  h_n^{\frac{1}{2} - \frac{1}{2pq}}.
\]
\end{prop}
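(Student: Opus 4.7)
The strategy is to realise $\sigma_{p,q,n}$ explicitly as conjugation by the positive invertible element $h_n^{1/2 - 1/(2pq)}$ and to verify the isometry claim by the complex interpolation method, using an analytic family of conjugations that interpolates between the $\BMO$ and $L_p$ endpoint identifications. The motivation comes from Proposition \ref{Prop=LpIso} and equation \eqref{Eqn=Dvergleich}: the two $L_1$-embeddings of $L_p^\circ(\cR)$ differ by two-sided multiplication with $h_n^{1/2 - 1/(2p)}$, while at the $\BMO$ level the two embeddings $\kappa_{\BMO}^{\widetilde{\varphi}}$ and $\kappa_{\BMO}^{\widetilde{\varphi}_n}$ differ by two-sided multiplication with $h_n^{1/2}$ (by \eqref{Eqn=Dvergleich} applied to $p=1$). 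A linear interpolation of the exponents across the strip $S$ gives the affine function $\alpha(z) = \frac{1}{2} - \frac{z}{2p}$, with $\alpha(1/q) = \frac{1}{2} - \frac{1}{2pq}$.

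Concretely, given $y$ in the left-hand interpolation space and $F \in \mathcal{F}(\BMO_{\widetilde{\cS}}^\circ(\cR), L_p^\circ(\cR))$ with $F(1/q) = \kappa_{[\BMO,p;q]}^{\widetilde{\varphi}}(y)$ and Banach norm arbitrarily close to $\Vert y \Vert$, I would set
\[
G(z) = h_n^{\alpha(z)}\, F(z)\, h_n^{\alpha(z)}, \qquad \alpha(z) = \tfrac{1}{2} - \tfrac{z}{2p}.
\]
Since $h_n$ is positive and boundedly invertible, $z \mapsto h_n^{\alpha(z)} \in \cR$ is bounded and analytic on $S$, so $G: S \to L_1(\cR)$ is analytic on the interior, continuous on $S$, and $G(1/q)$ is exactly the stated formula. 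At the boundary $\Re z = 0$, writing $F(is) = \kappa_{\BMO}^{\widetilde{\varphi}}(x_{is})$ and using that $h_n$ commutes with $D_{\widetilde{\varphi}}$ (equation \eqref{Eqn=Varphi}), one computes
\[
G(is) = h_n^{1/2}\, h_n^{-is/(2p)}\, D_{\widetilde{\varphi}}^{1/2} x_{is} D_{\widetilde{\varphi}}^{1/2}\, h_n^{-is/(2p)}\, h_n^{1/2} = \kappa_{\BMO}^{\widetilde{\varphi}_n}\bigl(h_n^{-is/(2p)} x_{is} h_n^{-is/(2p)}\bigr),
\]
and the BMO-norm of $h_n^{-is/(2p)} x_{is} h_n^{-is/(2p)}$ equals that of $x_{is}$, because $h_n^{it}$ is unitary and equation \eqref{Eqn=HCommute} ensures that $\widetilde{\Phi}_t$ commutes with $\Ad(h_n^{it})$, so each difference $\widetilde{\Phi}_t(y^\ast y) - \widetilde{\Phi}_t(y)^\ast \widetilde{\Phi}_t(y)$ is merely conjugated by $h_n^{-is/(2p)}$. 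A parallel computation at $\Re z = 1$ combined with Proposition \ref{Prop=LpIso} shows $G(1+is) = \kappa_p^{\widetilde{\varphi}_n}(h_n^{-is/(2p)} y_{1+is} h_n^{-is/(2p)})$ with $L_p$-norm equal to that of $y_{1+is}$; decay at infinity transfers from $F$ to $G$ since $\Vert h_n^{\alpha(z)} \Vert$ is uniformly bounded on $S$.

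Hence $G$ witnesses $\sigma_{p,q,n}(y)$ as a member of $[\BMO_{\widetilde{\cS}}^\circ(\cR), L_p^\circ(\cR)]_{1/q}^{\widetilde{\varphi}_n}$ with norm at most $\Vert F \Vert_{\mathcal{F}}$; taking the infimum over $F$ yields contractivity of $\sigma_{p,q,n}$. Reversing the construction by replacing $h_n$ with $h_n^{-1}$ (equivalently, using $-\alpha(z)$) provides a two-sided inverse that is also a contraction, so $\sigma_{p,q,n}$ is an isometric bijection onto its image, and in fact onto the entire right-hand space. Complete isometry follows by running the same argument at the matrix level $M_k(\cR)$: the conjugation $1 \otimes h_n^{\alpha(z)}$ preserves all matrix norms, $\BMO$ and $L_p$ norms alike.

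The main obstacle is keeping straight the bookkeeping between the two parallel $L_1$-embeddings and verifying that the boundary values of $G$ genuinely lie in the intended subspaces $\kappa_{\BMO}^{\widetilde{\varphi}_n}(\BMO_{\widetilde{\cS}}^\circ(\cR))$ and $\kappa_p^{\widetilde{\varphi}_n}(L_p^\circ(\cR))$ with the correct norms. Once the central fact that $\Ad(h_n^{it})$ is a BMO-isometry is isolated (from \eqref{Eqn=HCommute} and the unitarity of $h_n^{it}$), everything else is a routine manipulation of analytic families in the complex interpolation framework.
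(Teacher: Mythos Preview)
Your proof is correct and follows essentially the same approach as the paper: construct an analytic family of two-sided conjugations by powers of $h_n$ (your $G(z) = h_n^{\alpha(z)} F(z) h_n^{\alpha(z)}$ is exactly the paper's $U_n f$ up to the choice of strip convention), verify isometry at the $\BMO$ boundary via \eqref{Eqn=HCommute} and at the $L_p$ boundary via Proposition \ref{Prop=LpIso}, then invert by negating the exponent. The only cosmetic discrepancy is that you parametrize the strip by $\Re z \in [0,1]$ whereas the paper's Section 2 fixes $\Im z \in [0,1]$, but the argument is identical under rotation.
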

\begin{proof}
We use short hand notation $X = \kappa_{\BMO}^{\widetilde{\varphi}}(\BMO^\circ_{\widetilde{\mathcal{S}}}(\cR))$, $X_n = \kappa_{\BMO}^{\widetilde{\varphi}_n}(\BMO^\circ_{\widetilde{\mathcal{S}}}(\cR))$, $Y = \kappa_p^{\widetilde{\varphi}}(L_p^{\circ}(\cR))$ and $Y_n =  \kappa_p^{\widetilde{\varphi}_n}(L_p^{\circ}(\cR))$.  The norm on  $X$ and $X_n$ is just the norm of $\BMO^\circ_{\widetilde{\mathcal{S}}}(\cR)$ through the respective embeddings  $\kappa_{\BMO}^{\widetilde{\varphi}}$ and $\kappa_{\BMO}^{\widetilde{\varphi}_n}$. Similarly the norms on $Y$ and $Y_n$ is just the norm of  $L_p(\cR)$. Let $\sigma_n$ be the map $[\BMO_{\widetilde{\cS}}^\circ(\cR), L_p^\circ(\cR)]_{\frac{1}{q}}^{\widetilde{\varphi}} \rightarrow L_1^\circ(\cR)$ defined by 
\[
\sigma_n \left(  \kappa^{\widetilde{\varphi}}_{[\BMO, p; q]}(y)\right) = h_n^{\frac{1}{2} - \frac{1}{2pq}}   \kappa_{[\BMO,p;q] }^{\widetilde{\varphi}} (y)  h_n^{\frac{1}{2} - \frac{1}{2pq}},
\]
 i.e. the mapping \eqref{Eqn=InterpolationIso0} on the $L_1$-level.

Take $f \in \mathcal{F}(X, Y)$ which we view as a function on the strip $S \rightarrow X+Y$, where $X+Y$ is a (non-isometric) subspace of $L_1(\cR)$, see Section \ref{Sect=Lp}. Define
\[
(U_n f )(z) =  h_n^{ \frac{iz}{2p} +  \frac{1}{2} } f(z) h_n^{ \frac{iz}{2p} +  \frac{1}{2}   } \in L_1^\circ(\cR), \qquad z \in S.
\]
We claim that $U_n f \in \mathcal{F}(X_n, Y_n)$. 
 Take $s \in \mathbb{R}$ so that by definition of $\mathcal{F}(X, Y)$, $f(s) = \kappa_{\BMO}^{\widetilde{\varphi}}(x)$  for some $x \in \BMO^\circ_{\widetilde{\mathcal{S}}}(\cR)$. Then by \eqref{Eqn=BMOEmbedding} 
\[
(U_n f)(s) = h_n^{ \frac{is}{2p} +  \frac{1}{2}  } f(s) h_n^{  \frac{is}{2p} +  \frac{1}{2} } = \kappa_{\BMO}^{\widetilde{\varphi}_n}( h_n^{ \frac{is}{2p}  } x h_n^{ \frac{is}{2p}  } ).
\]
Further, for $y \in \cR^\circ$ it follows from the definition of the BMO-norm  and Lemma \ref{Lem=MarkovExtension}  that
\[
\begin{split}
 \Vert   h_n^{ \frac{is}{2p}  } y h_n^{ \frac{is}{2p}} \Vert_{  \BMO^c_{\widetilde{\mathcal{S}}}   } ^2 
= & \sup_{t \geq 0}
\Vert
\widetilde{\Phi}_t ( (  h_n^{ \frac{is}{2p}  } y h_n^{ \frac{is}{2p}}  )^\ast (h_n^{ \frac{is}{2p}  } y h_n^{ \frac{is}{2p}}))
-  \widetilde{\Phi}_t (    h_n^{ \frac{is}{2p}  } y h_n^{ \frac{is}{2p}}  )^\ast \widetilde{\Phi}_t(h_n^{ \frac{is}{2p}  } y h_n^{ \frac{is}{2p}}))
\Vert \\
= & \sup_{t \geq 0}
\Vert
h_n^{  -\frac{is}{2p}  }   \widetilde{\Phi}_t (   y^\ast  y ) h_n^{ \frac{is}{2p}}
-  h_n^{ - \frac{is}{2p}  }   \widetilde{\Phi}_t (    y   )^\ast \widetilde{\Phi}_t( y)h_n^{ \frac{is}{2p}  } 
\Vert
= \Vert y \Vert_{  \BMO^c_{\widetilde{\mathcal{S}}}   }^2.
\end{split}
\]
The same holds for the row BMO-space so that $\Vert   h_n^{ \frac{is}{2p}  } y h_n^{ \frac{is}{2p}} \Vert_{  \BMO_{\widetilde{\mathcal{S}}}  } ^2 =  \Vert   y   \Vert_{  \BMO_{\widetilde{\mathcal{S}}}  } ^2$. And by density this in fact holds for all $y \in \BMO_{\widetilde{\mathcal{S}}}^\circ(\cR)$.
This shows that for $s \in \mathbb{R}$ we get $(U_nf)(s) \in X_n$ and
\begin{equation}\label{Eqn=BMOiso}
\Vert (U_n f)(s) \Vert_{\BMO_{\widetilde{\mathcal{S}}}} = \Vert f(s) \Vert_{\BMO_{\widetilde{\mathcal{S}}}}.
\end{equation}

Next consider $i + s \in i + \mathbb{R}$. By definition of $\mathcal{F}(X, Y)$ we have $f(i+s) \in Y$, so write $f(i+s) = \kappa_{p}^{\widetilde{\varphi}}(x)$ for $x \in  L_p^\circ(\cR)$. Then from \eqref{Eqn=Varphi} and \eqref{Eqn=LpIsoInter}
\begin{equation}\label{Eqn=UnIso}
\begin{split}
(U_n f)(i+s) = & h_n^{ \frac{is}{2p} - \frac{1}{2p} + \frac{1}{2}} f(s) h_n^{  \frac{is}{2p}  -  \frac{1}{2p}  + \frac{1}{2}} = h_n^{ \frac{is}{2p}}  \kappa_{p}^{\widetilde{\varphi}_n}(  x )   h_n^{ \frac{is}{2p} } 
=   \kappa_{p}^{\widetilde{\varphi}_n}(  h_n^{  \frac{is}{2p}  }  x  h_n^{  \frac{is}{2p}  } ).
\end{split}
\end{equation}
Proposition  \ref{Prop=LpIso} shows then that $(U_n f)(i+s) \in Y_n$ and
\begin{equation}\label{Eqn=LPiso}
 \Vert (U_n f)(i+s)  \Vert_p  = \Vert f(i+s)\Vert_p.
\end{equation}

We get from the equations \eqref{Eqn=BMOiso} and \eqref{Eqn=LPiso} that   $U_n f \in \mathcal{F}(X_n, Y_n)$ as the fact that $h_n$ is boundedly invertible implies that $U_n f$ is continuous on the strip $S$ and analytic on its interior. Moreover, 
$
\Vert U_n f \Vert_{\mathcal{F}(X_n, Y_n)}
\leq \Vert f \Vert_{\mathcal{F}(X, Y)}$.
So the assignment $f \mapsto U_n f$ is a contraction. 
Consider for $f \in \mathcal{F}(X_n, Y_n)$ the function
\[
(V_n f)(z) =    h_n^{ -\frac{iz}{2p} - \frac{1}{2}}  f(z)   h_n^{ -\frac{iz}{2p} - \frac{1}{2}}, \qquad z \in S.
\]
Then exactly as in the previous paragraph one proves that $V_n f \in \mathcal{F}(X, Y)$ and $
\Vert V_n f \Vert_{\mathcal{F}(X, Y)}
\leq \Vert f \Vert_{\mathcal{F}(X_n, Y_n)}$. Moreover $V_n = U_n^{-1}$ and hence $\mathcal{F}(X, Y)$ and $\mathcal{F}(X_n, Y_n)$ are isometrically isomorphic.

Now take $x \in [X, Y]_{1/q}$. Let $\epsilon > 0$. Take $f \in \mathcal{F}(X, Y)$ such that $f(\frac{i}{q}) = x$ and $\Vert x \Vert_{[X, Y]_{1/q}} \leq \Vert f \Vert_{\mathcal{F}(X, Y)} + \epsilon$. Then $\sigma_n(x) = (U_n f)(\frac{i}{q})$ so that $\Vert \sigma_n(x) \Vert_{[X_n, Y_n]_{1/q}} \leq \Vert U_n f \Vert_{\mathcal{F}(X_n, Y_n)} = \Vert f \Vert_{\mathcal{F}(X, Y)} \leq \Vert x \Vert_{[X,Y]_{1/q}} + \epsilon$. This shows that the map \eqref{Eqn=InterpolationIso0} is well-defined and contractive. Repeating this argument for $V_n$ instead of $U_n$ shows that in fact \eqref{Eqn=InterpolationIso0} is an isometric isomorphism. That the map is completely isometric follows by repeating the argument on matrix levels.
\end{proof}

\begin{thm}\label{Thm=InterpolationBMO}
Let $(\cM, \varphi)$ be a von Neumann algebra with normal faithful state. Let $\mathcal{S} = (\Phi_t)_{t \geq 0}$ be a $\varphi$-modular KMS-symmetric Markov semi-group. Assume that $\mathcal{S}$ admits a reversed Markov dilation with a.u. continuous path. Then we have,  for all $1 \leq p < \infty, 1 < q < \infty$,
\[
[\BMO^\circ_{\mathcal{S}}(\cM), L_p^\circ(\cM)]_{1/q} \approx_{pq} L_{pq}^\circ(\cM).
\]
\end{thm}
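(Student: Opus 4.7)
The plan is to reduce the statement to the tracial Theorem \ref{Thm=JungeMei} via Haagerup's reduction method, carried out in two stages: first lift from $\cM$ to the crossed product $\cR = \cM \rtimes_{\sigma^{\varphi}} \sfG$, and then descend to the finite centralizer subalgebras $\cR_n$ where Junge--Mei applies directly. For the first stage I would invoke Proposition \ref{Prop=SExtension}: the extended semigroup $\widetilde{\cS}$ is a $\widetilde{\varphi}$-modular, KMS-symmetric Markov semigroup on $\cR$ (Lemma \ref{Lem=MarkovExtension}) and admits a reversed Markov dilation with a.u.\ continuous path. The $\varphi$-preserving conditional expectation $\cE_{\cM}:\cR\to\cM$ commutes with $\widetilde{\cS}$ by construction, so it $1$-complements $L_p^\circ(\cM)$ inside $L_p^\circ(\cR)$, while Proposition \ref{Prop=BMOComplementM} does the same for the BMO-spaces. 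Combined with Lemma \ref{Lem=OneCompl}, it suffices to establish
\[
[\BMO^\circ_{\widetilde{\cS}}(\cR), L_p^\circ(\cR)]_{1/q}^{\widetilde{\varphi}} \approx_{pq} L_{pq}^\circ(\cR).
\]

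For the second stage, fix $n$ and use Proposition \ref{Prop=IntIso} to replace the reference state $\widetilde{\varphi}$ by $\widetilde{\varphi}_n$ on the interpolation scale. Since $\widetilde{\Phi}_t$ commutes with both $\sigma^{\widetilde{\varphi}}$ and the unitaries $h_n^{it}$ (Lemma \ref{Lem=MarkovExtension}), it preserves $\cR_n = \cR_{\widetilde{\varphi}_n}$; the restriction $\widetilde{\cS}|_{\cR_n}$ is therefore a KMS-symmetric Markov semigroup on the finite algebra $(\cR_n,\widetilde{\varphi}_n|_{\cR_n})$. Restricting the dilation of $\widetilde{\cS}$ built in Proposition \ref{Prop=SExtension} to the centralizer $\cO_{\widetilde{\psi}_n}\subseteq\cO$ yields a reversed Markov dilation for $\widetilde{\cS}|_{\cR_n}$, and the defining martingale $m(x)$ for $x\in\cR_n$ lies inside this centralizer so its a.u.\ continuous path is inherited. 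Theorem \ref{Thm=JungeMei} then gives
\[
[\BMO^\circ_{\widetilde{\cS}}(\cR_n), L_p^\circ(\cR_n)]_{1/q}^{\widetilde{\varphi}_n} \approx_{pq} L_{pq}^\circ(\cR_n)
\]
with constants uniform in $n$. Since $\cF_n:\cR\to\cR_n$ induces $1$-complemented inclusions of $L_r^\circ(\cR_n)$ and, by Proposition \ref{Prop=RnComplement}, of $\BMO^\circ_{\widetilde{\cS}}(\cR_n)$ into the corresponding spaces on $\cR$, Lemma \ref{Lem=OneCompl} promotes this to $1$-complementation at the interpolation level with respect to $\widetilde{\varphi}_n$, which Proposition \ref{Prop=IntIso} identifies isometrically with the one at state $\widetilde{\varphi}$.

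The final step is to pass from the inductive family $\{\cR_n\}$ to $\cR$. By Theorem \ref{Thm=Reduction}(3) the expectations $\cF_n$ converge $\sigma$-strongly to the identity and extend to contractions on each $L_r(\cR)$ and on $\BMO^\circ_{\widetilde{\cS}}(\cR)$, hence (by the Calder\'on property) on the interpolation space itself. One concludes that $\bigcup_n L_{pq}^\circ(\cR_n)$ is norm dense in $L_{pq}^\circ(\cR)$ and that the corresponding subspaces are dense at the interpolation level, so the uniform constants $\approx pq$ descend to the limit. The complete-isomorphism refinement follows by repeating the argument with $\cM$ replaced by $\cM\barotimes M_k(\C)$, whose Haagerup reduction is compatible with each step. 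The main obstacle I expect is this last inductive limit: one has to verify that the interpolation norm on $\cR$ is \emph{recovered} as the limit of those on $\cR_n$, not merely dominated, which requires producing approximating analytic paths in $\mathcal{F}(\BMO^\circ_{\widetilde{\cS}}(\cR_n), L_p^\circ(\cR_n))$ from a given path on $\cR$, while consistently tracking the bookkeeping between the two reference states $\widetilde{\varphi}$ and $\widetilde{\varphi}_n$ via Proposition \ref{Prop=IntIso}.
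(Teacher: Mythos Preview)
Your overall architecture matches the paper's, but there is a genuine gap in the step where you apply Theorem \ref{Thm=JungeMei} on $\cR_n$. You propose to restrict the dilation $(\cO_t,\widetilde{\pi}_t,\widetilde{\cE}_t)$ of $\widetilde{\cS}$ to the centralizer $\cO_{\widetilde{\psi}_n}$ and claim that the a.u.\ continuous path is inherited. The restriction is indeed a reversed Markov dilation of $\widetilde{\cS}|_{\cR_n}$ (this follows from \eqref{Eqn=CommuteE1} and \eqref{Eqn=CommuteE2}), but the a.u.\ continuity does \emph{not} descend automatically: the $\sigma$-weakly dense set $B\subseteq\cR$ produced by Proposition \ref{Prop=SExtension} consists of spans of $l_g\pi_\varphi(x)$, and there is no reason $B\cap\cR_n$ is dense in $\cR_n$. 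Worse, the projections $\widetilde{e}=\pi_\psi(e)$ witnessing a.u.\ continuity live in $\cO$, not in the centralizer $\cO_{\widetilde{\psi}_n}$, so even for those $x$ that happen to lie in $\cR_n$ you cannot use them for the restricted dilation.

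The paper circumvents this by not insisting on a.u.\ continuity at the $\cR_n$-level. Instead it invokes Remark \ref{Rmk=HpCondition}: Theorem \ref{Thm=JungeMei} only needs a dense set on which the martingale $m(x)$ has vanishing $h_p^d$-norm. By Lemma \ref{Lem=VanishHp}, $\Vert m(x)\Vert_{h_p^d}=0$ for $x\in B$; since $m_t(\cF_n(x))=\cF_n^{\cO}(m_t(x))$ and conditional expectations contract the $h_p^d$-norm \eqref{Eqn=HardyExpected}, one gets $\Vert m(\cF_n(x))\Vert_{h_p^d}=0$ for the $\sigma$-weakly dense set $\cF_n(B)\subseteq\cR_n$. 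This is the missing ingredient.

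Regarding your acknowledged obstacle in the inductive limit, the paper handles it more directly than by approximating analytic paths. The diagram built from Proposition \ref{Prop=IntIso} and Lemma \ref{Lem=OneCompl} shows that the embeddings $L_{pq}^\circ(\cR_n)\hookrightarrow [\BMO^\circ_{\widetilde{\cS}}(\cR),L_p^\circ(\cR)]_{1/q}^{\widetilde{\varphi}}$ are isometric (up to $\approx_{pq}$) and compatible in $n$, so the union, and hence $L_{pq}^\circ(\cR)$, sits isometrically inside the interpolation space. The reverse inclusion is then immediate from \cite[Theorem 4.2.2(a)]{BerghLofstrom}: $\cR^\circ$ is dense in the interpolation space, and $\cR^\circ\subseteq L_{pq}^\circ(\cR)$ by Lemma \ref{Lem=NotEmbedding}.
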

\begin{proof}
Because  $\mathcal{S} = (\Phi_t)_{t\geq 0}$ is $\varphi$-modular it may be extended to Markov semi-group $\widetilde{\cS} = (\widetilde{\Phi}_t)_{t \geq 0}$ on $\cR$, see Lemma \ref{Lem=MarkovExtension}. By Proposition \ref{Prop=SExtension} $\widetilde{\cS}$ also has a reversed Markov dilation with  a.u. continuous path. We claim that this map preserves $\cR_n := \cR_{\widetilde{\varphi}_n}$, which was defined as the centralizer of $\widetilde{\varphi}_n$. Let $x \in \cR_n$. Then, by applying \cite[Theorem VIII.3.3]{TakII} twice and Lemma \ref{Lem=MarkovExtension} we get,
\[
 \sigma^{\widetilde{\varphi}_n}_s( \widetilde{\Phi}_t(x)) =  h_n^{is} \sigma^{\widetilde{\varphi}}_s( \widetilde{\Phi}_t(x)) h_n^{-is}
 = h_n^{is} \widetilde{\Phi}_t(  \sigma^{\widetilde{\varphi}}_s( x )  ) h_n^{-is}
 = \widetilde{\Phi}_t(  h_n^{is}  \sigma^{\widetilde{\varphi}}_s( x )   h_n^{-is} )
 = \widetilde{\Phi}_t(x).
\]
So that $x \in \cR_{n}$. Denote the restriction of $\widetilde{\Phi}_{t}$ to $\cR_n$ by   $\widetilde{\Phi}_{n,t}$. In all, we obtained Markov semi-groups $\widetilde{\mathcal{S}} = (\widetilde{\Phi}_{t})_{t \geq 0}$ and $\widetilde{\mathcal{S}}_n = (\widetilde{\Phi}_{n,t})_{t \geq 0}$ with respect to the respective states $\widetilde{\varphi}$ and $\widetilde{\varphi}\vert_{\cR_n}$.
Note that by Lemma \ref{Lem=MarkovExtension}
\[
\widetilde{\varphi}_n \circ \widetilde{\Phi}_t(x) = \widetilde{\varphi}(h_n^{\frac{1}{2}} \widetilde{\Phi}_t(x)  h_n^{\frac{1}{2}} ) = \widetilde{\varphi}(\widetilde{\Phi}_t(   h_n^{\frac{1}{2}} x  h_n^{\frac{1}{2}} ) =  \widetilde{\varphi}(   h_n^{\frac{1}{2}} x  h_n^{\frac{1}{2}}) = \widetilde{\varphi}_n(x).
\]
This shows that $\widetilde{\Phi}_t: \cR_n \rightarrow \cR_n$ is also Markov with respect to  $\widetilde{\varphi}_n$, which is tracial on $\cR_n$.

As the semi-groups $\mathcal{S}$ and $\widetilde{\mathcal{S}}_n$ are restrictions of $\widetilde{\mathcal{S}}$ we have isometric inclusions of the corresponding BMO-spaces
\[
\BMO^\circ_{\mathcal{S}}(\cM)  \subseteq \BMO^\circ_{\widetilde{\mathcal{S}}}(\cR), \qquad  \BMO^\circ_{\widetilde{\mathcal{S}}_n}(\cR_n)  \subseteq \BMO^\circ_{\widetilde{\mathcal{S}}}(\cR), \qquad n \in \mathbb{N}.
\]
Moreover, these inclusions are 1-complemented by Lemmas \ref{Prop=BMOComplementM} and \ref{Prop=RnComplement}. 
Lemma \ref{Lem=MarkovExtension} also shows that  $\widetilde{\cS}$ admits a reversed Markov dilation with a.u. continuous path. Moreover, this dilation may be chosen to be  a dilation with respect to  $\widetilde{\varphi}_n$. Let $m(x) = (m_t(x))_{t \geq 0}$  be the martingale with $x$ in the set $B$ described in Definition \ref{Dfn=AUCtPath} for this Markov dilation. By Lemma \ref{Lem=VanishHp} we see that for every $2\leq p < \infty$ we have $\Vert m(x) \Vert_{h_p^d} = 0$ and then by \eqref{Eqn=HardyExpected} we see that $\Vert m(\cF_n(x)) \Vert_{h_p^d} = 0$. This shows that $\cF_{n}(B)$ is a $\sigma$-weakly  dense subset of $\cR_n$ such that the martingale $m(x), x \in  \cF_{n}(B)$ has vanishing $h_p^d$-norm. Therefore, by Remark \ref{Rmk=HpCondition} the Theorem  \ref{Thm=JungeMei} applies to the  von Neumann algebra $\cR_n$ with normal tracial state $\widetilde{\varphi}_n$ with Markov semi-group $\widetilde{\cS}_n$. 
 
So Theorem \ref{Thm=JungeMei} yields
\[
[\BMO_{\mathcal{S}_n}^\circ(\cR_n), L_q^\circ(\cR_n)  ]_{1/p}^{\widetilde{\varphi}_n} \approx_{pq} L_{pq}(\cR_n)^\circ.
\]
Now we have isometries
\[
 \xymatrix{
  [\BMO^\circ_{\widetilde{\mathcal{S}}_n}(\cR_n), L_{p}^\circ( \cR_n)   ]_{1/q}^{\widetilde{\varphi}_n}    \qquad \ar@{->}[r]^{ {\rm Lemma } \: \ref{Lem=OneCompl} }      &   [\BMO^\circ_{\widetilde{\mathcal{S}}}(\cR), L_{p}^\circ( \cR)  ]_{1/q}^{\widetilde{\varphi}_n}   \ar@{->}[d]^{\sigma_{p,q,n}^{-1} \:\:\:  \:\: {\rm Prop. } \: \ref{Prop=IntIso} }   \\
   L_{pq}^\circ(\cR_n ) \ar@{->}[u]^{\:\:\:\:  \approx_{pq}   }   &   [\BMO^\circ_{\cS}(\cR), L_p^\circ(\cR)]_{1/q}^{\widetilde{\varphi}}.
}
\]
Furthermore, for $x \in \cR_n$,
\[ 
\begin{split}
 & \kappa_{[\BMO, p; q]}^{\widetilde{\varphi}} \circ  \sigma_{p,q,n}^{-1}(D_{\widetilde{\varphi}}^{\frac{1}{2pq}} x  D_{\widetilde{\varphi}}^{\frac{1}{2pq}}) 
 =   h_n^{-\frac{1}{2} + \frac{1}{2pq}} \kappa_{[\BMO, p; q]}^{\widetilde{\varphi}_n}  (D_{\widetilde{\varphi}}^{\frac{1}{2pq}} x  D_{\widetilde{\varphi}}^{\frac{1}{2pq}}) h_n^{-\frac{1}{2} + \frac{1}{2pq}}  \\
 = &  h_n^{-\frac{1}{2} + \frac{1}{2pq}}  \kappa_{[\BMO, p; q]}^{\widetilde{\varphi}_n}  (D_{\widetilde{\varphi_n}}^{\frac{1}{2pq}} h_n^{-  \frac{1}{2pq}} x h_n^{-\frac{1}{2pq}} D_{\widetilde{\varphi_n}}^{\frac{1}{2pq}}) h_n^{-\frac{1}{2} + \frac{1}{2pq}} \\
 = & h_n^{-\frac{1}{2} + \frac{1}{2pq}}   D_{\widetilde{\varphi_n}}^{\frac{1}{2}} h_n^{-  \frac{1}{2pq}} x h_n^{-\frac{1}{2pq}} D_{\widetilde{\varphi_n}}^{\frac{1}{2}}) h_n^{-\frac{1}{2} + \frac{1}{2pq}} \\
 = & D_{\widetilde{\varphi}}^{\frac{1}{2}} x  D_{\widetilde{\varphi}}^{\frac{1}{2}}.
 \end{split} 
\] 
It follows that for each $n \in \mathbb{N}$ we have an isometric embedding,
\[
j_n: L_{pq}^\circ(\cR_n) \rightarrow [\BMO_{\widetilde{S}}^\circ(\cR), L_p^\circ(\cR)]_{1/q}^{\widetilde{\varphi}}, 
\]
and these embeddings are compatible with the inclusions $L_{pq}^\circ(\cR_n) \subseteq L_{pq}^\circ(\cR_{n+1})$ with respect to $\widetilde{\varphi}$.  This shows that $\cup_{n \in \mathbb{N}} L_{pq}^\circ(\cR_n, \widetilde{\varphi})$ can  isometrically be identified with a subspace of    $[\BMO_{\widetilde{S}}(\cR), L_p^\circ(\cR )]_{1/q}^{\widetilde{\varphi}}$. As $\cup_{n \in \mathbb{N}} L_{pq}^\circ(\cR_n)$ is dense in $L_{pq}^\circ(\cR)$, c.f. \cite[Theorem 8]{Goldstein}, we see that $L_{pq}^\circ(\cR)$ is isometrically contained in the space $[\BMO_{\widetilde{\mathcal{S}}}(\cR), L_p^\circ(\cR )]_{1/q}^{\widetilde{\varphi}}$.  By \cite[Theorem 4.2.2.(a)]{BerghLofstrom} we have that $\cR^{\circ}$ is dense in $[\BMO_{\widetilde{\mathcal{S}}}^\circ(\cR), L_q^\circ(\cR)]_{1/p}^{\widetilde{\varphi}}$. Further as  $\cR^{\circ}$ is also contained in $L_{pq}^\circ(\cR)$ we must have an    isomorphism
  \begin{equation}\label{Eqn=RInterPol}
  [\BMO_{\mathcal{S}_n}^\circ(\cR), L_q^\circ(\cR)]_{1/p} \approx_{pq} L_{pq}^\circ(\cR).
  \end{equation}

  Now again by Lemma \ref{Lem=OneCompl} we see that the space $[\BMO^\circ_{\mathcal{S}}(\cM), L^\circ_p(\cM)]_{1/q}^{\varphi}$ is a 1-complemented subspace of the left hand side of  \eqref{Eqn=RInterPol} and hence of  $L_{pq}^\circ(\cR)$. Further by \cite[Theorem 4.2.2.(a)]{BerghLofstrom} the space $[\BMO_{\mathcal{S}_n}^\circ(\cM), L_p^\circ(\cM)]_{1/q}^{\varphi}$  contains $\cM^\circ$ densely. Since in turn $\cM^\circ$ is dense in $L_{pq}^\circ(\cM)$ which is included in $L_{pq}^\circ(\cR)$ isometrically,  we conclude that  $[\BMO_{\mathcal{S}_n}^\circ(\cM), L_q(\cM)^\circ]_{1/p}^\varphi \approx_{pq} L_{pq}^\circ(\cM)$. Isomorphisms holds for complete bounds by considering matrix levels.
 \end{proof}

\section{Other  BMO-spaces associated with Markov semi-groups}\label{Sect=OtherBMO}

As in the rest of this paper let $\cM$ be von Neumann algebra with faithful normal state $\varphi$. Let $\cS = (\Phi_t)_{t \geq 0}$ be a Markov semi-group on $\cM$. Define a semi-norm (see \cite[Proposition 2.1]{JungeMei}),
\[
\Vert x \Vert_{\BMOa_{\cS}^c} = 
\sup_{t \geq 0}  \Vert  \Phi_t( \vert x - \Phi_t(x) \vert^2) \Vert^{\frac{1}{2}},
\]
and then set
\[
\Vert x \Vert_{\BMOa_{\cS}^r} = \Vert x^\ast \Vert_{\BMOa_{\cS}^c}, \qquad \Vert x \Vert_{\BMOa_{\cS}} = \max(\Vert x \Vert_{\BMOa_{\cS}^c}, \Vert x \Vert_{\BMOa_{\cS}^r}).
\]
\begin{lem}
For $x \in \cM$ we have $\Vert x \Vert_{\BMOa_{\cS}} =0$ if and only if for all $t \geq 0, \Phi_t(x) = x$. 
\end{lem}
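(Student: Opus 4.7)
The plan is straightforward and relies only on the Markov property (state preservation) and faithfulness of $\varphi$; we do not need to appeal to the multiplicative domain argument used for $\BMO_{\cS}$ in Lemma \ref{Lem=BMONilspace}.

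For the easy direction ($\Leftarrow$), if $\Phi_t(x) = x$ for every $t\ge 0$, then $x - \Phi_t(x) = 0$ so $\Phi_t(\vert x - \Phi_t(x)\vert^2) = 0$, which forces $\Vert x \Vert_{\BMOa_{\cS}^c} = 0$. The same computation applied to $x^\ast$ (using that $\Phi_t(x^\ast)=\Phi_t(x)^\ast=x^\ast$) gives $\Vert x \Vert_{\BMOa_{\cS}^r}=0$, hence $\Vert x \Vert_{\BMOa_{\cS}}=0$.

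For the nontrivial direction ($\Rightarrow$), suppose $\Vert x \Vert_{\BMOa_{\cS}} = 0$. Then in particular $\Vert x \Vert_{\BMOa_{\cS}^c} = 0$, so for every $t \geq 0$,
\[
\Phi_t\bigl(\vert x - \Phi_t(x)\vert^2\bigr) = 0.
\]
Applying the state $\varphi$ and using $\varphi \circ \Phi_t = \varphi$ (which is part of the Markov property) yields
\[
\varphi\bigl(\vert x - \Phi_t(x)\vert^2\bigr) = 0.
\]
Since $\vert x - \Phi_t(x)\vert^2 \ge 0$ and $\varphi$ is faithful, we conclude $x - \Phi_t(x) = 0$, i.e.\ $\Phi_t(x) = x$ for every $t \geq 0$.

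The only step that requires any care is justifying $\varphi \circ \Phi_t = \varphi$ (which is built into the definition of a Markov map) and faithfulness of $\varphi$ (which is our standing hypothesis on $\cM$). There is no real obstacle here — unlike the analogous statement for $\BMO_{\cS}$ where one needed to argue through the multiplicative domain and GNS-symmetry, the $\BMOa_{\cS}$-seminorm already packages the quantity $\Phi_t(\vert x-\Phi_t(x)\vert^2)$, which vanishes at the state if and only if $x$ is a fixed point of $\Phi_t$.
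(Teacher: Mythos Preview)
Your proof is correct and follows essentially the same approach as the paper's own proof: both directions proceed exactly as you describe, with the nontrivial implication obtained by applying the faithful state $\varphi$ to the identity $\Phi_t(\vert x-\Phi_t(x)\vert^2)=0$ and using $\varphi\circ\Phi_t=\varphi$.
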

\begin{proof}
	 The if part is obvious. Conversly, if $\Vert x \Vert_{\BMOa_{\cS}} =0$ then for all $t \geq 0$ we have $\Vert  \Phi_t( \vert x - \Phi_t(x) \vert^2) \Vert = 0$ and so $0 = \varphi(\Phi_t( \vert x - \Phi_t(x) \vert^2) ) = \varphi(  \vert x - \Phi_t(x) \vert^2)$. As $\varphi$ is faithful $x = \Phi_t(x)$.
\end{proof}
We see that on $\cM^\circ$ the $\BMOa$-semi-norm is actually a norm and its completion will be denoted by $\BMOa_{\cS}^\circ$ or $\BMOa_{\cS}^\circ(\cM)$. Note that we do not need to assume KMS-symmetry here. 

Furthermore, let $A_2$ be the closed densely defined operator such that $\exp(-tA_2) = \Phi_t^{(2)}, t \geq 0$, see Section \ref{Sect=Semigroup}. The Poisson semi-group $\cP = (\Psi_t)_{t\geq 0}$ is defined as the unique Markov semi-group such that $\Psi_t^{(2)} = \exp(-t A_2^{\frac{1}{2}}), t\geq 0$ (see \cite{Sauvageot}).  Therefore we obtain BMO-spaces
\[
\BMO_{\cP}^\circ = \BMO_{\cP}^\circ(\cM), \qquad 
\BMOa_{\cP}^\circ = \BMOa_{\cP}^\circ(\cM),
\]
together with their obvious row and column counterparts. Then \cite[Theorem 5.2]{JungeMei} proves the following tracial interpolation result.

\begin{thm}\label{Thm=TracialOtherBMO}
 	Let $\cM$ be a von Neumann algebra with faithful normal tracial state $\varphi$. Let $\cS = (\Phi_t)_{t \geq 0}$ be a KMS-symmetric Markov semi-group for $(\cM, \varphi)$.
	  Assume that $\cS$ admits a standard Markov dilation. Then,  
  \[
  [X, L_p^\circ(\cM)]_{1/q} \approx_{pq} L_{pq}^\circ(\cM),
  \] 
  where $X$ is any of the spaces $\BMOa^\circ_{\mathcal{S}}(\cM), \BMO^\circ_{\cP}(\cM)$ or $\BMOa^\circ_{\cP}(\cM)$.
\end{thm}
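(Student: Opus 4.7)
The statement is attributed to \cite[Theorem 5.2]{JungeMei}, so the proof in the paper is likely a direct citation. To propose an independent route I would mimic the strategy behind Theorem \ref{Thm=JungeMei}, namely realise each of the three BMO-norms (up to universal constants) as a column/row martingale BMO-norm on $\cN$ built from the standard Markov dilation $(\cN_t, \pi_t, \cE_t)_{t \geq 0}$, and then appeal to tracial martingale BMO--$L_p$ interpolation.

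First I would handle each $X \in \{\BMOa^\circ_\cS, \BMO^\circ_\cP, \BMOa^\circ_\cP\}$ by exhibiting an $L_\infty$-martingale $M(x) = (M_t(x))_{t \geq 0}$ in $\cN$ whose martingale BMO-norm is equivalent to $\Vert x \Vert_X$. For $\BMOa^\circ_\cS$, on a finite interval $[0,T]$ one can take $M_t(x) = \cE_t(\pi_T(x))$; combining the bimodule property of $\cE_t$, the $\ast$-homomorphism property of $\pi_t$, and the dilation relation $\cE_s \pi_t = \pi_s \Phi_{t-s}$ for $s < t$, one computes
\[
\cE_t\bigl(\vert \pi_T(x) - \cE_t(\pi_T(x)) \vert^2\bigr) = \pi_t\bigl(\Phi_{T-t}(\vert x - \Phi_{T-t}(x) \vert^2)\bigr),
\]
which after specializing $T-t=t$ and taking the supremum yields the desired isometric comparison. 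For the two Poisson variants, I would first lift the standard Markov dilation of $\cS$ to one of $\cP$ via Bochner subordination (time-changing the index of $(\cN_t)$ by a one-sided $\tfrac{1}{2}$-stable subordinator), which is routine because subordination preserves the filtration and the state; then the same identity gives the martingale comparison for $\BMOa^\circ_\cP$, and an analogous calculation with $\pi_t$ placed differently gives it for $\BMO^\circ_\cP$.

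Second, I would invoke the tracial martingale BMO--$L_p$ interpolation theorem of Junge--Perrin \cite{JungePerrin} to conclude $[\BMO^{\mathrm{mart}}(\cN), L_p^\circ(\cN)]_{1/q} \approx_{pq} L_{pq}^\circ(\cN)$, and pull back to $\cM$ using the $\varphi_\cN$-preserving conditional expectation $\cN \to \pi_0(\cM)$ combined with Lemma \ref{Lem=OneCompl} (the inclusion $X \hookrightarrow \BMO^{\mathrm{mart}}(\cN)$ being $1$-complemented because this conditional expectation commutes with the filtration on the image of $M$). The row case follows by taking adjoints and matrix amplification yields the completely bounded version with constants of order $pq$.

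The main obstacle I expect is the first step in the Poisson cases: one must check that the subordinated dilation is genuinely standard Markov with the correct filtered state-preserving structure, and that the resulting martingale identification has constants independent of the auxiliary parameter $T$. The $\BMOa$-norms also require slightly more care than $\BMO$ because the averaging $\Phi_t$ sits on the outside, so the martingale $M$ must be chosen so that the $*$-homomorphism $\pi_t$ can absorb this outer average, as in the identity displayed above; this is the reason one only needs a standard (not reversed) dilation and no a.u.\ continuity hypothesis here, explaining the remark in the introduction that ``some aspects in fact simplify.''
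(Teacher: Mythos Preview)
You are right that the paper gives no argument of its own here: Theorem~\ref{Thm=TracialOtherBMO} is simply quoted from \cite[Theorem~5.2]{JungeMei}. Your overall plan (identify the semi-group norm with a martingale BMO-norm on $\cN$ and then invoke tracial martingale interpolation) is also the shape of the Junge--Mei argument. However, the displayed identity at the heart of your first step is false, and this is not a cosmetic slip: it is precisely the point where the $\BMOa_{\cS}$ and $\BMO_{\cS}$ stories diverge.

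With $M_t(x)=\cE_t(\pi_T(x))$ and $s=T-t$, the conditional-expectation computation gives
\[
\cE_t\bigl(\vert \pi_T(x)-\cE_t(\pi_T(x))\vert^2\bigr)
=\cE_t(\pi_T(x^\ast x))-\vert\cE_t(\pi_T(x))\vert^2
=\pi_t\bigl(\Phi_s(x^\ast x)-\Phi_s(x)^\ast\Phi_s(x)\bigr),
\]
which is the $\BMO^c_{\cS}$ expression, not $\Phi_s(\vert x-\Phi_s(x)\vert^2)$. The two differ by $\Phi_s(\vert\Phi_s(x)\vert^2)-\Phi_s(x^\ast\Phi_s(x))-\Phi_s(\Phi_s(x)^\ast x)+\vert\Phi_s(x)\vert^2$, which does not vanish for a general Markov map. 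So the martingale you wrote down recovers exactly the space covered by Theorem~\ref{Thm=JungeMei} (and that theorem needs a \emph{reversed} dilation with a.u.\ continuous path, not a standard one), not $\BMOa^\circ_{\cS}$. The same problem propagates to your treatment of $\BMOa^\circ_{\cP}$.

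What Junge--Mei actually do for the three spaces in Theorem~\ref{Thm=TracialOtherBMO} is more indirect: they prove norm comparisons of the type $\Vert\,\cdot\,\Vert_{\BMO_{\cS}}\lesssim\Vert\,\cdot\,\Vert_{\BMOa_{\cS}}\lesssim\Vert\,\cdot\,\Vert_{\BMO_{\cP}}$ (using subordination and the elementary inequality $\Phi_t(\vert x-\Phi_t(x)\vert^2)\geq\Phi_t(x^\ast x)-\vert\Phi_t(x)\vert^2$), and then use the \emph{standard} dilation, lifted to the Poisson semi-group via \cite{Anan}, to embed $\BMO_{\cP}$ into a martingale BMO space on $\cN$; interpolation then handles all three spaces at once. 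Your subordination idea for $\cP$ is on the right track, but the upshot is that the reduction to martingale BMO goes through the chain of embeddings rather than through a single clean identity of the kind you wrote. If you want to repair the sketch, replace the wrong identity by the correct one above (which gives $\BMO_{\cP}$ once you have a dilation of $\cP$), and then supply the two easy inequalities linking $\BMOa_{\cS}$ and $\BMOa_{\cP}$ to $\BMO_{\cP}$.
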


We may generalize this to the non-tracial setting in the following way. The proof follows closely the lines of   Theorem \ref{Thm=InterpolationBMO}. We give the main differences. Firstly, we have that  $\BMOa^\circ_{\mathcal{S}}(\cM)$ embeds contractively into $L_1(\cM)$ as for $x \in \cM^\circ$ with polar decomposition $x = u \vert x \vert$ we get that
\[
\begin{split}
\Vert D_{\varphi}^{\frac{1}{2}} x D_{\varphi}^{\frac{1}{2}} \Vert_1^2  = &
\Vert D_{\varphi}^{\frac{1}{2}} u \vert x \vert D_{\varphi}^{\frac{1}{2}} \Vert_1^2
\leq \Vert D_{\varphi}^{\frac{1}{2}} u \Vert_2^2 \Vert \vert x \vert D_{\varphi}^{\frac{1}{2}} \Vert_2^2 
\leq \Vert D_{\varphi}^{\frac{1}{2}} x^\ast x D_{\varphi}^{\frac{1}{2}} \Vert_1
= \varphi(x^\ast x)\\
 = & \lim_{t \rightarrow \infty} \varphi(x^\ast x  + \Phi_t(x)^\ast \Phi_t(x)  - 
 \Phi_t(x)^\ast x - x^\ast \Phi_t(x)  ) \\
 = & \lim_{t \rightarrow \infty} \varphi( \Phi_t( x^\ast x  + \Phi_t(x)^\ast \Phi_t(x)  - 
 \Phi_t(x)^\ast x - x^\ast \Phi_t(x) ) ) \\
 \leq  & \limsup_{t \rightarrow \infty} \varphi( \Phi_t( x^\ast x  + \Phi_t(x)^\ast \Phi_t(x)  - 
 \Phi_t(x)^\ast x - x^\ast \Phi_t(x) ) ) 
 \leq  \Vert x \Vert_{\BMOa^\circ_{\mathcal{S}}}^2.
\end{split}
\] 
A similar argument holds for the row estimate, which yields a version of Lemma \ref{Lem=BMOinL1} for $\BMOa^\circ_{\mathcal{S}}$. Similarly the spaces   $\BMOa^\circ_{\mathcal{P}}$ and $\BMO^\circ_{\mathcal{P}}$ embed contractively into $L_1(\cM)$. The same statements hold for the completely bounded norms by considerig matrix amplifications. Let $X$ be any of these spaces. We denote the embedding of the complex interpolation spaces by 
\[
\kappa_{[X,p ; q]}^{\varphi}: [X, L_p^\circ(\cM)]_{1/q}^{\varphi} \rightarrow L_1^\circ(\cM). 
\]

\begin{lem}
	Let $\cM_1$ be a von Neumann subalgebra of $\cM$ that is invariant under the semi-group $\cS$ and which admits a $\varphi$-preserving conditional expectation $\cE$. Then we have 1-complemented inclusions
	\begin{equation}\label{Eqn=Inc}
	  \BMOa^\circ_{\mathcal{S}}(\cM_1) \subseteq \BMOa^\circ_{\mathcal{S}}(\cM), \quad
	  \BMOa^\circ_{\mathcal{P}}(\cM_1) \subseteq \BMOa^\circ_{\mathcal{P}}(\cM).
	\end{equation}
	Moreover, we have a 1-complemented inclusion $\BMO^\circ_{\mathcal{P}}(\cM_1) \subseteq \BMO^\circ_{\mathcal{P}}(\cM)$ and if $\cS$ admits a   standard Markov dilation we have a 1-complemented inclusion $\BMO^\circ_{\widetilde{\mathcal{P}}}(\cR_n) \subseteq \BMO^\circ_{\widetilde{\mathcal{P}}}(\cR)$.
\end{lem}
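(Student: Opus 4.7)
The four 1-complemented inclusions admit essentially three distinct arguments, each mirroring either Proposition \ref{Prop=BMOComplementM} or Proposition \ref{Prop=RnComplement}. In every case the isometric embedding itself is immediate from the definitions: $\cM_1$ is $\cS$-invariant, so $\Phi_t|_{\cM_1}$ is the intrinsic Markov semi-group on $\cM_1$ (and analogously for the Poisson subordinate), and in the last case $\widetilde{\cS}$ preserves $\cR_n$, as verified in the proof of Theorem \ref{Thm=InterpolationBMO}. The content of the lemma is to produce the norm-one projection, which is $\cE$ (respectively $\cF_n$). A common ingredient is the commutation $\cE \circ \Phi_t = \Phi_t \circ \cE$: in the $\cR_n \subseteq \cR$ setting this is formula \eqref{Eqn=CommuteE1}, and it propagates to the Poisson semi-group by functional calculus since $\cE$ commutes with the $L_2$-generator $A_2$.

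For the two $\BMOa$-inclusions the projection reduces to a clean Kadison--Schwarz argument. Using the commutation we have $\cE(y) - \Phi_t(\cE(y)) = \cE(y - \Phi_t(y))$, so Kadison--Schwarz for the ucp map $\cE$ yields
\[
|\cE(y) - \Phi_t(\cE(y))|^2 \leq \cE(|y - \Phi_t(y)|^2).
\]
Applying the positive map $\Phi_t$ and commuting through once more,
\[
\Vert \Phi_t(|\cE(y) - \Phi_t(\cE(y))|^2) \Vert \leq \Vert \cE \Phi_t(|y - \Phi_t(y)|^2) \Vert \leq \Vert \Phi_t(|y - \Phi_t(y)|^2) \Vert,
\]
and taking the supremum over $t \geq 0$ together with the analogous estimate on $y^\ast$ gives $\Vert \cE(y) \Vert_{\BMOa_\cS} \leq \Vert y \Vert_{\BMOa_\cS}$. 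The identical argument with $\Psi_t$ in place of $\Phi_t$ handles $\BMOa^\circ_{\cP}$.

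For $\BMO^\circ_{\cP}(\cM_1) \subseteq \BMO^\circ_{\cP}(\cM)$ a direct Kadison--Schwarz argument is unavailable, because the BMO-defect $\Psi_t(x^\ast x) - \Psi_t(x)^\ast \Psi_t(x)$ is not monotone in the inner $x$. I would instead mimic Proposition \ref{Prop=BMOComplementM}: choose a Stinespring dilation $\Psi_t(x) = V_t^\ast \pi_t(x) V_t$ together with a Stinespring dilation of $\cE$, arranged using the commutation $\cE \Psi_t = \Psi_t \cE$ so that a compatibility identity of the form $W^H V_t = (V_t \otimes 1) W$ holds at the dilation level. The same algebraic manipulation that produced the estimate in Proposition \ref{Prop=BMOComplementM} then gives
\[
\Vert \Psi_t(\cE(y)^\ast \cE(y)) - \Psi_t(\cE(y))^\ast \Psi_t(\cE(y)) \Vert \leq \Vert \Psi_t(y^\ast y) - \Psi_t(y)^\ast \Psi_t(y) \Vert,
\]
whence the contractive projection.

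For the final inclusion, assuming a standard Markov dilation of $\cS$, Proposition \ref{Prop=SExtension} provides one for $\widetilde{\cS}$, and Bochner subordination against the one-half-stable subordinator converts this into a standard Markov dilation of $\widetilde{\cP}$ that is $\widetilde{\varphi}$-modular. The proof of Proposition \ref{Prop=RnComplement} then transfers verbatim with $\widetilde{\cS}$ replaced by $\widetilde{\cP}$: the $L_2$-implementations of $\cF_n^{\cO}$ and of the dilation expectations $\widetilde{\cE}_{2t}$ for $\widetilde{\cP}$ are commuting projections, so the very same operator-norm identity bounds $\Vert \widetilde{\Psi}_t(\cF_n(x)^\ast \cF_n(x)) - \widetilde{\Psi}_t(\cF_n(x))^\ast \widetilde{\Psi}_t(\cF_n(x)) \Vert$ by $\Vert \widetilde{\Psi}_t(x^\ast x) - \widetilde{\Psi}_t(x)^\ast \widetilde{\Psi}_t(x) \Vert$. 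The main obstacle I anticipate is the technical verification that Bochner subordination preserves the modularity with respect to $\widetilde{\varphi}_n$ and the filtration structure of the dilation---this is standard in probability but requires careful bookkeeping in the non-tracial modular setting.
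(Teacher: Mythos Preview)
Your proposal is correct and follows the paper's approach closely. The Kadison--Schwarz argument for the $\BMOa$ inclusions is identical to the paper's, and for the two $\BMO_{\cP}$ inclusions the paper likewise defers to Propositions~\ref{Prop=BMOComplementM} and~\ref{Prop=RnComplement} respectively, citing \cite{Anan} for the passage from a standard Markov dilation of $\cS$ to one of $\cP$---which is exactly your Bochner subordination step. Your concern about preserving modularity under subordination is minor: the subordinated maps are averages (against the stable subordinator) of the $\varphi$-modular maps $\Phi_t$, so modularity and the commutation with $\cF_n^{\cO}$ and $\widetilde{\cE}_t$ are inherited termwise. One small caveat worth flagging: for the general inclusion $\BMO^\circ_{\cP}(\cM_1) \subseteq \BMO^\circ_{\cP}(\cM)$ you correctly note that producing compatible Stinespring dilations for an arbitrary expectation $\cE$ commuting with $\Psi_t$ is not automatic; the paper's ``similar to'' is equally terse here, and in the applications only the crossed-product expectations $\cE_{\cM}$ and $\cF_n$ are actually needed, where the explicit tensor/averaging structure from Propositions~\ref{Prop=BMOComplementM} and~\ref{Prop=RnComplement} does the work.
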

\begin{proof}
	It is immediate that \eqref{Eqn=Inc} are isometric inclusions.  Also for any $t \geq 0$ by the Kadison-Schwarz inequality,
	\[
	\begin{split}
	  & \Vert \Phi_t ( \vert  \cE(x) - \Phi_t(\cE(x)) \vert^2 ) \Vert^2
	=   \Vert \Phi_t (   \cE(  x - \Phi_t(x) )^\ast \cE(  x - \Phi_t(x) ) )  \Vert^2 \\
	\leq & \Vert   \Phi_t (  \cE((  x - \Phi_t(x) )^\ast (  x - \Phi_t(x) ) ) ) \Vert^2	
	\leq \Vert   \Phi_t (   (  x - \Phi_t(x) )^\ast (  x - \Phi_t(x) ) )  \Vert^2.
	\end{split}
	\]
	Taking the supremum over $t \geq 0$ we see that $\Vert \cE(x) \Vert_{\BMOa^\circ_{\mathcal{S}}(\cM_1)} \leq \Vert x \Vert_{\BMOa^\circ_{\mathcal{S}}(\cM)}$. The same argument applies to the Poisson semi-group $\cP$ so that \eqref{Eqn=Inc} follows. Accoding to \cite{Anan} a standard Markov dilation for $\cS$ yields a Markov dilation for $\mathcal{P}$.  The proof of the remaining statements are then similar to Lemmas \ref{Prop=BMOComplementM} and \ref{Prop=RnComplement}. 
\end{proof}

The proof of the following proposition is similar to the one of Proposition \ref{Eqn=InterpolationIso}.

\begin{prop}\label{Prop=IntIso}
	Let $\cS$ be a $\varphi$-modular semi-group. Let $X$ be any of the spaces $\BMOa^\circ_{\widetilde{\mathcal{S}}}, \BMO^\circ_{\widetilde{\cP}}$ or $\BMOa^\circ_{\widetilde{\cP}}$. We have a complete isometry
	\begin{equation}\label{Eqn=InterpolationIso}
	\begin{split}
	\sigma_{X,p,q,n}: & [X, L_p^\circ(\cR)]_{1/q}^{\widetilde{\varphi}} \rightarrow
	[X, L_p^\circ(\cR)]_{1/q}^{\widetilde{\varphi}_n}
	\end{split}
	\end{equation}
	Moreover, the isometry is explicitly given by 
	\[	
	\kappa_{[X,p;q] }^{\widetilde{\varphi}_n} \circ \sigma_{X,p,q,n}   (y) 
	=  
	h_n^{\frac{1}{2} - \frac{1}{2pq}}   \kappa_{[X,p;q] }^{\widetilde{\varphi}} (y)  h_n^{\frac{1}{2} - \frac{1}{2pq}}.
	\]	
\end{prop}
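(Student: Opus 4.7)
The plan is to adapt the proof of the earlier Proposition~\ref{Prop=IntIso} essentially verbatim; all that changes is the verification that conjugation by $h_n^{is/(2p)}$ is an isometry on each of the three BMO-type spaces under consideration. First, define exactly as before
\[
(U_n f)(z) = h_n^{\frac{iz}{2p} + \frac{1}{2}} f(z) h_n^{\frac{iz}{2p} + \frac{1}{2}}, \qquad z \in S,
\]
together with its inverse $V_n$. The treatment of the $L_p$-endpoint $i + s$ is identical to the previous proposition and uses only Proposition~\ref{Prop=LpIso}, so no new input is required there. Therefore the whole argument reduces to establishing, for each admissible $X$, that for $y$ in the dense subspace $\cR^\circ$ of $X$ and $s \in \mathbb{R}$,
\[
\bigl\Vert h_n^{\frac{is}{2p}} y\, h_n^{\frac{is}{2p}} \bigr\Vert_{X} = \Vert y \Vert_{X}.
\]

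For $X = \BMOa^\circ_{\widetilde{\cS}}$, the key input is the commutation relation \eqref{Eqn=HCommute} from Lemma~\ref{Lem=MarkovExtension}, namely $\widetilde{\Phi}_t(h_n^{is}\pi_\varphi(x) h_n^{-is}) = h_n^{is}\widetilde{\Phi}_t(\pi_\varphi(x))h_n^{-is}$. Applying this inside the defining norm
\[
\Vert z \Vert_{\BMOa^{c}_{\widetilde{\cS}}}^{2} = \sup_{t \geq 0} \Vert \widetilde{\Phi}_t(\vert z - \widetilde{\Phi}_t(z)\vert^{2})\Vert,
\]
with $z = h_n^{is/(2p)} y h_n^{is/(2p)}$ lets one factor $h_n^{\pm is/(2p)}$ outside the semi-group and obtain isometric invariance; the row case is symmetric. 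For $X = \BMO^\circ_{\widetilde{\cP}}$ and $X = \BMOa^\circ_{\widetilde{\cP}}$ one needs the analogous commutation relation for the Poisson semi-group, namely $\widetilde{\Psi}_t(h_n^{is} a h_n^{-is}) = h_n^{is}\widetilde{\Psi}_t(a) h_n^{-is}$. I would derive this by passing to the $L_2$-implementation: \eqref{Eqn=HCommute} says that the unitary group $u_s(\,\cdot\,) = h_n^{is/(2p)}\cdot h_n^{is/(2p)}$ (viewed on $L_2(\cR)$) commutes with $\widetilde{\Phi}_t^{(2)} = e^{-t A_2}$, hence with the generator $A_2$, hence with $A_2^{1/2}$ by the spectral theorem, and therefore with $\widetilde{\Psi}_t^{(2)} = e^{-t A_2^{1/2}}$. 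From this one extracts the required $\cM$-level commutation. Then repeating the BMO computation above (for both $\BMO$- and $\BMOa$-type norms, row and column) yields isometric invariance.

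Once this invariance is in place, one sees exactly as in the proof of Proposition~\ref{Prop=IntIso} that $U_n$ maps $\mathcal{F}(X, L_p^\circ(\cR))$ contractively into $\mathcal{F}(X_n, L_p^\circ(\cR)_n)$ (with $X_n$ the image of $X$ under $\kappa^{\widetilde{\varphi}_n}_{X}$ and similarly for $L_p$), and likewise $V_n = U_n^{-1}$ is a contraction in the reverse direction. Evaluating at $z = i/q$ and tracking the factor
\[
h_n^{\frac{i(i/q)}{2p} + \frac{1}{2}} = h_n^{\frac{1}{2} - \frac{1}{2pq}}
\]
gives the explicit formula stated. Complete isometry follows by running the same argument on matrix amplifications. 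The only point requiring genuine care, and the main potential obstacle, is the spectral-calculus step for the Poisson semi-group: one must verify that the unitary implementation of conjugation by $h_n^{is/(2p)}$ is indeed well-defined and strongly continuous on $L_2(\cR)$, so that Stone-type reasoning legitimately transfers commutation from $e^{-t A_2}$ to the functional calculus of $A_2^{1/2}$; this uses \eqref{Eqn=Varphi} together with standard modular theory on $\cR$.
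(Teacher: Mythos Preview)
Your proposal is correct and matches the paper's approach exactly: the paper simply states that the proof is similar to that of the earlier Proposition~\ref{Prop=IntIso}, and you have correctly identified that the only new ingredient is verifying the commutation $\widetilde{\Psi}_t(h_n^{is} a h_n^{-is}) = h_n^{is}\widetilde{\Psi}_t(a)h_n^{-is}$ for the Poisson semi-group, which you obtain via the spectral calculus on $L_2$. One small refinement: the map you want to implement unitarily on $L_2(\cR)$ is the $\widetilde{\varphi}$-preserving automorphism $\alpha_s(x) = h_n^{is} x h_n^{-is}$ (this is state-preserving because $h_n$ lies in the centralizer of $\widetilde{\varphi}$, c.f.\ \eqref{Eqn=Varphi}), rather than the two-sided multiplication $h_n^{is/(2p)}\cdot h_n^{is/(2p)}$ you wrote; with that adjustment your spectral argument goes through cleanly.
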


We now get the following theorem. The KMS-symmetry is only needed because Theorem \ref{Thm=TracialOtherBMO} assumes it. 

\begin{thm}
	Following the notation introduced above. Assume moreover that $\cS$ is a $\varphi$-modular KMS-symmetric Markov semi-group that admits a standard Markov dilation. Then,    for all $1 \leq p < \infty, 1 < q < \infty$,
	\[
	[X, L_p^\circ(\cM)]_{1/q} \approx_{pq} L_{pq}^\circ(\cM),
	\] 
	where $X$ is any of the spaces $\BMOa^\circ_{\mathcal{S}}, \BMO^\circ_{\cP}$ or $\BMOa^\circ_{\cP}$.
\end{thm}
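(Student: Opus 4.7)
The strategy mirrors the proof of Theorem \ref{Thm=InterpolationBMO}: use Haagerup reduction to pass to the finite/tracial setting, apply Theorem \ref{Thm=TracialOtherBMO} there, then lift back via the complementation and change-of-state isometries already assembled in the paper. First, extend $\cS$ to $\widetilde{\cS}$ on $\cR=\cM\rtimes_{\sigma^{\varphi}}\sfG$ by Lemma \ref{Lem=MarkovExtension}; as in Theorem \ref{Thm=InterpolationBMO}, $\widetilde{\cS}$ preserves each centralizer $\cR_n$ and restricts to a KMS-symmetric Markov semi-group $\widetilde{\cS}_n$ there which is tracial for $\widetilde{\varphi}_n$. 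Using the standard Markov dilation analogue of Proposition \ref{Prop=SExtension} (the proof carries over verbatim with $s<t$ replacing $t<s$ in \eqref{Eqn=IntertwineI}), $\widetilde{\cS}$ inherits a $\widetilde{\varphi}$-modular standard Markov dilation, and applying $\cF_n$ in the dilation picture as in the proof of Proposition \ref{Prop=RnComplement} yields a standard Markov dilation of $\widetilde{\cS}_n$ with respect to the tracial $\widetilde{\varphi}_n$.

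Second, identify the Poisson lifts. The spatial construction of the Poisson semi-group $\cP=(\Psi_t)_{t\geq 0}$ from $\cS$ via $\Psi_t^{(2)}=\exp(-tA_2^{1/2})$ commutes with the crossed product and with compression by $\cF_n$: because $\widetilde{\Phi}_t$ commutes with $\sigma^{\widetilde{\varphi}}$ and preserves $\cR_n$, the $L_2$-generators intertwine correctly, so $\widetilde{\cP}$ is the Poisson semi-group of $\widetilde{\cS}$ on $\cR$, and its restriction to $\cR_n$ is the Poisson semi-group of $\widetilde{\cS}_n$. By \cite{Anan}, a standard Markov dilation of $\widetilde{\cS}_n$ yields one of its Poisson semi-group. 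Therefore Theorem \ref{Thm=TracialOtherBMO} applies on $\cR_n$ to give, for each of $X\in\{\BMOa^\circ_{\widetilde{\cS}_n},\BMO^\circ_{\widetilde{\cP}_n},\BMOa^\circ_{\widetilde{\cP}_n}\}$,
\[
[X(\cR_n), L_p^\circ(\cR_n)]_{1/q}^{\widetilde{\varphi}_n}\approx_{pq} L_{pq}^\circ(\cR_n).
\]

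Third, lift from $\cR_n$ to $\cR$ and then to $\cM$. Combining the 1-complemented inclusions $X(\cR_n)\subseteq X(\cR)$ and $X(\cM)\subseteq X(\cR)$ (given by the preceding lemma, for all three choices of $X$) with Lemma \ref{Lem=OneCompl} and the change-of-state isometry $\sigma_{X,p,q,n}$ of Proposition \ref{Prop=IntIso}, and computing the action of $\sigma_{X,p,q,n}^{-1}$ on $D_{\widetilde{\varphi}_n}^{\frac{1}{2pq}}xD_{\widetilde{\varphi}_n}^{\frac{1}{2pq}}$ exactly as in the diagram at the end of the proof of Theorem \ref{Thm=InterpolationBMO} (using \eqref{Eqn=Dvergleich}), one obtains compatible isometric embeddings
\[
j_n\colon L_{pq}^\circ(\cR_n)\hookrightarrow [X(\cR),L_p^\circ(\cR)]_{1/q}^{\widetilde{\varphi}}.
\]
Density of $\cup_n L_{pq}^\circ(\cR_n)$ in $L_{pq}^\circ(\cR)$ (\cite[Theorem 8]{Goldstein}), together with density of $\cR^\circ$ in the interpolation space \cite[Theorem 4.2.2.(a)]{BerghLofstrom}, then forces $[X(\cR),L_p^\circ(\cR)]_{1/q}^{\widetilde{\varphi}}\approx_{pq}L_{pq}^\circ(\cR)$. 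Finally, 1-complementedness of $X(\cM)\subseteq X(\cR)$ and $L_{pq}^\circ(\cM)\subseteq L_{pq}^\circ(\cR)$, combined with density of $\cM^\circ$ in $[X(\cM),L_p^\circ(\cM)]_{1/q}$, yields the desired $[X(\cM),L_p^\circ(\cM)]_{1/q}\approx_{pq} L_{pq}^\circ(\cM)$; matrix amplifications give the c.b.\ version.

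The main obstacle is to justify the Poisson-side of the reduction cleanly: one must check that the spatial passage $\cS\mapsto\cP$ is compatible with both the crossed product $\cR=\cM\rtimes\sfG$ and the compression to $\cR_n$, so that $\widetilde{\cP}\!\restriction\!\cR_n=(\widetilde{\cS}\!\restriction\!\cR_n)^{\mathrm{Poisson}}$, and that the resulting Poisson dilation on $\cR_n$ is standard in the sense of \cite{JungeMei} (invoking \cite{Anan}). The $\BMOa$ variant is easier since the same Stinespring/expectation argument used in the proof of Proposition \ref{Prop=BMOComplementM} adapts directly—commuting the Stinespring dilation of $\Phi_t$ with the Stinespring dilation of $\cE_{\cM}$ and of $\cF_n$ controls $\Vert\Phi_t(|x-\Phi_t(x)|^2)\Vert$ by the same quantity upstairs—so the 1-complementation lemma and Proposition \ref{Prop=IntIso} go through unchanged, and the rest is bookkeeping.
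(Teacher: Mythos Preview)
Your proposal is correct and follows essentially the same route as the paper's proof, which is itself only a sketch that points back to Theorem \ref{Thm=InterpolationBMO}. You give more detail than the paper does on the Poisson compatibility with the crossed product and with compression to $\cR_n$, and on producing a standard dilation of $\widetilde{\cS}_n$; the paper absorbs these into ``one now checks'' together with the preceding lemma and Proposition \ref{Prop=IntIso}, so your added care is appropriate rather than a deviation.
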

\begin{proof}
	We sketch the proof. First we observe that again $\cS$ may be extended to a Markov semi-group $\widetilde{\cS}$ on $\cR$ which has a standard Markov dilation, c.f. Proposition \ref{Prop=SExtension}. Again $\widetilde{\cS}$ restricts to $\cR_n$ as a Markov semi-group with respect to $\widetilde{\varphi}_n$.  Depending on which space $X$ is (as in the statement of the theorem) we define the following. Let $Y$ be either $\BMOa^\circ_{\mathcal{S}}(\mathcal{R}), \BMO^\circ_{\cP}(\mathcal{R})$ or $\BMOa^\circ_{\cP}(\mathcal{R})$. Let $Y_n$ be either $\BMOa^\circ_{\mathcal{S}}(\mathcal{R}_n), \BMO^\circ_{\cP}(\mathcal{R}_n)$ or $\BMOa^\circ_{\cP}(\mathcal{R}_n)$.  We may therefore apply the tracial Theorem \ref{Thm=TracialOtherBMO} to interpolate for each $n$ and find $[Y_n, L_p^\circ(\cR_n)]_{1/q}^{\widetilde{\varphi_n}} \approx_{pq} L_{pq}^\circ(\cR_n)$. One now checks that there is a  diagram   
	\[
	\xymatrix{
		[ Y_n, L_{p}^\circ( \cR_n)   ]_{1/q}^{\widetilde{\varphi}_n}    \qquad \ar@{->}[r]^{\subseteq}     &   [ Y, L_{p}^\circ( \cR)  ]_{1/q}^{\widetilde{\varphi}_n}   \ar@{->}[d]^{\sigma^{-1}_{X,p,q,n}}     \\
		L_{pq}^\circ(\cR_n ) \ar@{->}[u]^{\:\:\:\:  \approx_{pq}   }   &   [ Y, L_p^\circ(\cR)]_{1/q}^{\widetilde{\varphi}}. 
	}
	\]
 	that is compatible with respect to the interpolation structure of $\widetilde{\varphi}$. The remainder of the proof is then exactly the same as in Theorem \ref{Thm=InterpolationBMO}. 
\end{proof}

 \section{Fourier multipliers on free Araki-Woods factors}\label{Sect=Hilbert}

We recall the definiton of free Araki-Woods factors from \cite{Shlyakhtenko}. Let $\cHR$ be a real Hilbert space and let $\cHC = \cHR \otimes_{\mathbb{R}} \mathbb{C}$ be its complexification. 
For $\xi \in \cHC$ with $\xi = \xi_1 + i \xi_2$ and $\xi_1, \xi_2 \in \cHR$ we set $\overline{\xi} = \xi_1 - i \xi_2$. 
 Let $(V_t)_{t \in \mathbb{R}}$ be a strongly continuous 1-parameter group of orthogonal transformations on $\cHR$ and use the same notation for its extension to a strongly continuous unitary 1-parameter group on $\cHC$.  Through Stone's theorem we have $V_t = A^{it}$ where $A$ is a positive (possibly) unbounded self-adjoint operator on $\cHC$. We define a new innerproduct on $\cHC$ by setting $\langle \xi, \eta \rangle_A = \langle \frac{2A}{1+A} \xi, \eta \rangle$. Let $\cH$ be the completion of $\cHC$ with respect to the latter inner product. We have that the embedding $\cHR \hookrightarrow \cH$ is isometric \cite[p. 332]{Shlyakhtenko}. 
   We construct a Fock space,
\[
\cF = \mathbb{C} \Omega \oplus \bigoplus_{k=1}^{\infty} \cH^{\otimes k}.
\]
We denote $\varphi_{\Omega}$ for the vector state $x \mapsto \langle x \Omega, \Omega \rangle$. 
For $\xi \in \cH$ let $a(\xi)$ be the creation operator on $\mathcal{F}$ defined by
\[
a(\xi): \eta_1 \otimes \ldots \otimes \eta_k \mapsto \xi \otimes \eta_1 \otimes \ldots \otimes \eta_k.
\]
Let $a^\ast(\xi)$ be its adjoint which is the annihilation operator
\[
a^\ast(\xi): \eta_1 \otimes \ldots \otimes \eta_k \mapsto \langle \eta_1, \xi \rangle_A \eta_2 \otimes \ldots \otimes \eta_k.
\]
For $\xi \in \cH$ define the self-adjoint operator $s(\xi) = a(\xi) + a^{\ast}(\xi)$. Let,
\[
\cM := \mathcal{A}_0'' \textrm{ with }  \mathcal{A}_0 := \Gamma(\cHR, (V_t)_t) := \langle s(\xi) \mid  \xi \in \cHR   \rangle,
\]
where $\langle s(\xi) \mid  \xi \in \cHR   \rangle$ stands for the $\ast$-algebra generated by these operators. 
The von Neumann algebra $\cM$ is called the {\it free Araki-Woods} algebra. The vacu\"um vector $\Omega$ is separating and cyclic for this algebra. Set $\varphi_\Omega(\: \cdot \: ) = \langle \: \cdot \:  \Omega, \Omega \rangle$. Therefore {\it if} for $\xi \in \cF$ there is an operator $W(\xi)$ such that $W(\xi) \Omega = \xi$, then this operator is unique.  For various calculations and to define suitable Fourier multipliers in the first place we need the following Wick theorem.
\begin{thm}[See Proposition 2.7 of \cite{BKS} or Lemma 3.2 of \cite{HoudayerRicard}]\label{Thm=Wick}
Suppose that $\xi_1, \ldots, \xi_n \in \cHC$ then,
\begin{equation}\label{Eqn=Wick}
W(\xi_1 \otimes \ldots \otimes \xi_n) = \sum_{j=0}^n  a(\xi_1) \ldots a(\xi_j) a^\ast(\overline{ \xi_{j+1} } ) \ldots a^\ast( \overline{ \xi_{n} }).
\end{equation}
\end{thm}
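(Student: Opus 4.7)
The plan is to verify directly that the operator on the right-hand side of \eqref{Eqn=Wick}, which I denote by $T(\xi_1,\ldots,\xi_n)$, satisfies the two properties which characterize $W(\xi_1 \otimes \ldots \otimes \xi_n)$: namely, that $T(\xi_1,\ldots,\xi_n) \in \cM$ and that $T(\xi_1,\ldots,\xi_n)\Omega = \xi_1 \otimes \ldots \otimes \xi_n$. Since $\Omega$ is cyclic and separating for $\cM$, the uniqueness assertion in the paragraph preceding the theorem will then force $T(\xi_1,\ldots,\xi_n) = W(\xi_1 \otimes \ldots \otimes \xi_n)$.

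The action on $\Omega$ is immediate: for every $k$ we have $a^\ast(\overline{\xi_k})\Omega = 0$, so only the $j=n$ term in the sum defining $T(\xi_1,\ldots,\xi_n)$ survives, and a routine iteration of the creation formula gives $a(\xi_1)\cdots a(\xi_n)\Omega = \xi_1 \otimes \ldots \otimes \xi_n$.

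The substantive content is to show $T(\xi_1,\ldots,\xi_n) \in \cM$. I would proceed by induction on $n$, with base case $n=1$: for $\xi_1 \in \cHR$ we have $T(\xi_1) = s(\xi_1) \in \mathcal{A}_0 \subseteq \cM$, and the general case $\xi_1 \in \cHC$ follows by writing $\xi_1 = \xi_1^{r} + i\xi_1^{i}$ and using that $\xi_1 \mapsto T(\xi_1)$ is $\mathbb{C}$-linear (since $a$ is linear, conjugation is antilinear, and $a^\ast$ is antilinear). For the inductive step the key observation is the \emph{free} commutation relation
\[
a^\ast(\xi)\, a(\eta) = \langle \eta, \xi \rangle_A \cdot I_{\cF}, \qquad \xi,\eta \in \cH,
\]
which follows immediately from the two action formulas, since $a(\eta)$ places $\eta$ in front and $a^\ast(\xi)$ then strips it off contributing exactly the scalar $\langle \eta,\xi\rangle_A$. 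Taking $\xi_1 \in \cHR$ and expanding $s(\xi_1) T(\xi_2,\ldots,\xi_n)$, the creation part $a(\xi_1)\,T(\xi_2,\ldots,\xi_n)$ reproduces all Wick monomials in $T(\xi_1,\ldots,\xi_n)$ except the purely annihilation one, while the annihilation part $a^\ast(\xi_1)\,T(\xi_2,\ldots,\xi_n)$ contributes precisely that missing monomial from its $j=0$ piece, plus scalar collapses $a^\ast(\xi_1)a(\xi_2) = \langle \xi_2,\xi_1\rangle_A$ in every remaining piece. A careful indexing gives
\[
s(\xi_1)\,T(\xi_2,\ldots,\xi_n) = T(\xi_1,\ldots,\xi_n) + \langle \xi_2, \xi_1 \rangle_A \cdot T(\xi_3,\ldots,\xi_n).
\]
By the inductive hypothesis the two terms involving $T(\xi_2,\ldots,\xi_n)$ and $T(\xi_3,\ldots,\xi_n)$ lie in $\cM$, and $s(\xi_1)\in \cM$, so $T(\xi_1,\ldots,\xi_n) \in \cM$. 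The passage from $\xi_1 \in \cHR$ to $\xi_1 \in \cHC$ is again by $\mathbb{C}$-linearity of $T$ in the first slot and of the sesquilinear-free map $\xi \mapsto a(\xi)+a^\ast(\overline{\xi})$.

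The main obstacle is bookkeeping the index shift in the identity above: one has to match the $j=k$ term of $a(\xi_1) T(\xi_2,\ldots,\xi_n)$ with the $j=k+1$ term of $T(\xi_1,\ldots,\xi_n)$, and check that the ``missing'' $j=0$ term of $T(\xi_1,\ldots,\xi_n)$ is exactly supplied by the $j=0$ piece of $a^\ast(\xi_1) T(\xi_2,\ldots,\xi_n)$, whereas all remaining ($j \geq 1$) pieces collapse via the free relation to scalar multiples of Wick monomials in $\xi_3,\ldots,\xi_n$. Once this is done cleanly, the recursion makes the membership $T(\xi_1,\ldots,\xi_n)\in \cM$ transparent.
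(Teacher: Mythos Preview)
The paper does not give its own proof of this statement; it is quoted from \cite{BKS} (Proposition 2.7) and \cite{HoudayerRicard} (Lemma 3.2). Your argument is correct and is essentially the standard one found in those references: the recursion
\[
s(\xi_1)\,T(\xi_2,\ldots,\xi_n) = T(\xi_1,\ldots,\xi_n) + \langle \xi_2,\xi_1\rangle_A\,T(\xi_3,\ldots,\xi_n)
\]
follows exactly as you describe from the free relation $a^\ast(\xi)a(\eta)=\langle\eta,\xi\rangle_A$, and together with $T(\xi_1,\ldots,\xi_n)\Omega=\xi_1\otimes\cdots\otimes\xi_n$ it yields membership in $\cM$ by induction. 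One small point worth making explicit: when $n=2$ the term $T(\xi_3,\ldots,\xi_n)$ should be read as the identity (the empty Wick word), so the recursion still makes sense.
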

 The linear span of operators of the form \eqref{Eqn=Wick} form a $\ast$-algebra which we shall denote by $\mathcal{A}$ (in fact, this follows from \eqref{Eqn=Split} below). Moreover $\mathcal{A}$ is  dense in $\cM$. 
	
If $T$ is a contractive operator on $\cH_{\mathbb{R}}$ such that for every $t \in \mathbb{R}$ we have $T V_t = V_t T$  then there exists a unique normal ucp map (see \cite{Hiai}, \cite[Proposition 3.3]{Wasilewski} for the even more general result for the $q$-Araki-Woods case),
\[
\Gamma(T): \cM \rightarrow \cM: W(\xi_1 \otimes \ldots \otimes \xi_n) \mapsto W(T\xi_1 \otimes \ldots \otimes T\xi_n).
\]
This assignment is called {\it second quantization}. We are now ready to define the Hilbert transform. 
 
\begin{dfn}\label{Dfn=FreeHilbert}
Fix spaces $\cH_\mathbb{C}^{\pm} \subseteq \cHC$ that are closed in $\cHC$ and such that $\cH_\mathbb{C}^{+} \cap \cH_\mathbb{C}^{-} = \{0\}$. So as Banach spaces $\cHC = \cHC^+ \oplus \cHC^-$. 
Assume moreover that  $\cH_\mathbb{C}^{+}$ and  $\cH_\mathbb{C}^{-}$ are orthogonal in $\cH$ for the inner product $\langle \: , \: \rangle_A$. Set $\epsilon = (\cH_\mathbb{C}^{+}, \cH_\mathbb{C}^{-})$.  The mapping $H_\epsilon: \mathcal{A} \rightarrow \mathcal{A}$ defined as the linear extension of
\[
H_\epsilon: W(\xi_1 \otimes \ldots \otimes \xi_n) =  \pm W( \xi_1 \otimes \ldots \otimes \xi_n),
\]
with $\xi_{1} \in \cH_\mathbb{C}^{\pm}, \xi_2, \ldots, \xi_n \in \cHC$
and $H_\epsilon(1) = 0$ will be called the {\it Hilbert transform} (which only depends on the decomposition $\cHC = \cHC^+ \oplus \cHC^-$).
\end{dfn}

\begin{rmk}\label{Rmk=GaussianModular}
Let $(\sigma_t)_{t \in \mathbb{R}}$ be the modular automorphism group of $\varphi_\Omega$. We have 
\begin{equation}\label{Eqn=GaussianModular}
\sigma_t(W(\xi_1 \otimes \ldots \otimes \xi_n)) = W( (A^{it} \xi_1) \otimes \ldots \otimes (A^{it} \xi_n) ),
\end{equation}
 see \cite{Shlyakhtenko}. Suppose that the spaces $\cHC^{\pm}$ are invariant subspaces for all $A^{it}, t \in \mathbb{R}$.  It follows that $\sigma_t$ and $H_\epsilon$ with $\epsilon = (\cH_\mathbb{C}^{+}, \cH_\mathbb{C}^{-})$ commute for all $t \in \mathbb{R}$.
\end{rmk}

\subsection{$L^p$-boundedness and Cotlar's trick}

To define a fixed Hilbert transform we  prefix a decomposition $\cHC^+ \oplus \cHC^-$ of the Hilbert space $\cHC$. Here  the $\cHC^{\pm}$ are closed in $\cHC$ and orthogonal in $\cH$.  Set $\epsilon = (\cH_\mathbb{C}^{+}, \cH_\mathbb{C}^{-})$ as before.
We write $\cE_\Omega^\perp(x) = x - \varphi_{\Omega}(x)$. This is the orthocomplement of the projection onto $\mathbb{C} 1 \subseteq \cM$ with respect to the inner product of the vacu\"um state.

\begin{prop}[Cotlar formula for the Hilbert transform]
The following relation holds true:
\begin{equation}\label{Eqn=Cotlar}
\cE_\Omega^\perp\left( H_{\epsilon}(x) H_{\epsilon}(y)^\ast  \right) = \cE_\Omega^\perp\left(   H_{\epsilon}( x H_{\epsilon}(y)^\ast) + H_{\epsilon}( y H_{\epsilon}(x)^\ast)^\ast -  H_{\epsilon}(H_{\epsilon}(xy^\ast)^\ast)^\ast \right),
\end{equation}
for all $x,y \in \mathcal{A}$.
\end{prop}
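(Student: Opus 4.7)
Our plan is to verify the Cotlar identity by a direct computation on Wick monomials. By multi-linearity in each tensor factor it suffices to treat $x = W(\xi_1 \otimes \cdots \otimes \xi_m)$ and $y = W(\eta_1 \otimes \cdots \otimes \eta_n)$ with each relevant vector decomposed according to $\cHC = \cHC^+ \oplus \cHC^-$, so that $H_\epsilon$ acts on any such Wick monomial by multiplication by the sign $\pm 1$ that tracks which summand contains its first tensor argument; write $\sigma(\xi_1), \sigma(\eta_1) \in \{\pm 1\}$ for these signs.

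The engine of the proof is a product formula for Wick operators, which one extracts from Theorem \ref{Thm=Wick} combined with the relation $a^\ast(\zeta) a(\mu) = \langle \mu, \overline{\zeta} \rangle_A \cdot 1$ (immediate from the definitions of $a, a^\ast$):
\begin{equation*}
W(\xi_1 \otimes \cdots \otimes \xi_m) W(\nu_1 \otimes \cdots \otimes \nu_n) = \sum_{k=0}^{\min(m,n)} \Big(\prod_{j=1}^k \langle \nu_j, \overline{\xi_{m-j+1}} \rangle_A\Big)\, W(\xi_1 \otimes \cdots \otimes \xi_{m-k} \otimes \nu_{k+1} \otimes \cdots \otimes \nu_n),
\end{equation*}
where the $k$-th summand collapses to its scalar coefficient exactly when $k = m = n$. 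Apply this to $xy^\ast$, using $y^\ast = W(\overline{\eta_n} \otimes \cdots \otimes \overline{\eta_1})$, and symmetrically to $yx^\ast$; each product becomes a sum indexed by the number $k$ of ``boundary'' contractions, and we write $c_k W_k$ for the $k$-th term.

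Next, expand each of the three right-hand operators with this product formula and apply $H_\epsilon$ termwise. Since $H_\epsilon$ only sees the first tensor factor, the sign picked up by the $k$-th summand of $H_\epsilon(xy^\ast)$ equals $\sigma(\xi_1)$ for $k<m$, switches to $\sigma(\overline{\eta_{n-m}})$ at the boundary value $k=m<n$ (where $\xi_1$ has been fully contracted away), and annihilates the scalar summand that occurs when $k = m = n$. Treating $yx^\ast$ analogously and taking adjoints where required, one verifies that in each of the three ranges $m<n$, $m>n$, and $m=n$ two of the operators $A := H_\epsilon(xH_\epsilon(y)^\ast)$, $B := H_\epsilon(yH_\epsilon(x)^\ast)^\ast$, $C := H_\epsilon(H_\epsilon(xy^\ast)^\ast)^\ast$ coincide and the surviving one equals $\sigma(\xi_1)\sigma(\eta_1)\,\cE_\Omega^\perp(xy^\ast) = \cE_\Omega^\perp(H_\epsilon(x) H_\epsilon(y)^\ast)$; consequently $\cE_\Omega^\perp(A + B - C)$ coincides with the left-hand side.

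The main obstacle is the combinatorial bookkeeping around the boundary summands. The $H_\epsilon$-sign on the $k$-th term of $xy^\ast$ (and dually of $yx^\ast$) switches from the ``bulk'' value $\sigma(\xi_1)$ to the ``boundary'' value $\sigma(\overline{\eta_{n-m}})$ (respectively $\sigma(\overline{\xi_{m-n}})$) precisely when all of the $\xi$-factors (respectively $\eta$-factors) have been consumed by contractions, and these boundary contributions differ between $A$, $B$, and $C$. The content of the identity is that they pair off inside $A+B-C$ in just the right way, leaving only the bulk sum $\sigma(\xi_1)\sigma(\eta_1)\sum_k c_k W_k$, which matches $\cE_\Omega^\perp(H_\epsilon(x) H_\epsilon(y)^\ast)$. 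Since $\cE_\Omega^\perp$ removes only the unique scalar summand (present iff $m = n$), all surviving terms are non-trivial Wick words and can be compared coefficient by coefficient.
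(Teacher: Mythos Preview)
Your approach is essentially the same as the paper's: reduce by linearity to Wick monomials, use the product/Wick formula to expand $xy^\ast$ and $yx^\ast$ as sums over contractions, and then split into the three cases $m>n$, $m=n$, $m<n$ to check that two of $A,B,C$ coincide (the ``boundary'' terms you identify are exactly the extra summands the paper isolates) while the third reproduces $\epsilon_x\epsilon_y\,\cE_\Omega^\perp(xy^\ast)$. One small slip: the contraction identity should read $a^\ast(\zeta)a(\mu)=\langle\mu,\zeta\rangle_A$ (no bar), though your stated product formula is nevertheless consistent with this and with the paper's \eqref{Eqn=Split}.
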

\begin{proof}
By linearity we may assume that $x$ and $y$ are Wick operators of elementary tensors. So say
$x = W(  \xi_1  \otimes \ldots \otimes  \xi_m )$ and $y = W(  \eta_1  \otimes \ldots \otimes  \eta_n )$. Moreover, assume that $\xi_1 \in \cHC^{\epsilon_x}, \eta_1 \in \cHC^{\epsilon_y}$ for signs $\epsilon_x, \epsilon_y = \pm 1$.
By \eqref{Eqn=Wick} we get,
\begin{equation}\label{Eqn=xystar}
\begin{split}
&x y^\ast =  \\
&
\left(   \sum_{r=0}^m  a( \xi_1 )   \ldots   a(  \xi_r )    a^\ast( \overline{ \xi_{r+1} }  )      \ldots   a^\ast(\overline{ \xi_m }  )   \right)
\left(       \sum_{s=0}^n  a( \overline{ \eta_n } )  \ldots   a( \overline{ \eta_{s+1} }  )
a^\ast( \eta_s  )    \ldots  a^\ast( \eta_1 ).
 \right)
\end{split}
\end{equation}
We rename vectors by setting $(\mu_{1}, \ldots, \mu_{n+m}) = ( \xi_{1} , \ldots,  \xi_{m} ,  \overline{ \eta_n }, \ldots, \overline{ \eta_1 })$.
In the first equality in the next computation we collect the terms in \eqref{Eqn=xystar} by separating the ones where no annihilation operator is on the left of a creation operator (first summand of \eqref{Eqn=Split}) and the ones where such a combination does occur (second summand of \eqref{Eqn=Split}). The second equation is the Wick formula \eqref{Eqn=Wick},
\begin{equation}\label{Eqn=Split}
\begin{split}
x y^\ast =   &     \sum_{r=0}^{n+m} a(\mu_1)   \ldots   a(\mu_r)    a^\ast(\overline{\mu_{r+1}})      \ldots   a^\ast(\overline{\mu_{n+m}})   \\
  & +  \left(   \sum_{r=0}^{m-1}      a( \xi_1 )  \ldots a( \xi_r )    a^\ast(\overline{\xi_{r+1}} )      \ldots   a^\ast(\overline{ \xi_{m-1}})   \right)  a^\ast(\overline{ \xi_{m}}) a(  \overline{ \eta_{n}} )
       \\
      & \quad \times \quad \left(   \sum_{s=0}^{n-1}      a(\overline{\eta_{n-1})}  \ldots   a(\overline{ \eta_{s+1} })    a^\ast(  \eta_{s}  )    \ldots  a^\ast(\eta_{1})\right) \\
 = & W(  \xi_{1}  \otimes \ldots \otimes  \xi_{m} \otimes \overline{ \eta_{n}} \otimes \ldots \otimes \overline{ \eta_{1} }   ) \\
  & +    \langle \overline{ \eta_{n} },  \overline{ \xi_{m} }  \rangle_A W( \xi_{1}  \otimes \ldots \otimes   \xi_{m-1}  ) W( \eta_{1} \otimes \ldots \otimes \eta_{n-1}  )^\ast.
\end{split}
\end{equation}
Now we separate cases. Note that we may assume that $n, m \not = 0$ because otherwise the proposition is trivial. 

\vspace{0.3cm}
 
\noindent {\it Case 1:} Assume $m > n >0$.   Applying the equation \eqref{Eqn=Split} inductively on the length of $m$ we see that,
\begin{equation}\label{Eqn=xystar2}
\begin{split}
x y^\ast = & \sum_{k=0}^n \prod_{l=0 }^{k-1} \langle \overline{ \eta_{n-l} }  , \overline{ \xi_{m-l}}  \rangle_A  W( \xi_{1}  \otimes \ldots \otimes  \xi_{m-k}  \otimes  \overline{ \eta_{n-k} } \otimes \ldots \otimes \overline{ \eta_{1}} ).
\end{split}
\end{equation}
Here the $k=0$ term is understood as
 $W( \xi_{1}  \otimes \ldots \otimes  \xi_{m}  \otimes  \overline{ \eta_{n} } \otimes \ldots \otimes \overline{ \eta_{1}} )$.
In particular as $m > n$ we see that $\cE_\Omega^\perp\left( x y^\ast \right) =  xy^\ast$ and similarly
\[
\begin{split}
\cE_\Omega^\perp\left( H_{\epsilon}( x H_{\epsilon}(y)^\ast)  \right) = & H_{\epsilon}( x H_{\epsilon}(y)^\ast),\\ \cE_\Omega^\perp\left(H_{\epsilon}( y H_{\epsilon}(x)^\ast)^\ast \right) = & H_{\epsilon}( y H_{\epsilon}(x)^\ast)^\ast, \\ 
 \cE_\Omega^\perp\left(  H_{\epsilon}(H_{\epsilon}(xy^\ast)^\ast)^\ast \right) = & H_{\epsilon}(H_{\epsilon}(xy^\ast)^\ast)^\ast.
 \end{split}
 \]
 So   to prove the Cotlar identity \eqref{Eqn=Cotlar} we can ignore the projection  $\cE_\Omega^\perp$ in this case. 
Now for the right hand side of the Cotlar identity \eqref{Eqn=Cotlar} we argue that we get the  Equation \eqref{Eqn=RHSCotlar} below.   Firstly, because $H_{\epsilon}(y) = \epsilon_{y} y$ we find that  $x H_{\epsilon}(y)^\ast$ equals $ \epsilon_{y}$ times the expression \eqref{Eqn=xystar2}. Then, as $m > n$,
\[
\begin{split}
H_{\epsilon}( x H_{\epsilon}(y)^\ast)
= & \sum_{k=0}^{n}   \epsilon_{y} \epsilon_{x}     \prod_{l=0 }^{k-1} \langle  \overline{ \eta_{n-l} }   , \overline{\xi_{m-l}}  \rangle_A  W( \xi_{1}  \otimes \ldots \otimes  \xi_{m-k}  \otimes  \overline{ \eta_{n-k} } \otimes \ldots \otimes \overline{ \eta_{1} } ).
\end{split}
\] 
Secondly, $y H_{\epsilon}(x)^\ast = \epsilon_{x} y  x^\ast$. Then, $y H_{\epsilon}(x)^\ast$ is $ \epsilon_{x}$ times the adjoint of the expression \eqref{Eqn=xystar2}. Then, we get the following two summands, where the second line appears as if $k =n$ then there is no more tensor $\eta_{1}$ appearing  in the decomposition of $y H_{\epsilon}(x)^\ast$ in terms of Wick words,
\[
\begin{split}
H_{\epsilon}( y H_{\epsilon}(x)^\ast)= &
 \sum_{k=0}^{n-1}     \epsilon_{x} \epsilon_{y}     \prod_{l=0 }^{k-1} \langle  \overline{ \xi_{m-l} }   , \overline{ \eta_{n-l} }  \rangle_A  W(   \eta_{1} \otimes \ldots \otimes    \eta_{n-k}  \otimes  \overline{  \xi_{m-k  } } \otimes \ldots \otimes  \overline{  \xi_{1}  }   ) \\
& +         \epsilon_{x}    \prod_{l=0 }^{n-1} \langle   \overline{\xi_{m-l}} , \overline{ \eta_{n-l} }    \rangle_A H_\epsilon \left(  W( \overline{ \xi_{m-n}}  \otimes \ldots \otimes   \overline{  \xi_{1}  } )  \right). \\
\end{split}
\] 
By a similar argument we get also that,
\[
\begin{split}
H_{\epsilon}(H_{\epsilon}(xy^\ast)^\ast) = &
\sum_{k=0}^{n-1}     \epsilon_{x} \epsilon_{y}     \prod_{l=0 }^{k-1} \langle     \overline{\xi_{m-l}} , \overline{ \eta_{n-l} }  \rangle_A  W(   \eta_{1} \otimes \ldots \otimes    \eta_{n-k}  \otimes  \overline{  \xi_{m-k  }} \otimes \ldots \otimes  \overline{  \xi_{1}  }   ) \\
& +     \epsilon_{x}  \prod_{l=0 }^{n-1} \langle   \overline{\xi_{m-l} }  , \overline{ \eta_{n-l} }   \rangle_A H_\epsilon \left(  W( \overline{\xi_{m-n}}  \otimes \ldots \otimes   \overline{\xi_{1}}   )  \right). 
\end{split}
\]
Then,
\begin{equation}\label{Eqn=RHSCotlar}
\begin{split}
&  H_{\epsilon}( x H_{\epsilon}(y)^\ast) + H_{\epsilon}( y H_{\epsilon}(x)^\ast)^\ast -  H_{\epsilon}(H_{\epsilon}(xy^\ast)^\ast)^\ast \\
= &  \epsilon_{x} \epsilon_{y} \sum_{k=0}^{n}     \prod_{l=0 }^{k-1} \langle  \overline{ \eta_{n-l} } , \overline{  \xi_{m-l}}    \rangle_A  W( \xi_{1}  \otimes \ldots \otimes  \xi_{m-k}  \otimes  \overline{ \eta_{n-k} } \otimes \ldots  \otimes \overline{ \eta_{1} } ).
\end{split}
\end{equation}  
On the other hand, from \eqref{Eqn=xystar} we conclude that,
\begin{equation}\label{Eqn=LHSCotlar}
  H_{\epsilon}(x) H_{\epsilon}(y)^\ast  = \epsilon_{x} \epsilon_{y}  xy^\ast,
\end{equation}
which equals \eqref{Eqn=RHSCotlar} by \eqref{Eqn=Split}.

\vspace{0.3cm}

\noindent {\it Case 2:}  Assume $m = n >0$. Because as $H_\epsilon(1) = 0$ we find the following decomposition (so the summand $k= n$ vanishes),
\[
\begin{split}
H_{\epsilon}( x H_{\epsilon}(y)^\ast)
= & \sum_{k=0}^{n-1}   \epsilon_{y} \epsilon_{x}     \prod_{l=0 }^{k-1} \langle  \overline{ \eta_{n-l} }   , \overline{\xi_{m-l}}  \rangle_A  W( \xi_{1}  \otimes \ldots \otimes  \xi_{m-k}  \otimes  \overline{ \eta_{n-k} } \otimes \ldots \otimes \overline{ \eta_{1} } ).
\end{split}
\] 
Further, again as  as $H_\epsilon(1) = 0$, 
\[
\begin{split}
H_{\epsilon}( y H_{\epsilon}(x)^\ast)= &
\sum_{k=0}^{n-1}     \epsilon_{x} \epsilon_{y}     \prod_{l=0 }^{k-1} \langle  \overline{ \eta_{n-l} }   , \overline{\xi_{m-l} }  \rangle_A  W(   \eta_{1} \otimes \ldots \otimes    \eta_{n-k}  \otimes  \overline{  \xi_{m-k  } } \otimes \ldots \otimes  \overline{  \eta_{1}  }   ),  
\end{split}
\] 
and
\[
\begin{split}
H_{\epsilon}(H_{\epsilon}(xy^\ast)^\ast) = &
\sum_{k=0}^{n-1}     \epsilon_{x} \epsilon_{y}     \prod_{l=0 }^{k-1} \langle  \overline{ \eta_{n-l} }   , \overline{\xi_{m-l}}  \rangle_A  W(   \eta_{1} \otimes \ldots \otimes    \eta_{n-k}  \otimes  \overline{  \xi_{m-k  }} \otimes \ldots \otimes  \overline{  \xi_{1}  }   ).
\end{split}
\]
Then,  
\begin{equation}\label{Eqn=RHSCotlar2}
\begin{split}
&  H_{\epsilon}( x H_{\epsilon}(y)^\ast) + H_{\epsilon}( y H_{\epsilon}(x)^\ast)^\ast -  H_{\epsilon}(H_{\epsilon}(xy^\ast)^\ast)^\ast \\
= &  \epsilon_{x} \epsilon_{y} \sum_{k=0}^{n-1}     \prod_{l=0 }^{k-1} \langle  \overline{ \eta_{n-l} } , \overline{ \xi_{m-l}}    \rangle_A  W( \xi_{1}  \otimes \ldots \otimes  \xi_{m-k}  \otimes  \overline{ \eta_{n-k} } \otimes \ldots  \otimes \overline{ \eta_{1} } ).
\end{split}
\end{equation}
This expression is in the range of the projection  $\cE_\Omega^\perp$. On the other hand, by \eqref{Eqn=Split} and using $n = m$ we get
\begin{equation} 
\begin{split}
& \cE_\Omega^\perp \left( H_{\epsilon}(x) H_{\epsilon}(y)^\ast \right)  = \epsilon_{x} \epsilon_{y} \cE_\Omega^\perp \left( xy^\ast \right) \\
 = &
\epsilon_{x} \epsilon_{y} \sum_{k=0}^{n-1}     \prod_{l=0 }^{k-1} \langle  \overline{ \eta_{n-l} } , \overline{\xi_{m-l}}    \rangle_A  W( \xi_{1}  \otimes \ldots \otimes  \xi_{m-k}  \otimes  \overline{ \eta_{n-k} } \otimes \ldots  \otimes \overline{ \eta_{1} } ),
\end{split}
\end{equation}
which concludes the proof of Case 2 as this equals \eqref{Eqn=RHSCotlar2}. 
 

\vspace{0.3cm}

\noindent {\it Case 3:} Assume $n > m$. The proof can be obtained by a mutatis mutandis copy of Case 1. We sketch a second way to finish the proof. By Case 1:
\begin{equation}\label{Eqn=Case2}
H_{\epsilon}(y) H_{\epsilon}(x)^\ast = H_{\epsilon}( y H_{\epsilon}(x)^\ast) + H_{\epsilon}( x H_{\epsilon}(y)^\ast)^\ast -  H_{\epsilon}(H_{\epsilon}(yx^\ast)^\ast)^\ast.
\end{equation}
Then one verifies that
$
  H_{\epsilon}(H_{\epsilon}(yx^\ast)^\ast) =  H_{\epsilon}(H_{\epsilon}(xy^\ast)^\ast)^\ast$.
 So that taking adjoints of \eqref{Eqn=Case2} we see,
\[
H_{\epsilon}(x) H_{\epsilon}(y)^\ast =
 H_{\epsilon}( y H_{\epsilon}(x)^\ast)^\ast + H_{\epsilon}( x H_{\epsilon}(y)^\ast) - H_{\epsilon}(H_{\epsilon}(xy^\ast)^\ast)^\ast.
\]
\end{proof}

We write $D$ for the operator $D_{\varphi_{\Omega}}$.
 
\begin{lem} \label{Lem=OrthH}
    For $x \in \mathcal{A}$ we have that
	\[
	 \varphi_{\Omega}( H_\epsilon(  x)^\ast  H_\epsilon( x )  ) = \varphi_{\Omega}( x^\ast x ).
	\]
	So certainly for every $1 \leq p < \infty$ we get that  $\Vert D^{\frac{1}{2p}} \varphi_{\Omega}(  H_\epsilon(  x)^\ast  H_\epsilon( x ) )  D^{\frac{1}{2p}} \Vert_{p} = \Vert  D^{\frac{1}{2p}}  \varphi_{\Omega}( x^\ast x )  D^{\frac{1}{2p}} \Vert_{p}$.
\end{lem}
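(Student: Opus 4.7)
The plan is a direct Fock-space computation, using $\varphi_\Omega(y^\ast y) = \|y\Omega\|^2$ and the orthogonality of $\cHC^+$ and $\cHC^-$ built into the hypothesis on $\epsilon$.

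By linearity of $H_\epsilon$ and of each side of the identity in $x$ and $x^\ast$, I would first split $x \in \mathcal{A}$ as $x = x_+ + x_-$, where $x_\pm$ is a finite linear combination of Wick monomials $W(\xi_1 \otimes \cdots \otimes \xi_n)$ of length $n \geq 1$ whose first tensor factor $\xi_1$ lies in $\cHC^\pm$. This splitting exists because every $\xi_1 \in \cHC$ decomposes uniquely as $\xi_1^+ + \xi_1^-$ under $\cHC = \cHC^+ \oplus \cHC^-$ and $W$ is multilinear in each slot. By the sign rule of Definition \ref{Dfn=FreeHilbert}, the Hilbert transform acts on this decomposition as
\[
H_\epsilon(x) = x_+ - x_-.
\]

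The crux of the argument is the orthogonality of $x_+\Omega$ and $x_-\Omega$ in the full Fock space $\cF$. Using $W(\eta_1\otimes\cdots\otimes\eta_n)\Omega = \eta_1\otimes\cdots\otimes\eta_n$ together with the fact that the inner product on $\cH^{\otimes k}$ is the tensor product of $k$ copies of $\langle\cdot,\cdot\rangle_A$, the vectors $x_\pm \Omega$ lie in the subspaces
\[
x_+\Omega \in \bigoplus_{k\geq 1}\cHC^+\otimes \cH^{\otimes(k-1)}, \qquad x_-\Omega \in \bigoplus_{k\geq 1}\cHC^-\otimes \cH^{\otimes(k-1)},
\]
which are mutually orthogonal in $\cF$ precisely because $\cHC^+ \perp \cHC^-$ in $(\cH, \langle\cdot,\cdot\rangle_A)$ by the standing hypothesis on $\epsilon$. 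Hence $\langle x_+\Omega, x_-\Omega\rangle = 0$.

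With this orthogonality the identity is immediate: expanding each squared norm and using the vanishing of the cross term twice,
\[
\varphi_\Omega(H_\epsilon(x)^\ast H_\epsilon(x)) = \|x_+\Omega - x_-\Omega\|^2 = \|x_+\Omega\|^2 + \|x_-\Omega\|^2 = \|x_+\Omega + x_-\Omega\|^2 = \varphi_\Omega(x^\ast x).
\]
The concluding $L_p$-reformulation is cosmetic: for any non-negative scalar $s$ one has $\|D^{\frac{1}{2p}}\, s\, D^{\frac{1}{2p}}\|_p = s\,\|D^{\frac{1}{p}}\|_p = s$, because $\varphi_\Omega(1) = 1$. The only substantive input is the Fock-level orthogonality of the $\cHC^+$- and $\cHC^-$-starting subspaces; everything else is routine linear algebra, so I expect no real obstacle once the splitting of $x$ via the first tensor factor is set up correctly.
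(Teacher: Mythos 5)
Your proposal is correct and follows essentially the same route as the paper: decompose $x$ as $x^+ + x^-$ according to whether the first tensor leg of each Wick monomial lies in $\cHC^+$ or $\cHC^-$, observe $H_\epsilon(x)=x^+-x^-$, and use $W(\xi)\Omega=\xi$ together with the orthogonality of $\cHC^+$ and $\cHC^-$ in $(\cH,\langle\cdot,\cdot\rangle_A)$ to kill the cross terms. Like the paper, you implicitly assume $x$ has no scalar component (your restriction to length $n\geq 1$), which is the tacit convention under which the stated identity holds, so no further comment is needed.
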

\begin{proof}
	As $x$ is in the algebra $\mathcal{A}$ we may take a decomposition $x = x^+ + x^-$ with $x^{\pm}$ in the linear span of Wick operators $W(\xi_1 \otimes \ldots \otimes \xi_n), \xi_1 \in \cHC^{\pm}$. We have
	\[
	\varphi_{\Omega}(H_\epsilon(x)^\ast H_\epsilon(x)) = \langle x^+ \Omega, x^+ \Omega \rangle + \langle x^- \Omega, x^- \Omega \rangle - \langle x^+ \Omega, x^- \Omega \rangle  -\langle x^- \Omega, x^+ \Omega \rangle. 
	\]
	As $\cHC^+$ and $\cHC^-$ are orthogonal for the inner product of $\cH$ we find that 
	\[
	\varphi_{\Omega}(H_\epsilon(x)^\ast H_\epsilon(x))  =  \langle x^+ \Omega, x^+ \Omega \rangle + \langle x^- \Omega, x^- \Omega \rangle = \varphi_{\Omega}(  x^\ast x  ).
	\] 
\end{proof} 

\begin{thm}
 For every $1<p<\infty$ and   every choice of  $\epsilon= (\cHC^+, \cHC^-)$ as in Definition \ref{Dfn=FreeHilbert} such that $A^{it}$ leaves $\cHC^{\pm}$ invariant for all $t \in \mathbb{R}$ the map $H_\epsilon$ extends to a bounded map $L_p(\cM) \rightarrow L_p(\cM)$ that is determined by
\begin{equation}\label{Eqn=LpHilbert}
H_\epsilon:  D^{\frac{1}{2p}} x  D^{\frac{1}{2p}} \mapsto  D^{\frac{1}{2p}} H_{\epsilon}(x)  D^{\frac{1}{2p}}, \qquad x \in \mathcal{A}.
\end{equation}
Moreover, let $c_p$ be the norm of \eqref{Eqn=LpHilbert}. Then for $p \geq 2$ a power of 2 we have $c_p \leq p^{\gamma/2}$ with  $\gamma = ^3\!\!\log(2)$. Further, for $C = 2^{\gamma/2}$ we have  $c_p \leq C p^{\gamma/2}$ for $p \geq 2$ arbitrary. 
\end{thm}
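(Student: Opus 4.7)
The plan is to bootstrap from $L_2$-boundedness via a Cotlar-type identity. By Remark~\ref{Rmk=GaussianModular} the invariance of $\cHC^{\pm}$ under $A^{it}$ ensures that $H_\epsilon$ commutes with the modular automorphism group, so the assignment \eqref{Eqn=LpHilbert} is well-defined on the dense subspace $D^{\frac{1}{2p}} \mathcal{A} D^{\frac{1}{2p}} \subseteq L_p(\cM)$. The $L_2$ case is immediate from Lemma~\ref{Lem=OrthH}: the identity $\varphi_\Omega(H_\epsilon(x)^\ast H_\epsilon(x)) = \varphi_\Omega(x^\ast x)$ forces the $L_2$-implementation to be an isometry, whence $c_2 = 1$.

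The heart of the argument is to specialise the Cotlar formula \eqref{Eqn=Cotlar} to $y=x$. Since $H_\epsilon$ annihilates the unit and maps each Wick monomial of positive length to itself up to a sign, $\varphi_\Omega \circ H_\epsilon = 0$ on $\mathcal{A}$, so $\cE_\Omega^\perp$ acts trivially on each $H_\epsilon(\cdot)$ term on the right of \eqref{Eqn=Cotlar}; combined with Lemma~\ref{Lem=OrthH} the formula rearranges to
\begin{equation*}
H_\epsilon(x) H_\epsilon(x)^\ast = \varphi_\Omega(xx^\ast)\cdot 1 + H_\epsilon(xH_\epsilon(x)^\ast) + H_\epsilon(xH_\epsilon(x)^\ast)^\ast - H_\epsilon(H_\epsilon(xx^\ast)^\ast)^\ast .
\end{equation*}
Taking $L_p$-norms, using $\|H_\epsilon(x)\|_{2p}^2 = \|H_\epsilon(x)H_\epsilon(x)^\ast\|_p$, the triangle inequality, the noncommutative H\"older estimates $\|H_\epsilon(xH_\epsilon(x)^\ast)\|_p \leq c_p \|x\|_{2p} \|H_\epsilon(x)\|_{2p}$ and $\|H_\epsilon(H_\epsilon(xx^\ast)^\ast)\|_p \leq c_p^{2}\, \|x\|_{2p}^{2}$, together with the contractive inclusion $\varphi_\Omega(xx^\ast) \leq \|x\|_{L_{2p}}^{2}$ (since $\varphi_\Omega$ is a state), one obtains a quadratic inequality whose solution is a recursion $c_{2p} \leq F(c_p)$.

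Iterating from $c_2 = 1$ and carefully balancing the three terms of the right-hand side against the squaring effect of the passage $L_p \to L_{2p}$ yields $c_{2^n} \leq 2^{n\gamma/2}$ with $\gamma = \log_3 2$. For arbitrary $p \geq 2$ the bound $c_p \leq Cp^{\gamma/2}$ with $C = 2^{\gamma/2}$ then follows by complex interpolation between consecutive dyadic bounds $c_{2^n}$ and $c_{2^{n+1}}$. For $1 < p < 2$ one appeals to duality: the decomposition $L_2(\cM) = \mathbb{C} D^{1/2} \oplus V^+ \oplus V^-$ is orthogonal (the subspaces $V^{\pm}$ are closures of Wick vectors whose first factor lies in $\cHC^{\pm}$, mutually orthogonal in $L_2$ by the assumption $\cHC^+ \perp \cHC^-$ in $\cH$), so the $L_2$-implementation of $H_\epsilon$ is the self-adjoint involution $P_{V^+}-P_{V^-}$; hence $H_\epsilon$ is KMS-symmetric and the $L_{p'}$-bound for $p' < 2$ is dual to the $L_p$-bound at the conjugate exponent.

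The principal obstacle is extracting the precise exponent $\gamma/2 = (\log_3 2)/2$. A crude pass with the triangle inequality and H\"older in the identity above only gives $c_{2p} \leq 2c_p + O(1)$, translating to $c_p = O(p)$, which is far weaker than the claimed $p^{\gamma/2}$. To sharpen the rate one must use the finer algebraic structure---in particular the relation $H_\epsilon^2 = \Id - P_\Omega$ on $\mathcal{A}$ (immediate from Definition~\ref{Dfn=FreeHilbert}), which allows the iterated term $H_\epsilon(H_\epsilon(xx^\ast)^\ast)^\ast$ to be re-expressed via lower-order quantities involving $xx^\ast$ itself, so that the positivity of $H_\epsilon(x)H_\epsilon(x)^\ast$ and $xx^\ast$ can be combined on the same side of the inequality. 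It is this three-term balancing, tuned so that each doubling of $p$ contributes a factor $2^{\gamma/2}$ rather than a constant, that is responsible for the appearance of $\log_3$ in the exponent.
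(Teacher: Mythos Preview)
Your overall architecture is exactly the paper's: $L_2$ base case from Lemma~\ref{Lem=OrthH}, specialise the Cotlar identity \eqref{Eqn=Cotlar} to $y=x$, apply triangle inequality and H\"older to obtain a recursion $c_{2p}\le F(c_p)$, iterate from $c_2=1$, then interpolate for non-dyadic $p\ge 2$. Your duality argument for $1<p<2$ is in fact more explicit than the paper, which does not spell that case out.

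The gap is in your final paragraph. You assert that the ``crude'' pass only yields $c_{2p}\le 2c_p+O(1)$ and then speculate that the relation $H_\epsilon^2=\Id-P_\Omega$ and a ``three-term balancing'' are needed to reach the stated exponent. Neither is true, and the paper uses no such refinement. The point you are missing is that the inequality coming from triangle and H\"older is \emph{quadratic} in $\Vert H_\epsilon(x)\Vert_{2p}$, not linear: writing $t=\Vert H_\epsilon(x)\Vert_{2p}$ and $a=\Vert x\Vert_{2p}$ one gets
\[
t^2 \le 2c_p\,a\,t + (c_p^2+1)a^2,
\]
and solving this quadratic gives $t\le \bigl(c_p+\sqrt{2c_p^2+1}\bigr)a$, i.e.\ $c_{2p}\le c_p+\sqrt{2c_p^2+1}$. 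This is precisely the recursion the paper writes down, obtained with nothing beyond the estimates you already listed. Iterating it directly yields the power bound; no appeal to $H_\epsilon^2$ or to positivity beyond what is already in the quadratic is made.

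A secondary remark: do not expend effort trying to reproduce the specific constant $\gamma=\log_3 2$ with exponent $\gamma/2$. The paper's own proof ends with the exponent $\gamma=\log 2/\log(1+\sqrt{3})$, which already disagrees with the theorem statement, and the recursion $c_{2p}\le (1+\sqrt{3})c_p$ actually produces the exponent $\log_2(1+\sqrt{3})$. The method is what you should match, and you have; the precise numerical value of the exponent is inconsistently recorded in the source and is not what your speculative last paragraph would recover in any case.
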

\begin{proof}
	For $p=2$ the map $H_\epsilon$ defines a contraction on $L_2(\cM)$ and so the statement is true.
	
The space $D^{\frac{1}{2p}} \mathcal{A} D^{\frac{1}{2p}}$ is dense in $L_p(\cM)$.   As $H_\epsilon: \mathcal{A} \rightarrow \mathcal{A}$ commutes with the modular automorphism group of $\varphi_{\Omega}$, c.f. Remark \ref{Rmk=GaussianModular}, it follows from a computation similar to \eqref{Eqn=KMSSym} that,
\begin{equation}\label{Eqn=HSigmaCom}
\begin{split}
& H_{\epsilon} ( D^{\frac{1}{p}} x) =
  D^{\frac{1}{p}}  H_{\epsilon}(x).
\end{split}
\end{equation}
We now estimate $c_{2p}$ in terms of $c_p$.
By Cotlar's identity \eqref{Eqn=Cotlar} and Lemma \ref{Lem=OrthH} we get that,
\[
\begin{split}
& \Vert  D^{\frac{1}{2p}} H_\epsilon(x) \Vert_{2p}^{2} \\
=  & \Vert D^{\frac{1}{2p}}  H_{\epsilon}(x)  H_{\epsilon}(x)^\ast  D^{\frac{1}{2p}} \Vert_{p} \\
\leq & 
\Vert D^{\frac{1}{2p}} \mathcal{E}_{\Omega}^\perp \left( H_{\epsilon}(x)  H_{\epsilon}(x)^\ast \right)  D^{\frac{1}{2p}} \Vert_{p} + \Vert D^{\frac{1}{2p}} \varphi_{\Omega}\left( H_{\epsilon}(x)  H_{\epsilon}(x)^\ast \right)  D^{\frac{1}{2p}} \Vert_p  
 \\
\leq & \Vert   D^{\frac{1}{2p}} H_{\epsilon}( x H_{\epsilon}(x)^\ast)  D^{\frac{1}{2p}} \Vert_p + \Vert  D^{\frac{1}{2p}} H_{\epsilon}( x H_{\epsilon}(x)^\ast)^\ast  D^{\frac{1}{2p}} \Vert_p \\
& \qquad \qquad  +  \Vert  D^{\frac{1}{2p}} H_{\epsilon}(H_{\epsilon}(xx^\ast)^\ast)^\ast  D^{\frac{1}{2p}} \Vert_p + 
\Vert   D^{\frac{1}{2p}} \varphi_{\Omega}\left( x x^\ast \right)  D^{\frac{1}{2p}} \Vert_{2p}
\\
\leq & c_p \Vert  D^{\frac{1}{2p}} x \Vert_{2p} \Vert  H_{\epsilon}(x)^\ast  D^{\frac{1}{2p}} \Vert_{2p} +
 c_p \Vert  D^{\frac{1}{2p}} x \Vert_{2p} \Vert  H_{\epsilon}(x)^\ast  D^{\frac{1}{2p}} \Vert_{2p} \\
 & \qquad + c_p^2 \Vert  D^{\frac{1}{2p}} x\Vert_{2p}^2  + \Vert  D^{\frac{1}{2p}} x \Vert_{2p}^2 \\
= &  2 c_p \Vert  D^{\frac{1}{2p}} x \Vert_{2p} \Vert  H_{\epsilon}(x)^\ast  D^{\frac{1}{2p}} \Vert_{2p}  + (c_p^2+1) \Vert  D^{\frac{1}{2p}} x \Vert_{2p}^2. 
 \end{split}
\]
By density we conclude that $c_{2p} \leq  c_p + \sqrt{2 c_p^2 +1}$. In particular $c_{2p} \leq  (1+ \sqrt{2} ) c_p$ from which it follows that for $p$ a power of 2 we get that $c_p \leq p^{\gamma}$ with $\gamma = \frac{\log(2)}{\log(1 + \sqrt{3})}$.  For other $p \geq 2$ the result follows by interpolation, see \cite{BerghLofstrom}, \cite{TerpII}.
\end{proof}

\begin{rmk}
 We do not know what the optimal constants are for the norm of  $H_\epsilon$ on $L_p(\cM)$. We also leave it as an open question whether the Hilbert transform is a bounded map $L_\infty \rightarrow \BMOa$ or even  $\BMOa \rightarrow \BMOa$ as for the classical Hilbert transform. 
\end{rmk}

\subsection{Khintchine type BMO inequalities and multipliers}
We provide examples of $L_\infty \rightarrow \BMOa$-multipliers on free Araki-Woods factors. Earlier results on non-commutative  $L_\infty \rightarrow \BMOa$-multipliers in the tracial setting were obtained by Mei \cite{MeiShortNote} but here we do not need to appeal to lacunary sets. We  use  the Markov semi-group $\cS = (\Psi_{t} = \Phi_{e^{-t}})_{t \geq 0}$ that is determined by
\begin{equation}\label{Eqn=PhiGroup}
\Phi_{r}: W(\xi_1 \otimes \ldots \otimes \xi_n) \mapsto r^n W(\xi_1 \otimes \ldots \otimes \xi_n), \qquad 0 \leq r \leq 1. 
\end{equation}
This semi-group is well-known to be Markov, KMS-symmetric and $\varphi_\Omega$-modular. 

\begin{prop} \label{Prop=BMOKhintchine}
	Suppose that $\cH$ is infinite dimensional. Let $(e_k)_k$ be a set of vectors in $\cHC$ that are orthogonal in $\cH$. 
For $i = (i_1, \ldots, i_n)$ a multi-index  set $e_{i} = e_{i_1} \otimes \ldots \otimes e_{i_n}$ and $\overline{e}_{i} = \overline{e}_{i_1} \otimes \ldots \otimes \overline{e}_{i_n}$. Take $F$ a set of multi-indices such that $\langle e_{i_1}, e_{j_1} \rangle_A = \langle \overline{e}_{i_1}, \overline{e}_{j_1} \rangle_A   = 0$   if $i \not = j$.  We have, for any $x = \sum_{i \in F} c_i W(e_i)$ with $c_i \in \mathbb{C}$  a finite sum of Wick operators whose frequency support lies in  $F$,  that,
\[
\begin{split}
& \Vert x \Vert_{\BMO_{\mathcal{S}}}^2 \leq 2 \max \left\{ \Vert \sum_{i} \vert c_i \vert^2 W(e_i)^\ast W(e_i) \Vert,  \Vert \sum_{i} \vert c_i \vert^2 W(e_i) W(e_i)^\ast \Vert  \right\}.
\end{split}
\]
 
 \end{prop}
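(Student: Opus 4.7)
The plan is to expand the BMO fluctuation $\Psi_t(x^\ast x)-\Psi_t(x)^\ast\Psi_t(x)$ using the Wick product formula \eqref{Eqn=Split}, exploit the orthogonality of the first components in $F$ to force the off-diagonal ($i\neq j$) contributions to cancel, and then apply Kadison-Schwarz to the resulting diagonal sum. The row estimate will follow from a symmetric computation in which the companion hypothesis $\langle\overline{e}_{i_1},\overline{e}_{j_1}\rangle_A=0$ plays the role of its column analogue.

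Iterating \eqref{Eqn=Split} gives, for any $i,j\in F$,
\[
W(e_i)^\ast W(e_j)=\sum_{k=0}^{\min(|i|,|j|)}\left(\prod_{l=0}^{k-1}\langle e_{j_{l+1}},e_{i_{l+1}}\rangle_A\right) W\bigl(\overline{e_{i_{|i|}}}\otimes\cdots\otimes\overline{e_{i_{k+1}}}\otimes e_{j_{k+1}}\otimes\cdots\otimes e_{j_{|j|}}\bigr).
\]
For $i\neq j$ the $l=0$ factor $\langle e_{j_1},e_{i_1}\rangle_A$ vanishes by hypothesis, and since every $k\geq 1$ term contains this factor, the only surviving contribution is the $k=0$ summand, which is a pure Wick word of length $|i|+|j|$. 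Because $\Psi_t$ multiplies a Wick word of length $n$ by $e^{-tn}$, one has $\Psi_t(W(e_i)^\ast W(e_j))=\Psi_t(W(e_i))^\ast\Psi_t(W(e_j))$ whenever $i\neq j$, so the off-diagonal contributions to $\Psi_t(x^\ast x)-\Psi_t(x)^\ast\Psi_t(x)$ cancel exactly, leaving
\[
\Psi_t(x^\ast x)-\Psi_t(x)^\ast\Psi_t(x)=\sum_i |c_i|^2\bigl(\Psi_t(W(e_i)^\ast W(e_i))-\Psi_t(W(e_i))^\ast\Psi_t(W(e_i))\bigr).
\]

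By Kadison-Schwarz for the ucp map $\Psi_t$, each summand is nonnegative and dominated by $\Psi_t(W(e_i)^\ast W(e_i))$. Summing and using that $\Psi_t$ is contractive on $\cM$, the operator norm of the right-hand side is at most $\|\sum_i|c_i|^2 W(e_i)^\ast W(e_i)\|$, and taking the supremum over $t\geq 0$ yields the column bound
\[
\|x\|_{\BMO_{\cS}^c}^2\leq\Bigl\|\sum_i|c_i|^2 W(e_i)^\ast W(e_i)\Bigr\|.
\]
A parallel argument applied to $\Psi_t(xx^\ast)-\Psi_t(x)\Psi_t(x)^\ast$, in which the companion hypothesis $\langle\overline{e}_{i_1},\overline{e}_{j_1}\rangle_A=0$ supplies the required off-diagonal cancellation, gives $\|x\|_{\BMO_{\cS}^r}^2\leq\|\sum_i|c_i|^2 W(e_i)W(e_i)^\ast\|$. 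Taking the maximum of the two produces the claim; the factor $2$ in the statement appears as comfortable slack.

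The main care point will be the iterated application of \eqref{Eqn=Split}: each iteration inherits the previous top-level contraction factor multiplicatively, so one must carefully verify that vanishing of the single $l=0$ inner product genuinely suffices to annihilate every deeper-level contraction, which is precisely why the hypothesis only needs to be imposed on first components rather than on every tensor slot.
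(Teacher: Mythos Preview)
Your proof is correct and follows essentially the same route as the paper's: both reduce the off-diagonal terms to pure Wick words of length $|i|+|j|$ via the product formula \eqref{Eqn=Split} (so that $\Psi_t$ acts on them exactly as $\Psi_t(W(e_i))^\ast\Psi_t(W(e_j))$), leaving only a diagonal sum to estimate. The one genuine difference is in how the diagonal is bounded: the paper applies a crude triangle inequality to $\sum_i |c_i|^2 (r^{2|i|} - \Phi_r)(W(e_i)^\ast W(e_i))$ and picks up the factor $2$, whereas you observe via Kadison--Schwarz that each diagonal summand is sandwiched between $0$ and $\Psi_t(W(e_i)^\ast W(e_i))$, so the whole positive sum is dominated in operator order by $\Psi_t\bigl(\sum_i |c_i|^2 W(e_i)^\ast W(e_i)\bigr)$. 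Your bound is therefore sharper by a factor of $2$, and your remark that the stated constant is slack is justified.
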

\begin{proof}
Using the definition of $x$ and the triangle inequality,
\begin{equation}\label{Eqn=BMOAraki}
	\begin{split}
 &  \Vert \Phi_r(x)^\ast \Phi_r(x) - \Phi_r(x^\ast x) \Vert  \\
 \leq & \Vert \sum_{i =j}   \overline{c_i} c_j (r^{\vert i \vert + \vert j \vert}   -   \Phi_r )( W(e_i)^\ast W(e_j)   ) \Vert + \Vert \sum_{ i \not = j}   \overline{c_i} c_j (r^{\vert i \vert + \vert j \vert}   -   \Phi_r )( W(e_i)^\ast W(e_j)   ) \Vert 
 \end{split}
 \end{equation}
 If $i \not = j$ we get that $\langle e_{i_{1}}, e_{j_{1}} \rangle_A = 0$, so that by \eqref{Eqn=Split} we see that  $ W( e_i)^\ast W( e_j) = W(e_{i}^\ast \otimes e_j)$ where $e_i^\ast = \overline{e}_{i_n} \otimes \ldots 
 \otimes  \overline{e}_{i_1}$ so that, 
 \[
 (r^{\vert i \vert + \vert j \vert}   -   \Phi_r )( W( e_i)^\ast W( e_j)   ) =
 (r^{\vert i \vert + \vert j \vert}   -   r^{\vert i \vert + \vert j \vert} )(   W( e_{i}^\ast  \otimes  e_j) ) = 0.
 \]
 Therefore we continue \eqref{Eqn=BMOAraki} by  using that $\Phi_r$ is a ucp map and that  the expression $\sum_{i}    \vert c_i \vert^2      W( e_i)^\ast W( e_i)$ is a summation of positive elements,
 \[
 \begin{split}
 \Vert \Phi_r(x)^\ast \Phi_r(x) - \Phi_r(x^\ast x) \Vert  = &  \Vert \sum_{i}    \vert c_i \vert^2 (r^{2 \vert i \vert}   -   \Phi_r )( W( e_i)^\ast W( e_i)   ) \Vert \\  
  \leq & 2 \Vert \sum_{i }   \vert c_i \vert^2 ( W( e_i)^\ast W( e_i)   ) \Vert.
  \end{split}
	\]
 This shows that 
 \[
 \Vert x \Vert_{\BMO^c}^2 \leq 2 \Vert \sum_{i} \vert c_i \vert^2 W(e_i)^\ast W(e_i) \Vert.
 \]
 Then in the same way using the orthogonality $\langle \overline{e}_{i_1}, \overline{e}_{j_1} \rangle = 0, i \not = j$ we get that  $\Vert x \Vert_{\BMO^r}^2  = \Vert x^\ast \Vert_{\BMO^c}^2  \leq 2 \Vert \sum_{i} \vert c_i \vert^2 W(e_i) W(e_i)^\ast \Vert$.
\end{proof}

Let $\delta_F$ be the indicator function on a set $F$.  The following Corollary \ref{Cor=Int} is a consequence of our interpolation result of Theorem \ref{Thm=InterpolationBMO}. We call $n$ the length of a multi-index $i = (i_1, \ldots, i_n)$.

\begin{cor}\label{Cor=Int}
	Take $F$ a set of multi-indices of length at most $n$ such that $\langle e_{i_1}, e_{j_1} \rangle_A = \langle \overline{e}_{i_1}, \overline{e}_{j_1} \rangle_A   = 0$   if $i \not = j$.  The projection $P_F: W(e_i) \mapsto \delta_F(i) W(e_i)$
	extends to a bounded map $\cM \rightarrow \BMO_{\cS}(\cM)$. Consequently, $P_F$ determines a bounded map $P_F^{(p)}: L_p(\cM) \rightarrow L_p(\cM)$ given by $P_F^{(p)}:  D^{\frac{1}{2p}} x D^{\frac{1}{2p}} \mapsto  D^{\frac{1}{2p}}P_F( x) D^{\frac{1}{2p}}$. 
\end{cor}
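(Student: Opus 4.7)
The plan is to first establish the endpoint estimate $P_F\colon \cM \to \BMO_{\mathcal{S}}(\cM)$ directly from Proposition \ref{Prop=BMOKhintchine}, and then apply Theorem \ref{Thm=InterpolationBMO} together with trivial $L_2$-boundedness to deduce the $L_p$ statement.

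\textbf{Endpoint estimate.} Fix a finite sum $x = \sum_j c_j W(e_j) \in \mathcal{A}$ indexed by multi-indices of length at most $n$. Then $P_F(x) = \sum_{j \in F} c_j W(e_j)$ still has frequency support in $F$, so Proposition \ref{Prop=BMOKhintchine} gives
\[
\|P_F(x)\|_{\BMO_{\mathcal{S}}}^{2} \leq 2 \max\Bigl(\bigl\|\textstyle\sum_{j \in F}|c_j|^{2} W(e_j)^{\ast} W(e_j)\bigr\|,\; \bigl\|\textstyle\sum_{j \in F}|c_j|^{2} W(e_j) W(e_j)^{\ast}\bigr\|\Bigr).
\]
Since every $j$ has length at most $n$, the Wick formula yields a uniform bound $\|W(e_j)\| \leq C_n$ depending only on $n$ and the norms $\|e_k\|_A$; hence the column sum is at most $C_n^{2} \sum_j |c_j|^{2}$. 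The orthogonality assumptions $\langle e_{i_1}, e_{j_1}\rangle_A = \langle \overline{e_{i_1}}, \overline{e_{j_1}} \rangle_A = 0$ for $i \neq j$ imply $L_2$-orthogonality of $\{W(e_j)\}$ under $\varphi_\Omega$, so $\sum_j |c_j|^{2} \|W(e_j)\Omega\|^{2} = \|x\|_{L_2(\cM)}^{2} \leq \|x\|_{\cM}^{2}$, and therefore $\sum_j |c_j|^{2} \leq c_n^{-1}\|x\|_{\cM}^{2}$ for some $c_n > 0$. The row estimate is identical, giving $\|P_F(x)\|_{\BMO_{\mathcal{S}}} \leq K_n \|x\|_{\cM}$ on the $\sigma$-weakly dense $\ast$-algebra $\mathcal{A}$; this proves the first claim.

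\textbf{Interpolation step.} The semi-group $\mathcal{S} = (\Phi_{e^{-t}})_{t \geq 0}$ is $\varphi_\Omega$-modular (Remark \ref{Rmk=GaussianModular}), KMS-symmetric, and, by the construction of Section \ref{Sect=Dilation}, admits a reversed Markov dilation with a.u.\ continuous path; hence Theorem \ref{Thm=InterpolationBMO} provides $[\BMO_{\mathcal{S}}^{\circ}(\cM), L_{2}^{\circ}(\cM)]_{1/q} \approx L_{2q}^{\circ}(\cM)$ for every $q > 1$. Since $\{W(e_j)\}_{j \in F}$ is $L_2$-orthogonal, $P_F$ is a contraction on $L_2(\cM)$. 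Complex interpolation of the operator $P_F$ between the compatible couples $(\cM, L_2)$ and $(\BMO_{\mathcal{S}}^{\circ}, L_2)$, combined with $[L_\infty, L_2]_{1/q} = L_{2q}$ on the source side, then yields bounded extensions $P_F^{(2q)}\colon L_{2q}(\cM) \to L_{2q}(\cM)$ for all $q > 1$; duality via $(P_F^{(p)})^{\ast} = P_F^{(p')}$, a consequence of $P_F$ being an $L_2$-self-adjoint projection, fills in the range $1 < p < 2$.

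\textbf{Main difficulty.} The only non-trivial ingredient is the reversed Markov dilation with a.u.\ continuous path for $\mathcal{S}$ in the non-tracial setting, which is precisely the content of Section \ref{Sect=Dilation}. Modulo that, the proof is an immediate application of Proposition \ref{Prop=BMOKhintchine} followed by operator-space complex interpolation.
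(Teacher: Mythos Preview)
Your proof is correct and follows essentially the same route as the paper's: bound $\|P_F(x)\|_{\BMO_{\cS}}$ via Proposition~\ref{Prop=BMOKhintchine}, use the uniform bound $\|W(e_i)\|\leq C_n$ for $|i|\leq n$ together with $L_2$-orthogonality of the $W(e_i)$ to dominate by $\|x\|_2\leq\|x\|_\infty$, and then appeal to the Markov dilation of Section~\ref{Sect=Dilation} and Theorem~\ref{Thm=InterpolationBMO} to interpolate. Your explicit choice of $L_2$ as the second endpoint and the duality argument for $1<p<2$ simply spell out what the paper compresses into ``by interpolation, see Theorem~\ref{Thm=InterpolationBMO}''.
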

\begin{proof}
	Let $x = \sum_{i} c_i W(e_i) \in \mathcal{A}$. 
	 As for any $i \in F$ its length is bounded by $n$  we have that $\Vert W(e_i)^\ast W(e_i) \Vert \leq C$ for some constant $C$. We get that
	 $	  \sum_{i \in F} \vert c_i \vert^2 \Vert  W(e_i)^\ast W(e_i) \Vert \leq C
	 \sum_{i \in F} \vert c_i \vert^2  = \Vert x \Vert_2^2$. 
	 	 Proposition \ref{Prop=BMOKhintchine} shows therefore that we get the first inequality in
	\[
	  \Vert  P_F(x)  \Vert_{\BMO_{\mathcal{S}}} \leq \sqrt{2} \Vert P_F(x) \Vert_2 \leq \sqrt{2} \Vert x \Vert_2 \leq \sqrt{2} \Vert x \Vert_\infty.
	\] 
In Section \ref{Sect=Dilation} we show that   $(\Psi_t)_{t\geq 0}$ has a Markov dilation with a.u. continuous path.	We then get $L_p \rightarrow L_p$ boundedness of $P_F$ by interpolation, see Theorem \ref{Thm=InterpolationBMO}.   
\end{proof}

\subsection{A Markov dilation for the radial semi-group of free Araki-Woods factors}\label{Sect=Dilation}
In this Section we show that the radial semi-group on free Araki-Woods factors has a good reversed Markov dilation. 
The first step in the proof of  Proposition \ref{Prop=MarkovDilate} is  due to Ricard (see the final remarks of \cite{RicardDilation}). We need to find a suitable analogue for semi-groups which we do by an ultraproduct argument. Similar techniques were used in  \cite{Arhancet}, \cite{ArhancetEtAl}  though in this case through quantization we can give a shorter argument directly on the Hilbert space level, see also the comment below this proposition.

\begin{prop}\label{Prop=MarkovDilate}
 For $t \geq 0$ consider the Markov semi-group  $\Psi_t = \Phi_{e^{-t}}$ where $\Phi_r, 0 < r \leq 1$ is the Markov map on $\cM$ determined by \eqref{Eqn=PhiGroup}.
  $\Psi_t$ admits a $\varphi_{\Omega}$-modular Markov dilation with a.u. continuous path. 
 \end{prop}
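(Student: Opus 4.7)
The strategy is to exploit the fact that $\Psi_t = \Gamma(e^{-t}\Id_{\cHR})$ is the second quantization of a scalar contraction semigroup commuting with $V_t = A^{it}$, so any orthogonal dilation of $e^{-t}\Id_{\cHR}$ at the real Hilbert space level will transfer functorially through $\Gamma$ to a Markov dilation of $\Psi_t$; moreover \eqref{Eqn=GaussianModular} then automatically yields $\varphi_\Omega$-modularity. For each $m \in \N$ set $r_m = e^{-1/m}$ and construct, following Ricard \cite{RicardDilation}, a reversed Markov dilation $(\widetilde\cN^{(m)}_{k/m}, \pi^{(m)}_{k/m}, \cE^{(m)}_{k/m})_{k \in \NN}$ for the discrete semigroup $(\Phi_{r_m}^k)_{k\in\NN}$ by embedding $\cHR$ isometrically in a larger real Hilbert space carrying an orthogonal dilation of $r_m \Id_{\cHR}$, extending $V_t$ coordinate-wise to preserve modularity, and applying $\Gamma$ to produce a von Neumann algebra $\widetilde\cM^{(m)}$ equipped with the desired decreasing filtration and state-preserving $\ast$-homomorphisms. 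The reversed martingale identity \eqref{Eqn=RevMarkovDilationRelation} is then verified on the dense $\ast$-algebra $\cA$ of Wick polynomials by a direct Wick-formula computation via Theorem \ref{Thm=Wick}.

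To obtain a continuous-time dilation, fix a nonprincipal ultrafilter $\cU$ on $\N$ and set $\cN := \prod_m^{\cU} \widetilde\cM^{(m)}$, equipped with the ultraproduct of the ambient states. For $t \geq 0$, assemble the discretised maps $\pi_t^{(m)} := \pi^{(m)}_{\lfloor mt\rfloor/m}$ and $\cE_t^{(m)} := \cE^{(m)}_{\lfloor mt\rfloor/m}$ into ultraproducts $\pi_t$ and $\cE_t$, and take $\cN_t := \prod_m^{\cU} \widetilde\cN^{(m)}_{\lfloor mt\rfloor/m}$. The defining identity \eqref{Eqn=RevMarkovDilationRelation} for $(\cN_t, \pi_t, \cE_t)$ then follows from its discrete counterpart combined with the limits $r_m^{\lfloor ms\rfloor - \lfloor mt\rfloor} \to e^{-(s-t)}$ and norm continuity of $s \mapsto \Psi_s(x)$ for $x \in \cA$; modularity \eqref{Eqn=ModDilation} and state preservation pass through the ultraproduct because they hold uniformly in $m$ by construction.

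The main technical obstacle is establishing a.u.\ continuous path for the martingale $m(x) = (\pi_t \circ \Psi_t(x))_{t \geq 0}$ on a $\sigma$-weakly dense subset $B \subseteq \cM$. Take $B = \cA$ and $x = W(\xi_1 \otimes \cdots \otimes \xi_n)$; then $\Psi_t(x) = e^{-nt}x$, so $m_t(x) = e^{-nt}\pi_t(x)$ and the question reduces to continuity of $t \mapsto \pi_t(x)$. At each discrete scale the map $k \mapsto \pi^{(m)}_{k/m}(x)$ is a Wick operator of length $n$ built from iterates of the isometric embeddings defining the discrete dilation, so incrementing $k$ by $1$ should produce $L_2$-martingale differences of norm $O(1/m)$ uniformly on compact $t$-intervals. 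Combining this uniform smallness with the noncommutative Doob maximal inequality of \cite{JungeCrelle} in the ultraproduct algebra then yields a projection $e \in \cN$ of mass at least $1-\epsilon$ on which $t \mapsto m_t(x)e$ is continuous on $[0,T]$, as required by Definition \ref{Dfn=AUCt}. Extracting such a projection coherently across the ultraproduct scales, while controlling uniform norm estimates on the discrete dilations, is the delicate point of the argument.
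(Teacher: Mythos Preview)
Your overall strategy matches the paper's: discretise, build Ricard-style dilations at the Hilbert space level, pass to an ultraproduct, and quantise via $\Gamma$. The crucial difference is the \emph{order} in which you take the ultraproduct and apply second quantisation, and this is what makes your a.u.\ continuity step hard whereas the paper's is trivial.

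The paper takes the ultraproduct at the \emph{Hilbert space} level: it forms $\cK_\cU = \prod_{m,\cU} \cK_{2^{-m}}$, assembles unitaries $V_t$ and projections $Q_t$ there, and verifies by an explicit matrix computation that $t \mapsto V_t K\xi$ and $t \mapsto Q_t$ are strongly continuous on $\cK_\cU$. Only \emph{then} does it apply $\Gamma$ once to the resulting continuous-time Hilbert space dilation $Q_s J_t = J_s T_{s-t}$. The payoff is immediate: for $x = W(\xi_1 \otimes \cdots \otimes \xi_n) \in \cA$ the martingale is $m_t(x) = W(Q_t J_t \xi_1 \otimes \cdots \otimes Q_t J_t \xi_n)$, and since creation operators depend norm-continuously on their argument, $t \mapsto m_t(x)$ is actually \emph{norm} continuous. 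No projection $e$ needs to be produced; Definition~\ref{Dfn=AUCt} is satisfied with $e = 1$.

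By contrast you quantise first and then form a von Neumann algebra ultraproduct $\prod_m^\cU \widetilde\cM^{(m)}$. This forces you to extract the a.u.\ continuity projection $e$ coherently across the ultraproduct scales, which you rightly flag as the delicate point --- and it is genuinely delicate, because Doob's inequality at level $m$ gives you a projection $e_m$ depending on $\epsilon$ and $T$, but there is no a priori reason these assemble into an element of the Ocneanu ultraproduct with the right properties. Your $L_2$-increment estimate $O(1/m)$ is also not quite what one gets (the explicit dilation gives increments of order $(1-e^{-1/m})^{1/2} \sim m^{-1/2}$), though this is a minor issue compared to the projection-selection problem. Reordering the two operations as the paper does dissolves the obstacle entirely.
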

\begin{proof} For $t \geq 0$. Set $T_t \in B(\cH)$ by $T_t \xi = e^{-t} \xi$.   The proof splits in steps. 
	
	\vspace{0.3cm}
	
	 \noindent {\it Step 1: Constructing a dilation for subsemi-groups of $(T_t)_{ t \geq 0}$.} 
	 	 	Firstly for each $t \geq 0$ we may find a Hilbert space $\cK$ containing $\cH$ with orthogonal projection $P_{\cH}: \cK_t \rightarrow \cH$ and a unitary $U_t \in B(\cK)$ such that  for every $l \in \mathbb{N}$,
	\begin{equation}\label{Eqn=DilateU}
	P_{\cH} U_t^l \vert_{\cH} =  T_t^l = T_{tl}.
	\end{equation}
	Indeed, the Hilbert space $\cK = \ell_2(\mathbb{Z}) \otimes \cH$ with unitary  $U_t, t = - \log(r)$ acting on the first tensor leg by 
	\[
	\left( 	
	\begin{array}{cccccccccccccc}
\ddots & \vdots	&   \vdots     &       &  &&&&  &&&&&\\
\ldots &  1	&  0      & 0       & \ldots &&&&  &&&&&\\
\ldots &  0  & 1      & 0      & \ldots  &&&& &&&&&\\
\ldots &  0  & 0      & 1      & \ldots&&&  &&&&&&\\
	   & \vdots & \vdots & \vdots & \ddots&&&& &&&&&\\
	   & 	  	&        &	      & \ldots   & 1    & 0            &  0  &       0 &\ldots &&&&\\
	 	&&&	&      				\ldots   & 0    & \sqrt{1-r^2}             & r &0& \ldots&&&& \\
	    &&&  & 					\ldots   & 0	& r&   \sqrt{1-r^2}        & 0&    \ldots&&&	& \\
	       & 	  	&        &	      & \ldots   & 0    & 0            &  0  &       1 &\ldots &&&&\\
&&&&&&&&& \ddots & \vdots	&   \vdots     &       &  \\
&&&&&&&&&\ldots &  1	&  0      & 0       & \ldots  \\
&&&&&&&&&\ldots &  0  & 1      & 0      & \ldots   \\
&&&&&&&&&\ldots &  0  & 0      & 1      & \ldots \\
&&&&&&&&& & \vdots & \vdots & \vdots & \ddots 
	\end{array}
	\right),
	\]
	where the bottom left entry  $r$ is located at position $(0,0)$. 
	So $U_t$ acts as a shift operator on $\ell_2(\mathbb{Z} \backslash \{ 0,1 \} ) \otimes \cH$.  $\cH$ is a subspace of $\cK$ by the embedding 
	\[
	J: \xi \mapsto \delta_0 \otimes \xi \in  \ell_2(\mathbb{Z}) \otimes \cH = \ell_2(\mathbb{Z}, \cH).
	\]	
	\eqref{Eqn=DilateU} is then elementary to check (see also \cite[Theorem 1.1]{PisierBook}). We let $P_{t,n}$ be the orthogonal projection onto the closed linear span of 
	$\left\{ U_{t}^l  \xi \mid   l \geq n, \xi \in \cH  \right\}$.
	We get that for $\xi, \eta \in \cH$ we have for $l,n \geq 0$, 
	\begin{equation}\label{Eqn=UComp}
	\langle \xi, U_t^{n + l} \eta \rangle 
	= \langle \xi, P_{\cH} U_t^{n+l} P_{\cH} \eta \rangle 
	= \langle  T_t^{n+l} \xi,  \eta \rangle 
	= \langle  T_t^{n} \xi,  U_t^{l} \eta \rangle 
	= \langle U_t^n T_t^{n} \xi,  U_t^{n+l} \eta \rangle.
	\end{equation}
	Which shows that $P_{t,n} P_{\cH} \xi = U_t^n T_t^n \xi$. Moreover from \eqref{Eqn=UComp} we   get for $n \geq k$  that 
		$\langle U_{t}^k \xi, U_t^{n + l} \eta \rangle  = \langle U_{t}^{n} T_t^{n-k} \xi,  U_t^{n+l} \eta \rangle$.
	So we find that $P_{t,n} U_{t}^{k} \xi = U_t^n T_t^{n-k} \xi$. So if we put $J_{t,n} =  U_{t}^{n} J: \cH \rightarrow \cK$ we get that 
	 \begin{equation}\label{Eqn=DiscrDilation}
	   P_{t,n} J_{t,k} = J_{t,n} T^{n-k}_t, \qquad n \leq k.
	 \end{equation}
	 This is a discrete Hilbert space version of the reversed Markov dilation property \eqref{Eqn=RevMarkovDilationRelation}.

	 \vspace{0.3cm}

	 \noindent {\it Step 2: Constructing a Markov dilation.} We shall now construct a continuous version of \eqref{Eqn=DiscrDilation}. To do so, for $t \geq 0$ let $\cK_t := \cK$ be the Hilbert space as in the previous paragraph and let $J_{t,n}: \cH \rightarrow \cK_t$ be the injection as before. Also let $P_{t,n}$ and $U_t$ be  as before.

	Set groups $\sfG_m = 2^{-m} \mathbb{Z}$ and $\sfG = \cup_{m \geq 1} \sfG_m$. The group $\sfG$ is understood as a topological group with the Euclidian topology inherited from $\mathbb{R}$. 	
	Let $\cU$ be a non-principal ultrafilter on $\mathbb{N}$. Consider $\cK_\cU = \prod_{m, \cU} \cK_{2^{-m}}$. Let $K: \cH \rightarrow \cK_\cU$ be the embedding sending $\xi$ to the constant family $(\xi)_{\cU}$. Let $P_{\cH} = K^\ast$ be the projection onto $\cH$.   
		For $t \in \sfG$ we define the unitary $V_{t,m}$ on $\cK_{ 2^{-m} }$ by
	\[
	 V_{t,m}   =
	 \left\{
	 \begin{array}{ll}
	 	 U_{ 2^{-m} }^{t 2^m}     &  {\rm  if } \:   t \in \sfG_m, \\
	 	  \Id_{\cK_{ 2^{-m}}}     &  {\rm otherwise.}
	 \end{array}
	 \right.
	\]
	Then for $t \in \sfG$ set $V_t =( V_{t,m})_{\cU}$ which is a unitary on $\cK_{\cU}$. We claim that the assignment 
	\begin{equation}\label{Eqn=sfGEmbedding}
	\sfG \ni t \mapsto  V_t P_{\cK}
	\end{equation}
	 is strong-$\ast$ continuous. Indeed, let $\xi \in \cH$ be a unit vector. Let $t,s \in \sfG$ and assume that $s \geq t \geq 0$. Then let $M$ be such that for    any  $m > M$ we have $s, t \in \sfG_m$. Fix such $m > M$. We get that
	 \[
	 \begin{split}
	   U_{2^{-m}}^{t 2^m}     \xi = &	 
	  (0, \ldots, 0, \sqrt{1-r^2}   \xi, \sqrt{1-r^2} r  \xi, \ldots, \\
	  & \ldots,  \sqrt{1-r^2} r^{t2^m -3} \xi, \sqrt{1-r^2} r^{t2^m -2} \xi, \sqrt{1-r^2} r^{t2^m -1} \xi, r^{t2^m} \xi , 0 , 0 , \ldots  ). \\
	  & 
	  \end{split}
	 \]
	 Recalling $r = e^{-2^{-m}}$ this shows that we get from a small elementary computation,
	 \begin{equation}\label{Eqn=StrongComputation}
	 \begin{split}
	& \Vert ( U_{2^{-m}}^{s 2^m}  -  U_{2^{-m}}^{t 2^m}  )   \xi  \Vert^2_2 \\
	 = & ( r^{s 2^m} - r^{t 2^m})^2
	 + (1 - r^2) \sum_{l=1}^{t 2^m} (   r^{s 2^m - l } -   r^{t 2^m - l } )^2 
	  + (1 - r^2) \sum_{l= t 2^m+ 1}^{s 2^m} (r^{s2^m - l})^2 \\
	= &  (e^{-2s}  - e^{-2t})^2 + (e^{-2s} - e^{-2r})^2 (e^{-2t} -1) + e^{-2(s-t)} (e^{-2(s-t)} - 1),
	 \end{split}
	 \end{equation}
	 which converges to 0 as $s \rightarrow t$. This shows that the unitary group $t \mapsto V_t P_{\cH}$ is strong-$\ast$ continuous.    
	We extend \eqref{Eqn=sfGEmbedding} to a strongly continuous map  $\mathbb{R} \ni t \rightarrow  V_t P_{\cH}$. 	This shows that we get an isometric embedding for every $t \in \mathbb{R}$, 
	\[
	   K_t: \cH \rightarrow \cK_{\cU}:  \xi \mapsto  V_{t}  K \xi.
	\] 
	For $t \in \sfG$  and $m \in \mathbb{N}_{\geq 1}$ we define
	\[
	  Q_{ t,m} =  
	  \left\{
	  \begin{array}{ll}
	  	  P_{2^{-m}, s 2^m}   & {\rm if } \:  t \in \sfG_m,    \\
	  	  0 				  & {\rm otherwise}.   
	  \end{array}	  
	  \right.
	\]
	Then set $Q_{t} = (Q_{ t,m})_{m, \cU}$. We claim that the mapping $\sfG \ni t \mapsto Q_t$ is decreasing and strongly continuous. Indeed we have for $t \in \sfG_m$ that $P_{2^{-m}, t 2^m} = P_{2^{-m}, 0} U_{2^{-m}}^{- t 2^m}$. Set $P = (P_{2^{-m}, 0})_{\omega}$. So that for $t \in \sfG$ we have $Q_t = P  V_t^{\ast}$. A computation similar to \eqref{Eqn=KMSSym} shows that the function $\sfG \ni t \mapsto P V_t^\ast$ is weakly continuous. But as $Q_t$ is decreasing this convergence actually holds in the strong topology (see \cite[Theorem 4.1.1]{Murphy}) and by self-adjointness in the strong-$\ast$-topology.
		Therefore we obtain a decreasing strong-$\ast$ continuous   map $\mathbb{R} \ni t \mapsto Q_t$.  
	
	 For $s,t \in \sfG, s \geq t$ and any $m$ large such that $s,t \in \sfG_m$. We get that for $\xi \in \cH$, 
	\[
	    Q_{s,m}  V_{t,m} \xi =  P_{2^{-m}, s 2^m} U_{ 2^{-m} }^{t 2^m} \xi = U_{ 2^{-m} }^{s 2^m}  T_{2^{-m}}^{(s-t)2^m} \xi = V_{s,m}    T_{s-t} \xi.
	\]
	This shows that for $s,t \in \sfG, s\geq t$ we get that 
	$Q_s V_t J = V_s J T_{s-t}$. 
	 By strong continuity we get $Q_s V_t J = V_s J T_{s-t} $ for all $s \geq t \geq 0$. So by definition 
	 \begin{equation}\label{Eqn=HilbertRel}
	 Q_s J_t  = J_s T_{s-t} \qquad \textrm{ for all } s \geq t \geq 0.
	 \end{equation}
	  We finish the proof by quantization. 	Let $(V_t^\cK)_{t \in \mathbb{R}} = (\Id_{B(\ell_2(\mathbb{Z}))} \otimes V_t)_{t \in \mathbb{R}}$ be the orthogonal transformation group on $\cK_{\mathbb{R}} = \ell_2(\mathbb{Z}) \otimes \cH_{\mathbb{R}}$.  
	We set $\cN = \Gamma(\cK, (V_t^\cK)_{t \in \mathbb{R}}  )$ and $\cN_s = \Gamma( Q_s \cK), s \geq 0$. 
	 By second quantization we get a conditional expectation $\cE_s := \Gamma(Q_{s}): \cN \rightarrow \cN_s$ and a normal injective $\ast$-homomorphism $\pi_s = \Gamma(J_s)$. By \eqref{Eqn=HilbertRel} they satisfy
	 \[
	   \cE_s \circ \pi_t = \pi_s \circ \Psi_{s-t}, \qquad 0 \leq t \leq s.  
	 \]
	 It is clear from Remark \ref{Rmk=GaussianModular} that this dilation is modular. 
	 
	 \vspace{0.3cm}

\noindent {\it Step 3: A.u. continuity.} Suppose that $\xi_i$ is a net of vectors in $\cH$ converging in norm  to $\xi \in \cH$. Then we have for creation operators $a(\xi_t) \rightarrow a(\xi)$ in norm. By the Wick Theorem \ref{Thm=Wick} we see that $W(\xi_t) \rightarrow W(\xi)$ in norm. Now for $x = W(\xi) \in \mathcal{A}, \xi \in \cH$     the martingale   $m_t(x) = \cE_t (\pi_t(x)) = W( Q_t J_t \xi)$ is norm continuous.  

\end{proof}

 \begin{rmk}\label{Rmk=Nagy}
 Proposition \ref{Prop=MarkovDilate} could potentially  also be derived from a suitable analogue of \cite[Theorem 7.1]{Nagy}, provided that in this theorem one can keep track of the location of a specified real Hilbert subspace.  
\end{rmk}

\subsection*{Acknowledgements}
The author thanks A. Gonz\'alez-Perez and M. Junge for useful disucssions on BMO-multipliers.  The author thanks M. Veraar for pointing out   \cite{Nagy} and Remark \ref{Rmk=Nagy}.  The author thanks the referee for useful remarks leading to an improvement of the manuscript.

\end{document}